\newcommand{\reffont}{\rm} 
\LetLtxMacro\oldref\ref
\renewcommand{\ref}[1]{%
{\reffont\oldref{#1}}}
\newcommand{\customitem}[1]{%
\item[\rm#1]\protected@edef\@currentlabel{#1}%
}
\def\@captype{figure}
\newcounter{extralabel}[section]
\newtheoremstyle{fancyremark}{\topsep}{\topsep}{\rm}{}{\bfseries}{.}{ }{}
\newtheoremstyle{fancybreak}{\topsep}{\topsep}{\itshape}{}{\bfseries}{.}{\newline}{}
\theoremstyle{fancybreak}
\newtheorem{ittheorem}{Theorem}
\newtheorem{itlemma}{Lemma}
\newtheorem{itproposition}{Proposition}
\newtheorem{itcorollary}{Corollary}
\newtheorem{italgorithm}{Algorithm}
\newtheorem{itassumption}{Assumption}
\theoremstyle{plain}
\newtheorem{itcondition}{Condition}
\theoremstyle{fancyremark}
\newtheorem{itremark}{Remark}
\theoremstyle{definition}
\newtheorem{itdefinition}{Definition}
 \newenvironment{theorem}{\addtocounter{extralabel}{1}
 \begin{ittheorem}}{\end{ittheorem}}
 \newenvironment{lemma}{\addtocounter{extralabel}{1}
 \begin{itlemma}}{\end{itlemma}}
 \newenvironment{proposition}{\addtocounter{extralabel}{1}
 \begin{itproposition}}{\end{itproposition}}
 \newenvironment{definition}{\addtocounter{extralabel}{1}
 \begin{itdefinition}}{\end{itdefinition}}
 \newenvironment{corollary}{\addtocounter{extralabel}{1}
 \begin{itcorollary}}{\end{itcorollary}}
\newcommand\xqed[1]{
\leavevmode\unskip\penalty9999 \hbox{}\nobreak\hfill
\quad\hbox{#1}}
\newcommand\RemarkEnd{\xqed{$\blacksquare$}}
\newenvironment{remark}{\addtocounter{extralabel}{1}

\begin{itremark}}{\RemarkEnd\end{itremark}}
\newenvironment{condition}{\addtocounter{extralabel}{1}
\begin{itcondition}}{\end{itcondition}}
\newcommand{\N}{\mathbb{N}}
\newcommand{\NZ}{\mathbb{N}_0}
\DeclareTextFontCommand{\alwaysitalic}{\normalfont\itshape}
\newcommand{\Conf}{\text{\alwaysitalic{Conf}}}
\newcommand{\UConf}{U_{\Conf_H}}
\newcommand{\sss}{\scriptscriptstyle}
\newcommand{\TV}{\sss \mathrm{TV}}
\newcommand{\vep}{\varepsilon}
\newcommand{\tmixstat}{t_{\mathrm{mix}}^{\mathrm{stat}}}
\newcommand{\eqn}[1]{\begin{equation} #1 \end{equation}}
\newcommand{\eqan}[1]{\begin{align} #1 \end{align}}
\newcommand{\nn}{\nonumber}
\newcommand{\expec}{\mathbb{E}}
\newcommand{\prob}{\mathbb{P}}
\newcommand{\sizeH}{|H|}
\newcommand{\dmax}{d_\mathrm{max}}
\newcommand{\whp}{\mathrm{whp}}
\newcommand{\cstat}{c_{n}^{\mathrm{stat}}}
\renewcommand{\pmod}{\mathbb{P}^{\mathrm{mod}}}
\newcommand{\pcouple}{\mathbb{P}^{\mathrm{couple}}}
\newcommand{\D}{\mathcal{D}}
\newcommand{\Dstat}{\mathcal{D}^{\mathrm{stat}}}
\newcommand{\Ddyn}{\mathcal{D}^{\mathrm{dyn}}}
\newcommand{\allx}{x_{[0,t-1]},\allowbreak \bar{x}_{[0,t-1]},\allowbreak \hat{x}_{[r]},\allowbreak \tilde{x}_{[r]}}
\newcommand{\Tallx}{T,\allowbreak \allx}
\newcommand{\eventDSA}{\mathsf{DSA}(\Tallx)}
\newcommand{\eventDSAy}{\mathsf{DSA}(T, y_{[0,t-1]}, \bar{y}_{[0,t-1]}, \hat{y}_{[r]}, \tilde{y}_{[r]})}
\newcommand{\ally}{y_{[0,t-1]}, \bar{y}_{[0,t-1]}, \hat{y}_{[r]}, \tilde{y}_{[r]}}
\newcommand{\eventSA}[1]{\mathsf{SA}(#1)}
\newcommand{\Hdeg}{\deg_H}
\newcommand{\localset}[2]{\mathrm{Local}_{#1}(#2) }
\newcommand{\nearset}[3]{\mathrm{Near}_{#1,#3}(#2) }
\newcommand{\maxHdeg}{(\log{n})^{2+\varepsilon}}
\renewcommand{\theenumii}{(\alph{enumii})}
\renewcommand{\labelenumii}{\theenumii}
\renewcommand\p@enumiii{\theenumi\theenumii}
\renewcommand\p@enumii{\theenumi}
\title{Linking the mixing times of random walks\\ 
on static and dynamic random graphs}
\author{

Luca Avena
\footnotemark[1]
\\

Hakan G\"{u}lda\c{s}
\footnotemark[1]
\\

Remco van der Hofstad
\footnotemark[2]
\\

Frank den Hollander
\footnotemark[1]
\\

Oliver Nagy
\footnotemark[1]
}
\date{February 7, 2021}
\begin{document}

\maketitle

\begin{abstract}
This paper considers non-backtracking random walks on random graphs generated according to the configuration model. The quantity of interest is the scaling of the mixing time of the random walk as the number of vertices of the random graph tends to infinity. Subject to mild general conditions, we link two mixing times: one for a \emph{static} version of the random graph, the other for a class of \emph{dynamic} versions of the random graph in which the edges are randomly \emph{rewired} but the degrees are preserved. The link is provided by the probability that the random walk has not yet stepped along a previously rewired edge. We use this link to compute the scaling of the mixing time for three specific classes of random rewirings. Depending on the speed and the range of the rewiring relative to the current location of the random walk, the mixing time may exhibit no cut-off, one-sided cut-off or two-sided cut-off, a trichotomy that was also found in earlier work. Interestingly, for a class of dynamics that are `mesoscopic', i.e., non-local and non-global, we find new behaviour with six subregimes. Proofs are built on a new and flexible coupling scheme, in combination with sharp estimates on the degrees encountered by the random walk in the static and the dynamic version of the random graph. Some of these estimates require sharp control on possible short-cuts in the graph between the edges that are traversed by the random walk. 

\vspace{0.5cm}
\noindent
\small
\emph{Key words.}
Configuration model, random rewiring, random walk, mixing time, cutoff.\\
\emph{MSC2010.} 
05C81, 
37A25, 
60K37, 
82C27. 
\\
\noindent
\emph{Acknowledgment.}
The work in this paper was supported by the Netherlands Organisation for Scientific Research (NWO) through Gravitation-grant NETWORKS-024.002.003. 
\normalsize
\end{abstract}

\newpage 


\section{Introduction}
\label{sec:intro}

\paragraph{Target.}
In the present paper we study the mixing time of a \emph{non-backtracking} random walk on a \emph{dynamically rewired random graph} initially drawn according to the \emph{configuration model}. Our core result is a link between the mixing times on the static and the dynamic random graph. Subject to mild conditions on the degrees of the vertices and the dynamics of the underlying graph, we show that, up to an error that vanishes as the number of vertices tends to infinity, the total variation distance to the stationary distribution on the \emph{dynamic} random graph is given by the total variation distance on the \emph{static} random graph multiplied by the probability that the random walk has not yet stepped along a previously rewired edge. Phrased in symbols, we show that
\begin{equation}
\label{mixlink}
\Ddyn_{x,\xi}(t) = \prob_{x,\xi}(\tau > t)\,\Dstat_{x,\xi}(t) +o_{\sss\prob}(1),
\end{equation}
where $x$ is the starting vertex of the random walk, $\xi$ is the starting configuration of the random graph, $\Ddyn_{x,\xi}(t)$ and $\Dstat_{x,\xi}(t)$ are the total variation distance between the distribution of the random walk at time $t$ and the stationary distribution for the dynamic, respectively, the static random graph, and $\tau$ is the first time the random walk crosses a rewired edge (see Theorem~\ref{thm:main} below for a precise statement). The latter acts as a randomised stopping time and plays a central role in our analysis. 

\paragraph{Innovative aspects.}
Our goal is to build a \emph{general framework} that can be applied to a large class of random graph dynamics for which the degree structure is preserved, including dynamics that depend on the \emph{position} of the random walk and dynamics that are \emph{non-Markovian}. To do so we use a coupling that works well for non-backtracking random walks. We show that \eqref{mixlink} holds under \emph{general conditions} that appear to be the weakest possible, and that can be verified in specific examples. In particular, we use \eqref{mixlink} to identify the scaling of the random walk mixing time for three choices of the dynamics where the rewiring is done in a certain range around the current position of the random walk. Depending on the speed and the range of the rewiring, the mixing time may exhibit no cut-off, one-sided cut-off or two-sided cut-off, a trichotomy that was also found in earlier work (see Section~\ref{sec:literature} for an extensive literature overview). Interestingly, for a class of dynamics that are non-local \emph{and} non-global, we find new behaviour with six subregimes (see Fig.~\ref{fig:tvdisttrichotomy-ntg} below), two of which include \emph{critical crossover times} where the mixing profile \emph{changes shape}.


\subsection{Model and notation}
\label{sec:intro:modelnotation}

It is convenient to describe our model in terms of \emph{half-edges}. Write $V$ to denote the vertex set of the graph, $|V| \eqqcolon n$ the number of vertices, and $\deg(v)$ the degree of vertex $v\in V$. To each vertex $v\in V$ we associate $\deg(v)$ half-edges, forming the set $H_v \coloneqq \{h_i\}_{i=1}^{\deg(v)}$. The set of all half-edges is $H \coloneqq \bigcup_{v \in V} H_v$. We denote the vertex $v$ for which $h\in H_v$ by $v(h) \in V$. If $x,y\in H_v,\ x\neq y$, then we write $x\sim y$ and say that $x$ and $y$ are \emph{siblings} of each other. Using $|X|$ to denote the cardinality of set $X$, we define the degree of a half-edge $h\in H$ as
\begin{equation}
\label{degdef3}
\Hdeg(h) \coloneqq |\{h_i \in H_{v(h)}\colon\, h_i \sim h\}| = \deg(v(h))-1.
\end{equation}

We identify an edge with a pair of half-edges. A \emph{configuration} is a pairing $\xi$ of half-edges with the property that $\xi(h) \neq h$ and $\xi(\xi(h)) = h$ for all $h\in H$. The set of all configurations on $H$ is denoted by $\Conf_H$, and the uniform distribution on $\Conf_H$ is denoted by $\UConf$. With a slight abuse of notation, we will use the same symbol $\xi$ to denote the set of pairs of half-edges forming $\xi$, so $\{x,y\}\in\xi$ means that $\xi(x) = y$ and $\xi(y) = x$. Note that $\xi$ may represent a multi-graph, possibly with self-loops. A random graph corresponding to a configuration where the half-edges are paired uniformly at random is called the \emph{configuration model} (see \cite{BOL1980}, \cite[Chapter 7]{vdH2016}). The quantities above depend on $n$, but this dependence will be mostly suppressed from the notation.

We study Markov chains $\{(X_t,C_t)\}_{t \in \NZ}$, where $X_t\in H$ denotes the \emph{non-backtracking} random walk component and $C_t\in\Conf_H$ corresponds to the \emph{evolution of the underlying graph}. The evolution is chosen in such a way that it does not change the degree sequence of the graph (and consequently does not change the stationary distribution of the random walk on the graph), and can be visualised by breaking up pairs of half-edges and pairing them again, both according to prescribed rules. At each time $t\in\N$, we first update the configuration and then let the walk move.

\begin{remark}[Notation]
Note that $(X_{t-1},C_{t-1})$ is the state just before the transition at time $t$, while $(X_t,C_t)$ is the state just after the transition at time $t$.
\end{remark}

Our main result concerns the total variation distance between the distribution of the random walk component and the stationary uniform distribution on the set of half-edges $U_H$, defined as 
\begin{equation}
\Ddyn_{x,\xi}(t) \coloneqq \|\prob_{x,\xi}(X_t\in\cdot) - U_H(\cdot)\|_{\TV}.
\end{equation}

Here, the total variation distance between two probability measures $\mu$ and $\nu$ on the same finite state space $S$ is defined by
\begin{equation}
\|\mu-\nu\|_{{\sss \mathrm{TV}}} \coloneqq \sum_{x\in S}|\mu(x)-\nu(x)| 
= \sum_{x\in S}[\mu(x)-\nu(x)]_+ = \sup_{A\subseteq S}[\mu(A)-\nu(A)],
\end{equation}
We are concerned with the behaviour of $\D_{x,\xi}(t)$ for \enquote{typical} choices of $x$ and $\xi$. We formalise the notion of typicality in the following definition:

\begin{definition}[With high probability]
\label{def:whp3}
Recall that $n = |V|$ and let $\mu \coloneqq U_H\times\UConf$. A~statement that depends on the initial half-edge $x$ and the initial configuration $\xi$ is said to hold \emph{with high probability}, abbreviated $\whp$, if the $\mu$-measure of the set of pairs $(x,\xi)$ for which the statement holds tends to $1$ as $n\to\infty$.
\end{definition}

Another important object is the first time the random walk steps along a previously rewired edge:

\begin{definition}[Randomized stopping time]\label{def:stopping}
Let $R_t$ be the set of edges being rewired at time~$t$, $R_{\leq t} \coloneqq \bigcup_{s=1}^t R_s$, and let $I_t$ denote the indicator of the event that the random walk steps along a previously rewired edge at time $t$, i.e., $I_t = 1$ when $X_{t-1} \in R_{\leq t}$ and $I_t = 0$ otherwise. We define the \emph{randomized stopping time} $\tau$ as
\begin{equation}
\tau \coloneqq \min\{t\in\N\colon\, I_t = 1\}.
\end{equation}
\end{definition}

Note that, since rewiring happens before the random walk steps, $X_{t-1}$ is the position of the random walk just before it steps over an edge that is rewired at time $t$.

For $x\in H$ and $\xi\in\Conf_H$, we denote by $\Dstat_{x,\xi}(t)$ the total variation distance of the random walk on the static random graph to the stationary uniform distribution $U_H$ at time $t$, and by $\prob_{x,\xi}(\tau > t)$ the probability that $\tau > t$, both given the starting state $(x, \xi)$. 


\subsection{Mixing for general rewiring mechanisms}\label{sec:intro:mainthm}

The main theorem of this paper is the following statement linking the total variation distance to the stationary distribution for the static and the dynamic version of the random graph:

\begin{theorem}[Link between static and dynamic mixing]
\label{thm:main}
Suppose that $t = O(\log n)$. Subject to Conditions~\ref{cond-regularity-graph} and \ref{cond-regularity-dynamics} below, the following holds whp in $x$ and $\xi$:
\begin{equation}
\label{mainthm3equ}
\Ddyn_{x,\xi}\big(t\,\big) = \prob_{x,\xi}(\tau > t)\,\Dstat_{x,\xi}(t) +o_{\sss\prob}(1).
\end{equation}
\end{theorem}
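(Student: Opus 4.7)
The plan is a coupling-and-decomposition argument. First, I would build a coupling of the dynamic walk $X^{\mathrm{dyn}}$ with the static walk $X^{\mathrm{stat}}$ (the non-backtracking random walk on the frozen configuration $\xi$) on a common probability space: both start from $x$, both use the same uniform choice among available siblings at every step, and the dynamic process additionally sees the rewiring variables that generate $R_{\le t}$. The only moves on which the two can disagree are steps along an edge previously rewired, so the trajectories are identical on $\{\tau>t\}$ and
\[
\prob_{x,\xi}\bigl(X^{\mathrm{dyn}}_t\in A,\,\tau>t\bigr)=\prob_{x,\xi}\bigl(X^{\mathrm{stat}}_t\in A,\,\tau>t\bigr)\qquad\text{for every }A\subseteq H.
\]

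Using this identity I would split
\begin{align*}
\prob_{x,\xi}(X^{\mathrm{dyn}}_t\in A)-U_H(A)
&=\bigl[\prob_{x,\xi}(X^{\mathrm{stat}}_t\in A,\,\tau>t)-\prob_{x,\xi}(\tau>t)\,U_H(A)\bigr] \\
&\quad+\bigl[\prob_{x,\xi}(X^{\mathrm{dyn}}_t\in A,\,\tau\le t)-\prob_{x,\xi}(\tau\le t)\,U_H(A)\bigr],
\end{align*}
and reduce the theorem to two approximation estimates, uniform in $A$: a \emph{post-$\tau$ equilibration} bound showing that the second bracket is $o_{\sss\prob}(1)$ in TV, and an \emph{asymptotic independence} bound showing that $\prob_{x,\xi}(X^{\mathrm{stat}}_t\in A,\,\tau>t)$ may be replaced by $\prob_{x,\xi}(\tau>t)\,\prob_{x,\xi}(X^{\mathrm{stat}}_t\in A)$ with error $o_{\sss\prob}(1)$ in TV. Substituting both into the decomposition and taking $\sup_A$ then delivers
\[
\Ddyn_{x,\xi}(t)=\prob_{x,\xi}(\tau>t)\,\Dstat_{x,\xi}(t)+o_{\sss\prob}(1),
\]
because the additive $o_{\sss\prob}(1)$ error is uniform in $A$ and scaling by the scalar $\prob_{x,\xi}(\tau>t)\in[0,1]$ commutes with the TV norm.

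For the equilibration step, the key observation is that at time $\tau$ the walker crosses a freshly rewired half-edge which, under Condition~\ref{cond-regularity-dynamics}, is sampled essentially uniformly from a set of size $n-o(n)$; hence $X_\tau$ is $o(1)$-close to $U_H$ in TV. Because the rewiring is degree-preserving, $U_H$ is stationary for the joint walk--configuration dynamics, so this uniformity propagates to every $t\ge\tau$ after integrating over the value of $\tau$. The asymptotic independence step is what I expect to be the principal obstacle: both $\{X^{\mathrm{stat}}_t\in A\}$ and $\{\tau>t\}$ are functionals of the same depth-$t=O(\log n)$ exploration of $\xi$ around $x$, and decorrelating them requires showing that, conditionally on the walk's endpoint, the traversed edge-set behaves like a uniformly random self-avoiding path. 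This is precisely where Condition~\ref{cond-regularity-graph} and the sharp control on \emph{short-cuts} (cycles among visited half-edges, and coincidences between visited edges and rewired edges) highlighted in the abstract must do the heavy lifting, with bounds strong enough to hold whp in the initial pair $(x,\xi)$.
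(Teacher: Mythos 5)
Your overall architecture (couple the dynamic walk to a walk on the frozen graph, split according to whether $\tau$ has occurred, show the post-$\tau$ part is nearly uniform and the pre-$\tau$ part nearly factorises) is the same skeleton as the paper's, but the way you implement the post-$\tau$ part contains a genuine gap. You argue: at time $\tau$ the crossed half-edge is close to uniform by Condition~\ref{cond-regularity-dynamics}\ref{cond-regularity-dynamics-D2}, and since the rewiring is degree-preserving and $U_H$ is stationary, "uniformity propagates to every $t\ge\tau$". Two problems. First, \ref{cond-regularity-dynamics-D2} gives approximate uniformity of $C_\tau(X_{\tau-1})$ only \emph{conditionally on a (good) dynamically self-avoiding history}; the typicality of such histories is not an assumption of Theorem~\ref{thm:main} but must be proved, and that is exactly the active-set/annealing estimate (sets $A_s$ of size $\le 3s\,\dmax$, union bound over $t=O(\log n)$ steps, Conditions~\ref{cond-regularity-graph}\ref{cond-regularity-graph-R1}--\ref{cond-regularity-graph-R2}) that constitutes Lemma~\ref{mainlemma}; your plan has no substitute for it, and the per-step $o(1/\log n)$ errors in \ref{cond-regularity-dynamics-D1}--\ref{cond-regularity-dynamics-D2} must be accumulated over $O(\log n)$ steps somewhere. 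Second, and more seriously, the propagation claim is false as stated for walk-dependent dynamics (local- and near-to-global): knowing only that the \emph{marginal} of $X_\tau$ is nearly uniform does not let you push uniformity forward, because the conditional law of the configuration at time $\tau$ given $X_\tau$, the past, and the event $\{\tau\le t\}$ is not $\UConf$ and is entangled with the walk (the walk may later re-cross rewired or previously traversed edges, and future rewirings depend on its position). Stationarity of $U_H\times\UConf$ propagates the \emph{product} measure, which you do not have. The paper's modified random walk is precisely the device that removes this obstruction: after a uniform jump, the walk applies the doubly stochastic kernels $P_\xi$ of the \emph{static} graph, so $\pmod_{x,\xi}(Y_t\in\cdot\mid\sigma\le t)=U_H(\cdot)$ holds \emph{exactly}, and all the dynamical entanglement is paid for once, in the coupling lemma.

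Your "asymptotic independence" step is in spirit the paper's use of Condition~\ref{cond-regularity-dynamics}\ref{cond-regularity-dynamics-D1}, but note two points. At the level of generality of Theorem~\ref{thm:main} you cannot invoke short-cut estimates on the configuration model: those belong to \emph{verifying} the conditions for near-to-global rewiring (Section~\ref{sec:nearrrw}), whereas the theorem must be proved from \ref{cond-regularity-dynamics-D1}--\ref{cond-regularity-dynamics-D3} alone. And to convert \ref{cond-regularity-dynamics-D1} (a statement about conditional crossing probabilities given DSA histories) into your uniform-in-$A$ factorisation of $\prob_{x,\xi}(X^{\mathrm{stat}}_t\in A,\,\tau>t)$, you effectively have to construct a crossing-time process whose law does not depend on the fine trajectory and couple $I_t$ to it maximally --- which is exactly the jump sequence \eqref{jumpdist} of the modified walk. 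So the route you propose, once the gaps are filled, reconstructs the paper's proof rather than bypassing it; as written, the equilibration-after-$\tau$ step does not go through.
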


\noindent
Conditions~\ref{cond-regularity-graph} and \ref{cond-regularity-dynamics} are regularity conditions. The former is rather standard in the literature and ensures that the underlying graph is sparse and that the non-backtracking random walk is well-defined. The latter, representing one of the novelties of this article, ensures that the non-backtracking random walk is well-mixed when it steps along a previously rewired edge and the time at which this happens does not depend on the fine details of its past trajectory. 

The proof of Theorem~\ref{thm:main} is based on a \emph{coupling argument} in which the random walk on the dynamically rewired random graph is coupled to a modified random walk on the static random graph that at certain random times makes uniform jumps. These jumps correspond to the times at which the random walk steps along a previously rewired edge. The coupling must be good enough to beat the errors in the comparison. A key ingredient of the coupling is that the non-backtracking random walk on the configuration model is $\whp$ \emph{self-avoiding} on the scale of the mixing time.

Note that while $\{(X_t,C_t)\}_{t\in\N}$ is Markov, the marginal $\{X_t\}_{t\in\N}$ need not be, even though the stationary distribution of the latter is still the uniform distribution. 


\subsection{Application to specific rewiring mechanisms}

We next consider three choices of random rewiring, referred to as \emph{local-to-global}, \emph{near-to-global} and \emph{global-to-global}, controlled by two parameters: (1) $r_n$, representing the radius of the ball around the current location of the random walk in which edges are allowed to be rewired with an edge that is drawn uniformly at random from the set of all edges; (2) $\alpha_n$, representing the probability that an edge in this ball is rewired per unit of time. By rewiring we mean breaking up two pairs of chosen edges into four half-edges and tying these up at random (for details, see Section \ref{sec:ex}).

At every unit of time a \emph{subset} of the edges is rewired. The rewiring of each edge is always with an edge that is chosen \emph{uniformly at random} from the set of \emph{all} edges. For the subset of edges that is rewired we consider three choices: 
\begin{itemize}
\item\textbf{Local-to-global ($r_n = 1$):}\\
The edge that corresponds to the current position of the random walk has probability $\alpha_n$ to be rewired. 
\item\textbf{Near-to-global ($1<r_n<r_{\mathrm{max}}$):}\\
All the edges in the $r_n$-ball around the current position of the random walk have probability $\alpha_n$ to be rewired, independently of each other.
\item \textbf{Global-to-global ($r_n=r_{\mathrm{max}}$):}\\
All the edges have probability $\alpha_n$  to be rewired, independently of each other. 
\end{itemize}
Here, $r_{\mathrm{max}}$ is the maximal radius (see \eqref{CM:rad} below), provided that the graph is connected (which happens $\whp$ under the conditions that will be stated below). 

Global-to-global rewiring was considered in \cite{AGHH2018} and \cite{AGHH20182}, while local-to-global rewiring was considered in an unpublished chapter of the doctorate thesis \cite{HG2019}. In the present paper, however, we prove results under \emph{weaker assumptions}. For an overview of previous work, see Section~\ref{sec:literature}. Near-to-global rewiring is new and turns out to hold surprises:

\begin{theorem}[Scaling of cross-rewired time]
\label{thm:notmain}
Suppose that $\lim_{n\to\infty}\alpha_n = 0$ and $t=O(\log n)$. Subject to Conditions~\ref{cond-regularity-graph}\ref{cond-regularity-graph-R1} and \ref{cond-regularity-graph-R3} below, the following hold $\whp$ in $x$ and $\xi$:
\begin{enumerate}
\customitem{(A)}\label{thm:notmain-A}
For local-to-global rewiring defined in Section~\ref{sec:locrrw}:
\begin{equation}
\label{ltg}
\prob_{x,\xi}(\tau > t) = o_{\sss\prob}(1) + \mathrm{e}^{-c}, \quad c \in (0,\infty), \qquad t = \lfloor c/\alpha_n\rfloor.
\end{equation}
\customitem{(B)}\label{thm:notmain-B}
For near-to-global rewiring defined in Section~\ref{sec:nearrrw}, subject to Condition~\ref{cond:secmom} below:
\begin{enumerate}
\item
If $\lim_{n\to\infty}\alpha_n r_n^2 = \infty$, then
\begin{equation}
\label{ntg1}
\prob_{x,\xi}(\tau > t) = o_{\sss\prob}(1) + \mathrm{e}^{-c^2/2}, \quad c \in (0,\infty), \qquad t = \lfloor c/\sqrt{\alpha_n}\rfloor.
\end{equation}
\item
If $\lim_{n\to\infty}\alpha_n r_n^2 = \beta \in (0,\infty)$, then
\begin{equation}
\label{ntg2}
\prob_{x,\xi}(\tau > t) = o_{\sss\prob}(1) + \left\{\begin{array}{ll}
\hspace{-.1cm} \mathrm{e}^{-\beta c^2/2}, &c \in (0,1],\\
\hspace{-.1cm} \mathrm{e}^{-\beta(2c-1)/2}, &c \in (1,\infty),
\end{array}
\right.
\quad t = \lfloor c\,r_n\rfloor,
\end{equation}
\item
If $\lim_{n\to\infty}\alpha_n r_n^2 = 0$, then
\begin{equation}
\label{ntg3}
\prob_{x,\xi}(\tau > t) = o_{\sss\prob}(1) + \mathrm{e}^{-c}, \quad c \in (0,\infty), \qquad t = \lfloor c/\alpha_n r_n \rfloor.
\end{equation}
\end{enumerate}
\customitem{(C)}\label{thm:notmain-C}
For global-to-global rewiring defined in Section~\ref{sec:globrrw}: 
\begin{equation}
\label{gtg}
\prob_{x,\xi}(\tau > t) = o_{\sss\prob}(1) + \mathrm{e}^{-c^2/2}, \quad c \in (0,\infty), \qquad t = \lfloor c/\sqrt{\alpha_n}\rfloor.
\end{equation}
\end{enumerate}
\end{theorem}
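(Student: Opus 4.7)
The plan is to reduce $\mathbb{P}_{x,\xi}(\tau > t)$ to an explicit product over the half-edges the walk traverses and then specialize to each of the three rewiring mechanisms. The key structural fact, to be established first, is that on the time scale $t = O(\log n)$ the non-backtracking walk is $\whp$ self-avoiding and locally tree-like: the half-edges $X_0, \ldots, X_{t-1}$ are pairwise distinct (as are the edges they traverse), and $\dist(X_{u-1}, X_{s-1}) = s - u$ for all $1 \leq u \leq s \leq t$. This follows from standard first- and second-moment arguments for the configuration model under Condition~\ref{cond-regularity-graph}, supplemented by Condition~\ref{cond:secmom} in case~(B) to bound the sizes of $r_n$-balls. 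Since $\alpha_n \to 0$, the total number of edges rewired during $[0,t]$ is $o(n)$ in each regime, so the rewirings themselves do not create shortcuts in the region explored by the walk ($\whp$).

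Conditionally on the walk's trajectory, rewiring attempts on distinct edges are independent Bernoulli$(\alpha_n)$ trials per time step. The event $\{\tau > t\}$ says that for every $s \in \{1,\ldots,t\}$ the edge at $X_{s-1}$ escapes rewiring at every prior time $u \leq s$ at which it lies in the rewiring range of $X_{u-1}$. Setting
\[
E_s \coloneqq \bigl|\{u \in \{1,\ldots,s\} : \dist(X_{u-1},X_{s-1}) \leq r_n\}\bigr|,
\]
self-avoidance together with independence across distinct edges gives
\[
\mathbb{P}_{x,\xi}(\tau > t) = (1-\alpha_n)^{\sum_{s=1}^t E_s} + o_{\sss\prob}(1),
\]
and the distance identity yields $E_s = 1$ in case (A) (one opportunity per edge, at the step itself), $E_s = s$ in case (C) (every past time works), and $E_s = \min(s, r_n+1)$ in case (B) (only times $u \in [\max(1,s-r_n), s]$ reach $X_{s-1}$).

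Using $\log(1-\alpha_n) = -\alpha_n(1+o(1))$, the announced limits follow by a short calculation in each regime. One finds $\sum_s E_s = t$ in (A); $\sum_s E_s \sim t^2/2$ in (C) and in (B)(i) where the assumption $\alpha_n r_n^2 \to \infty$ forces $t \ll r_n$; $\sum_s E_s \sim r_n t$ in (B)(iii) where $\alpha_n r_n^2 \to 0$ forces $t \gg r_n$; and in the crossover regime (B)(ii) one splits into $c \leq 1$ (so $t \leq r_n$ and $\alpha_n t^2/2 \to \beta c^2/2$) and $c > 1$ (so $t > r_n$ and $\alpha_n \sum_s E_s \sim \alpha_n r_n(t-r_n/2) \to \beta(2c-1)/2$), matching \eqref{ntg2} exactly.

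The main obstacle is the tree-like structural statement in case (B), where rewirings can a priori create shortcuts inside the very ball on which distances are being measured. A union bound over pairs $(u,s)$ with $u \leq s \leq t$ is needed to exclude three bad events: (i) the walk revisits a half-edge; (ii) the walk encounters a short cycle of length $\leq 2r_n$ in the original graph; (iii) a rewiring during $[1,t]$ creates a new edge incident to a later walk position. Each is controlled by standard local-weak-limit arguments for the configuration model, with Condition~\ref{cond:secmom} providing the crucial bound on the $r_n$-ball size. Cases (A) and (C) are easier and reduce to variants of arguments already present in \cite{HG2019} and \cite{AGHH2018} respectively.
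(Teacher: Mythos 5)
Your skeleton coincides with the paper's: condition on the self-avoiding event (which holds $\whp$ uniformly in $t=O(\log n)$), write $\prob_{x,\xi}(\tau>t)$ as $(1-\alpha_n)$ raised to the number of rewiring opportunities along the future path, and compute regime by regime. Your per-edge count $E_s=\min(s,r_n+1)$ is the paper's per-time count with the double sum reordered, and your exponents $t$, $\sim t^2/2$, $\sim r_n(t-r_n)+r_n^2/2$ reproduce \eqref{ltg}, \eqref{gtg} and \eqref{ntg1}--\eqref{ntg3} exactly; parts (A) and (C) are fine as sketched.

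The gap is in the one step of part (B) that genuinely requires work: justifying $E_s=\min(s,r_n+1)$ when the rewiring range is measured in the \emph{evolving} configuration. First, your bad events do not capture the relevant failures. What must be excluded is a path of $k\le r_n$ edges, disjoint from the walk's trajectory, joining $X_{u-1}$ to $X_{s-1}$ for some pair with $s-u>r_n$, in the configuration present at time $u$; together with the walk segment this is a cycle of length up to $t+r_n=O(\log n)$, not $\le 2r_n$, and a rewired edge can lie in the \emph{interior} of such a connecting path without being incident to any walk position, so neither your event (ii) nor your event (iii) covers it. Second, the heuristic that ``only $o(n)$ edges are rewired, hence rewirings create no shortcuts'' is not a proof when $r_n=\Theta(\log n)$: there the near-sets have polynomial size $n^{\rho/\rho_{\max}}$, polynomially many edges are re-paired per step, and one must actually bound the expected number of connecting paths. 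The paper's resolution (Lemma~\ref{lem-shortcuts}) rests on the observation that the uniform pairing is stationary under each of these dynamics, so that conditionally on the walk's history the remaining half-edges are uniformly paired at \emph{every} time; this licenses the static configuration-model estimate $\Hdeg(h_i)\Hdeg(h_j)\,\widehat{\nu}_n^{\,k-1}/\widehat{\ell_n}$ for disjoint short-cuts of length $k$, whose sum over $k\le r_n\le(1-\varepsilon)\rho_{\max}\log n$ is $O\big((\max_{i\le t}\Hdeg(h_i))^2\,n^{-\varepsilon/2}\big)$, and it is closed by the $\whp$ bound $\max_{i\le t}\Hdeg(h_i)\le t^2$, which is where Condition~\ref{cond:secmom} (bounded size-biased mean) actually enters. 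Your proposal needs this stationarity input (or some substitute that handles the time-dependent pairing) together with the degree truncation along the path; as written, the union bound over pairs $(u,s)$ does not close.
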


\noindent
Note that the tail probability $\prob_{x,\xi}(\tau > t)$ exhibits a \emph{trichotomy} for near-to-global rewiring, with an additional crossover at time $t=r_n$ when $\lim_{n\to\infty}\alpha_n r_n^2 = \beta \in (0,\infty)$. 

Condition~\ref{cond:secmom} says that the empirical degree distribution converges to a limit as $n\to\infty$, and so do it first and second moments. It implies that $\whp$ the radius (i.e., the typical distance between vertices) of the random graph is
\begin{equation}
\label{CM:rad}
r_{\mathrm{max}} = [1+o_{\sss\prob}(1)]\,\frac{\log n}{\log \nu},
\end{equation}
where $\nu$ is the size-biased mean of the limiting empirical degree distribution \cite[Theorem 7.1]{vdH2018}, which is assumed to satisfy $\nu \in (1,\infty)$. Thus, for near-to-global rewiring we can only choose 
\begin{equation}
\label{rhomax}
r_n = [\rho+o(1)]\,\log n, \quad \rho \in [0,\rho_\mathrm{max}), \qquad \rho_\mathrm{max} = \frac{1}{\log \nu}.
\end{equation}

Condition~\ref{cond:secmom} is needed for Theorem~\ref{thm:notmain}\ref{thm:notmain-B} only. The fact that it is not needed for Theorem~\ref{thm:notmain}\ref{thm:notmain-C} weakens the conditions in \cite{AGHH2018, AGHH20182}. We expect Theorem \ref{thm:notmain}\ref{thm:notmain-B} to fail without Condition~\ref{cond:secmom}. Namely, when the degree distribution has infinite variance, graph distances are of smaller order than $\log n$, and in fact are of order $\log\log n$ under an appropriate power-law assumption on the empirical degree distribution \cite{CarGarHof19, vdH2018, HHZ2007, HofKom17}. In the latter setting, for $r_n=c \log n$ we expect near-to-global rewiring to behave similarly as global-to-global rewiring. 

In order to exploit Theorem~\ref{thm:main}, we need to also control $\Dstat_{x,\xi}(t)$. For this we use the following result from \cite{BHS2017}, which requires \emph{additional regularity conditions} stronger than Condition~\ref{cond-regularity-graph} (see Appendix~\ref{appB} for further details): 

\begin{theorem}[Scaling of static mixing time]     
\label{thm:stat}
Subject to \eqref{c*def} and Condition~\ref{cond-regularity-graph2}, the following holds whp in $x$ and $\xi$:
\begin{equation}
\Dstat_{x,\xi}(t) = \left\{\begin{array}{lll}
1-o_{\sss\prob}(1), 
&\text{if}\quad t = \lfloor c \log n\rfloor,
&c<c_*,\\
o_{\sss\prob}(1),
&\text{if}\quad t = \lfloor c \log n\rfloor,
&c>c_*,
\end{array}
\right.
\end{equation}
where $c_* \in (0,\infty)$ is the constant defined in \eqref{c*def}.
\end{theorem}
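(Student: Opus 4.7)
The plan is to invoke the cutoff theorem of Ben-Hamou and Salez \cite{BHS2017} essentially off the shelf: Theorem~\ref{thm:stat} is their main result, specialised to the non-backtracking random walk on the configuration model. The role of the current proof is therefore to verify that Condition~\ref{cond-regularity-graph2} (together with the definition of $c_*$ in \eqref{c*def}) matches the hypotheses of their theorem, and to record any simple translations of notation. I would devote the bulk of the argument to justifying the quoted statement in a way that is self-contained modulo the key estimates of \cite{BHS2017}.

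First I would set up the standard \emph{local tree approximation}. For $t = O(\log n)$ with a small enough constant, the $t$-neighbourhood in $\xi$ of a uniformly chosen half-edge $x$ is $\whp$ a tree whose law is well-approximated by the first $t$ generations of a size-biased Galton--Watson branching process with offspring mean $\nu \in (1,\infty)$. This is a second-moment computation on the number of short cycles and follows from Condition~\ref{cond-regularity-graph2}, in particular from the degree regularity and the sparsity of the configuration model. Up to the first time a cycle is encountered, the non-backtracking walk $(X_t)$ can be coupled with a non-backtracking walk on this (random) tree.

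Second, I would invoke the \emph{entropic method}. On the tree, the one-step transition out of a half-edge $h$ is uniform on its $\Hdeg(\xi(h))$ siblings, so the law of $X_t$ is a random-environment Markov chain whose entropy grows linearly at the rate $H \in (0,\infty)$ determined by the size-biased offspring distribution; the cutoff constant is $c_* = 1/H$, which is precisely the constant in \eqref{c*def}. The upper bound in Theorem~\ref{thm:stat} then proceeds by comparing the law of $X_t$ at time $t = \lfloor c\log n\rfloor$ with $c > c_*$ to the uniform distribution $U_H$ via a typical-set argument: with overwhelming probability $X_t$ lies in a set of size $n^{1-o(1)}$ on which the walk density is $(1+o(1))/n$. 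For the lower bound with $c<c_*$, the walk at time $t$ is $\whp$ concentrated on a set of size $n^{Hc+o(1)} = n^{c/c_*+o(1)} \ll n$, which serves as a distinguishing event since $U_H$ assigns it negligible mass.

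The main obstacle is obtaining these statements \emph{uniformly whp in the pair $(x,\xi)$} rather than only in expectation over $\xi$. This requires two ingredients supplied by Condition~\ref{cond-regularity-graph2}: (i) a concentration bound for the number of short cycles in the ball of radius $t$ around $x$ in $\xi$, strong enough to hold simultaneously for all but a $o(1)$-fraction of starting half-edges; and (ii) concentration of the logarithm of the accumulated degree product along the trajectory around its mean $Ht$, which follows from a martingale/law of large numbers argument applied to the i.i.d.\ size-biased offspring counts in the coupled tree. Once these two concentration statements are in hand, the upper and lower bounds transfer from the annealed to the quenched setting, yielding Theorem~\ref{thm:stat} as stated. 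Full details, together with the precise matching of Condition~\ref{cond-regularity-graph2} to the hypotheses of \cite{BHS2017}, are deferred to Appendix~\ref{appB}.
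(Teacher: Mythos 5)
Your proposal takes essentially the same route as the paper: the paper simply records the Ben-Hamou--Salez result as Theorem~\ref{thm:scalstat} in Appendix~\ref{appB} (cutoff at $\tmixstat=[1+o_{\sss\prob}(1)]\,\cstat\log n$ with $1/\cstat$ the empirical mean of $\log\Hdeg$) and notes that assumption \eqref{c*def}, i.e.\ $\cstat\to c_*$, immediately converts this into the stated dichotomy at $t=\lfloor c\log n\rfloor$. Your additional sketch of the tree-approximation/entropic machinery merely recapitulates the internal proof of \cite{BHS2017}, which the paper does not redo, so the argument is correct and coincides with the paper's citation-based proof.
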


Combining Theorems~\ref{thm:main}--\ref{thm:stat} we end up with the following results:

\begin{corollary}[Scaling of dynamic mixing time for local-to-global rewiring]
\label{locrwmixing}
Consider the local-to-global rewiring defined in Section~\ref{sec:locrrw}. Suppose that $\lim_{n\to\infty}\alpha_n = 0$ and $t=O(\log n)$. Subject to Condition~\ref{cond-regularity-graph}\ref{cond-regularity-graph-R1}, Condition~\ref{cond-regularity-graph2} and  \eqref{c*def}, the following hold whp in $x$ and $\xi$:
\begin{itemize}
\customitem{(1)}\label{locrwmixing-1}
If $\lim_{n\to\infty} \alpha_n\log n = \infty$, then
\begin{equation}
\label{equ:supercritical}
\Ddyn_{x,\xi}\big(\lfloor c/\alpha_n\rfloor\big) = o_{\sss\prob}(1) + \mathrm{e}^{-c}, \quad c \in [0,\infty).
\end{equation}
\customitem{(2)}\label{locrwmixing-2}
If $\lim_{n\to\infty} \alpha_n\log n = \gamma \in (0,\infty)$, then
\begin{equation}
\label{equ:critical3}
\Ddyn_{x,\xi}\big(\lfloor c \log n\rfloor\big) =o_{\sss\prob}(1) + \left\{\begin{array}{ll}
\mathrm{e}^{-\gamma c}, &c \in [0,c_*),\\
0, &c \in (c_*,\infty).
\end{array}
\right.
\end{equation}
\customitem{(3)}\label{locrwmixing-3}
If $\lim_{n\to\infty} \alpha_n\log n = 0$, then
\begin{equation}
\label{equ:subcritical3}
\Ddyn_{x,\xi}\big(\lfloor c \log n\rfloor\big) 
= o_{\sss\prob}(1) + \left\{\begin{array}{ll}
1, &c \in [0,c_*),\\
0, &c \in (c_*,\infty).
\end{array}
\right.
\end{equation}
\end{itemize}
\end{corollary}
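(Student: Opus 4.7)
The claim follows from a direct combination of Theorem~\ref{thm:main}, Theorem~\ref{thm:notmain}\ref{thm:notmain-A}, and Theorem~\ref{thm:stat}, after choosing the correct time scale in each of the three regimes. Theorem~\ref{thm:main}, which applies whenever $t=O(\log n)$, reduces the task to evaluating the two factors $\prob_{x,\xi}(\tau>t)$ and $\Dstat_{x,\xi}(t)$ at either $t=\lfloor c/\alpha_n\rfloor$ or $t=\lfloor c\log n\rfloor$; the three regimes in the corollary are exactly those distinguished by where the rewiring time scale $1/\alpha_n$ sits relative to the static cutoff window $c_*\log n$.

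In the supercritical case \ref{locrwmixing-1}, with $t=\lfloor c/\alpha_n\rfloor$, the hypothesis $\alpha_n\log n\to\infty$ gives $t=o(\log n)$, so $t/\log n<c_*$ eventually and Theorem~\ref{thm:stat} yields $\Dstat_{x,\xi}(t)=1-o_{\sss\prob}(1)$. Theorem~\ref{thm:notmain}\ref{thm:notmain-A} applied verbatim at this same time supplies $\prob_{x,\xi}(\tau>t)=\mathrm{e}^{-c}+o_{\sss\prob}(1)$, and multiplying the two factors proves \eqref{equ:supercritical}.

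In the critical case \ref{locrwmixing-2}, with $t=\lfloor c\log n\rfloor$ and $\alpha_n\log n\to\gamma$, I would write $t=\lfloor c_n/\alpha_n\rfloor$ with $c_n\to c\gamma$ and invoke Theorem~\ref{thm:notmain}\ref{thm:notmain-A} along this sequence to obtain $\prob_{x,\xi}(\tau>t)=\mathrm{e}^{-c\gamma}+o_{\sss\prob}(1)$. Feeding this into Theorem~\ref{thm:main} together with Theorem~\ref{thm:stat}---which gives $\Dstat_{x,\xi}(t)=1-o_{\sss\prob}(1)$ when $c<c_*$ and $\Dstat_{x,\xi}(t)=o_{\sss\prob}(1)$ when $c>c_*$---produces \eqref{equ:critical3}, the second sub-case using the bound $\prob_{x,\xi}(\tau>t)\leq 1$ to absorb the first factor. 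In the subcritical case \ref{locrwmixing-3}, with $t=\lfloor c\log n\rfloor$ and $\alpha_n\log n\to 0$, one has $\alpha_n t\to 0$, so for every $\varepsilon>0$ and $c_\varepsilon>0$ chosen with $\mathrm{e}^{-c_\varepsilon}>1-\varepsilon$, the time $t$ is eventually at most $\lfloor c_\varepsilon/\alpha_n\rfloor$; by the monotonicity of $t\mapsto\prob_{x,\xi}(\tau>t)$ and Theorem~\ref{thm:notmain}\ref{thm:notmain-A} at time $\lfloor c_\varepsilon/\alpha_n\rfloor$ one gets $\prob_{x,\xi}(\tau>t)\geq 1-\varepsilon+o_{\sss\prob}(1)$, whence $\prob_{x,\xi}(\tau>t)=1-o_{\sss\prob}(1)$ as $\varepsilon$ is arbitrary. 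Combining with Theorem~\ref{thm:stat} at $t=\lfloor c\log n\rfloor$ yields \eqref{equ:subcritical3}.

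The only step that is not a direct substitution is the extension of Theorem~\ref{thm:notmain}\ref{thm:notmain-A} from a fixed $c$ to a sequence $c_n\to c\gamma$, required in case \ref{locrwmixing-2}; I expect this to be transparent from the proof of that theorem, since the first-passage approximation underlying the limit of $\tau$ is locally uniform in the parameter. If not, a sandwich argument using the monotonicity in $t$ together with Theorem~\ref{thm:notmain}\ref{thm:notmain-A} applied at two fixed constants bracketing $c\gamma$ (arbitrarily tightly) closes the gap. No single step here is hard; the content of the corollary is really the bookkeeping of which time scale controls which factor, and the main obstacle is only to ensure that the asymptotic identities one wishes to quote can be stated along sequences rather than for individual constants.
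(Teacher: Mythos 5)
Your overall plan (link theorem $\times$ tail of $\tau$ $\times$ static cutoff, regime by regime) is the same as the paper's, and your regime \ref{locrwmixing-1} and \ref{locrwmixing-2} computations are essentially right (the bracketing fix you propose for a sequence $c_n\to c\gamma$ works in regime \ref{locrwmixing-2} because there the bracketing times are $\Theta(\log n)$). But there are two genuine gaps. First, you invoke Theorem~\ref{thm:main} without checking its hypothesis Condition~\ref{cond-regularity-dynamics} for local-to-global rewiring; the corollary does \emph{not} assume that condition, and verifying it is in fact the substantive content of the paper's proof: one shows that $\prob(I_t=1\mid\eventDSA)=\alpha_n$ exactly, so Condition~\ref{cond-regularity-dynamics}\ref{cond-regularity-dynamics-D1} holds with zero error, and that conditionally on $\{I_t=1\}$ and a dynamically self-avoiding history the half-edge $C_t(x_{t-1})$ is uniform on $H\setminus\{x_{t-1},C_{t-1}(x_{t-1})\}$, so Condition~\ref{cond-regularity-dynamics}\ref{cond-regularity-dynamics-D2} holds with error $2/\sizeH$. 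Without this verification the corollary is not a consequence of the three quoted statements.

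Second, your regime \ref{locrwmixing-3} argument applies Theorem~\ref{thm:notmain}\ref{thm:notmain-A} at time $\lfloor c_\varepsilon/\alpha_n\rfloor$, but when $\alpha_n\log n\to 0$ this time satisfies $\lfloor c_\varepsilon/\alpha_n\rfloor/\log n\to\infty$, so the hypothesis $t=O(\log n)$ of that theorem is violated; indeed in this regime no fixed $c>0$ makes $\lfloor c/\alpha_n\rfloor=O(\log n)$, so the statement of part \ref{thm:notmain-A} is vacuous there and monotonicity in $t$ cannot rescue it. The required input $\prob_{x,\xi}(\tau>\lfloor c\log n\rfloor)=1-o_{\sss\prob}(1)$ has to come from inside the proof of Theorem~\ref{thm:notmain}\ref{thm:notmain-A}: on the event $\eventSA{t}$ one has $\prob_{x,\xi}(\tau>t\mid\eventSA{t})=(1-\alpha_n)^t=\mathrm{e}^{-[1+o(1)]\alpha_n t}\to 1$ since $\alpha_n t\to 0$, and $\eventSA{t}$ holds $\whp$ uniformly in $t=O(\log n)$ by \eqref{SAWt}. (A smaller instance of the same looseness occurs in regime \ref{locrwmixing-1}: Theorem~\ref{thm:stat} is stated only for $t=\lfloor c\log n\rfloor$, whereas there $t=o(\log n)$; quote Theorem~\ref{thm:scalstat} or Corollary~\ref{maincor}, as the paper does, or use monotonicity of the static total variation distance.)
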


\begin{corollary}[Scaling of dynamic mixing time for near-to-global rewiring]
\label{nearrwmixing}
Consider the near-to-global rewiring defined in Section~\ref{sec:nearrrw}. Suppose that $\lim_{n\to\infty} \alpha_n = 0$ and $t=O(\log n)$. Subject to Condition~\ref{cond-regularity-graph}\ref{cond-regularity-graph-R1}, Condition~\ref{cond:secmom}, Condition~\ref{cond-regularity-graph2} and \eqref{c*def}, the following hold whp in $x$ and $\xi$:
\begin{itemize}
\customitem{(1)}\label{nearrwmixing-1}
If $\lim_{n\to\infty} \alpha_n r_n \log n =\infty$ and 
\begin{itemize}
\customitem{(a)}\label{nearrwmixing-1a}
$\lim_{n\to\infty} \alpha_n r_n^2=\infty$, then
\begin{equation}
\label{equ:near:1a}
\Ddyn_{x,\xi}\big(\lfloor c/\sqrt{\alpha_n}\rfloor\big) 
= o_{\sss\prob}(1) + \mathrm{e}^{-c^2/2}, \quad c \in [0,\infty).
\end{equation}
\customitem{(b)}\label{nearrwmixing-1b}
$\lim_{n\to\infty} \alpha_n r_n^2=\beta \in (0,\infty)$, then 
\begin{equation}
\label{equ:near:1b}
\Ddyn_{x,\xi}\big(\lfloor c\,r_n\rfloor\big) 
= o_{\sss\prob}(1) + \left\{\begin{array}{ll}
\hspace{-.1cm} \mathrm{e}^{-\beta c^2/2}, &c \in (0,1],\\
\hspace{-.1cm} \mathrm{e}^{-\beta(2c-1)/2}, &c \in (1,\infty).
\end{array}
\right.
\end{equation}
\customitem{(c)}\label{nearrwmixing-1c}
$\lim_{n\to\infty} \alpha_n r_n^2=0$, then
\begin{equation}
\label{equ:near:1c}
\Ddyn_{x,\xi}\big(\lfloor c/\alpha_nr_n\rfloor\big) 
= o_{\sss\prob}(1) + \mathrm{e}^{-c}, \quad c \in [0,\infty).
\end{equation}
\end{itemize}
\pagebreak
\customitem{(2)}\label{nearrwmixing-2}
If $\lim_{n\to\infty} \alpha_n r_n \log n = \gamma \in (0,\infty)$ and
\begin{itemize}
\customitem{(b)}\label{nearrwmixing-2b}
$\lim_{n\to\infty} \alpha_n r_n^2 = \beta \in (0,\infty)$, then
\begin{equation}
\label{equ:near:2b}
\Ddyn_{x,\xi}\big(\lfloor c \log n\rfloor\big) 
= o_{\sss\prob}(1) + \left\{\begin{array}{ll}
\mathrm{e}^{-(\gamma c)^2/2\beta}, &c \in [0, \beta/\gamma ], \\
\mathrm{e}^{-(2\gamma c-\beta)/2}, &c \in (\beta/\gamma, c_*), \\
0, &c \in (c_*,\infty).
\end{array}
\right.
\end{equation}
\customitem{(c)}\label{nearrwmixing-2c}
$\lim_{n\to\infty} \alpha_n r_n^2 = 0$, then
\begin{equation}
\label{equ:near:2c}
\Ddyn_{x,\xi}\big(\lfloor c \log n\rfloor\big) 
= o_{\sss\prob}(1) + \left\{\begin{array}{ll}
\mathrm{e}^{-\gamma c}, &c \in  (0, c_*),\\ 
0, &c \in (c_*,\infty).
\end{array}
\right.
\end{equation}
\end{itemize}
\customitem{(3)}\label{nearrwmixing-3} 
If $\lim_{n\to\infty}\alpha_n r_n \log n = 0$ and 
\begin{itemize}
\customitem{(c)}\label{nearrwmixing-3c}
$\lim_{n\to\infty} \alpha_n r_n^2 = 0$, then
\begin{equation}
\label{equ:near:3c}
\Ddyn_{x,\xi}\big(\lfloor c \log n\rfloor\big) 
= o_{\sss\prob}(1) + \left\{\begin{array}{ll}
1, &c \in [0,c_*),\\
0, &c \in (c_*,\infty).
\end{array}
\right.
\end{equation}
\end{itemize}
\end{itemize}
\end{corollary}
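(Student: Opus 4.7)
The plan is to derive Corollary \ref{nearrwmixing} as a direct consequence of Theorem \ref{thm:main}, Theorem \ref{thm:notmain}\ref{thm:notmain-B}, and Theorem \ref{thm:stat}, by carefully matching the rewiring time scales from Theorem \ref{thm:notmain}\ref{thm:notmain-B} with the static mixing scale $c_* \log n$ from Theorem \ref{thm:stat} in each of the six parameter regimes. The master identity is
\begin{equation}
\Ddyn_{x,\xi}(t) = \prob_{x,\xi}(\tau > t)\,\Dstat_{x,\xi}(t) + o_{\sss\prob}(1),
\end{equation}
so the whole argument amounts to substituting the correct scaling into $\prob_{x,\xi}(\tau > t)$ and deciding, for the chosen $t$, whether $\Dstat_{x,\xi}(t)$ is $1-o_{\sss\prob}(1)$ or $o_{\sss\prob}(1)$.

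I would begin by analysing which sub-regimes $(a),(b),(c)$ can coexist with conditions $(1),(2),(3)$. Since $r_n \leq [\rho+o(1)]\log n$ with $\rho < \rho_\mathrm{max}=1/\log\nu=c_*$, the identity $\alpha_n r_n \log n = (\alpha_n r_n^2)(\log n/r_n)$ shows that condition $(2)$ forces $\alpha_n r_n^2$ to be bounded (excluding sub-case (a)) and condition $(3)$ forces $\alpha_n r_n^2 \to 0$ (excluding both (a) and (b)). This explains the six sub-regimes listed. Then for the cases in $(1)$, I would verify that the corresponding time scale is $o(\log n)$: in (1a), $\alpha_n r_n^2 \to \infty$ gives $1/\sqrt{\alpha_n} \ll r_n \leq \rho_\mathrm{max}\log n$; in (1b), combining $\alpha_n r_n \log n \to \infty$ with $\alpha_n r_n^2 \to \beta$ yields $r_n \sim \sqrt{\beta/\alpha_n}$ and $\alpha_n \log^2 n \to \infty$, so $r_n = o(\log n)$; in (1c), $\alpha_n r_n \log n \to \infty$ directly says $1/(\alpha_n r_n) = o(\log n)$. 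Thus in all of $(1)$ we have $t = o(\log n) < c_* \log n$ whp, so Theorem \ref{thm:stat} gives $\Dstat_{x,\xi}(t) = 1-o_{\sss\prob}(1)$ and formulas \eqref{equ:near:1a}, \eqref{equ:near:1b}, \eqref{equ:near:1c} follow by plugging the tail probabilities from Theorem \ref{thm:notmain}\ref{thm:notmain-B} into \eqref{mixlink}.

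For cases $(2)$ and $(3)$, the chosen scale $t = \lfloor c\log n\rfloor$ straddles the static cutoff $c_*\log n$. Here I would split at $c=c_*$: for $c>c_*$, Theorem \ref{thm:stat} yields $\Dstat_{x,\xi}(t) = o_{\sss\prob}(1)$ and hence $\Ddyn_{x,\xi}(t) = o_{\sss\prob}(1)$ regardless of $\prob_{x,\xi}(\tau>t)$; for $c<c_*$, $\Dstat_{x,\xi}(t) = 1-o_{\sss\prob}(1)$, so I need to evaluate $\prob_{x,\xi}(\tau>t)$ with $t = c\log n$. In (2c) and (3c), the scaling $t\alpha_n r_n \to c\gamma$ (resp.\ $0$) and Theorem \ref{thm:notmain}\ref{thm:notmain-B}(iii) give $\prob(\tau>t)\to e^{-\gamma c}$ (resp.\ $1$). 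In (2b) I would use $r_n/\log n \to \beta/\gamma$ to rewrite $t = c\log n = (c\gamma/\beta)r_n$; applying Theorem \ref{thm:notmain}\ref{thm:notmain-B}(ii) with $c'=c\gamma/\beta$ and noting that the piecewise transition at $c'=1$ becomes a transition at $c=\beta/\gamma$ yields the two Gaussian/exponential branches of \eqref{equ:near:2b}. The additional crossover beyond $c_*$ is then handled by the general $\Dstat$ argument, and since $\beta/\gamma = \rho < c_*$ the three-piece formula is consistent.

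The main obstacle is bookkeeping rather than mathematics: I must systematically verify, for each of the six cases, (i) that the chosen rewriting of $t$ produces exactly the scaling required to invoke the correct clause of Theorem \ref{thm:notmain}\ref{thm:notmain-B}, and (ii) that the critical thresholds $c=1$, $c=\beta/\gamma$, $c=c_*$ translate coherently between the $r_n$-scale of the tail formula and the $\log n$-scale used in Theorem \ref{thm:stat}. The only place where the two crossovers genuinely interact is regime (2b), where the shape of the mixing profile changes first at $c=\beta/\gamma<c_*$ due to the tail probability and then at $c=c_*$ due to the static cutoff; checking that both transitions are continuous in $c$ and hence that \eqref{equ:near:2b} is a valid piecewise-continuous limit constitutes the only nontrivial consistency check in the proof.
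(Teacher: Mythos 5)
Your regime bookkeeping is correct and matches the paper's intended deduction: combine \eqref{mixlink} with the tail asymptotics of Theorem~\ref{thm:notmain}\ref{thm:notmain-B} and the static cut-off of Theorem~\ref{thm:stat}, check which sub-regimes are non-empty, verify that the time scales in regime (1) are $o(\log n)$, and translate the crossover at $c'=1$ on the $r_n$-scale into the crossover at $c=\beta/\gamma<c_*$ in \eqref{equ:near:2b}. However, there is a genuine gap: you invoke Theorem~\ref{thm:main} as a black box, but its hypotheses include Condition~\ref{cond-regularity-dynamics}, which is \emph{not} among the hypotheses of Corollary~\ref{nearrwmixing}. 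Verifying that near-to-global rewiring actually satisfies Condition~\ref{cond-regularity-dynamics} is therefore part of the proof of the corollary, and in the paper it is the substantive part, not bookkeeping. Part~\ref{cond-regularity-dynamics-D2} is easy (the rewired half-edge is uniform on $H$ minus two half-edges), but Part~\ref{cond-regularity-dynamics-D1} requires showing that $\prob(I_t=1\mid\eventDSA)$ equals $\beta_{n,t}$ plus an error $\vep_{n,t}$ that is $o(1/\log n)$ \emph{uniformly} in the history; this error is exactly the probability of a short-cut of length at most $r_n$ placing the edge crossed at time $t$ inside an earlier $r_n$-ball, and controlling it needs Lemma~\ref{lemma:outsideDSA} (the conditioning on $\eventDSA$ fixes only $O(t)$ half-edges, so configuration-model estimates as in \eqref{eq:CMestimates} still apply), Lemma~\ref{lem-shortcutsalt} (no short-cuts whp, using Condition~\ref{cond:secmom} and $r_n\leq(1-\vep)\rho_{\max}\log n$), and the restriction to \emph{good} paths with $\Hdeg(x_i)\leq\maxHdeg$, which in turn forces the verification of Part~\ref{cond-regularity-dynamics-D3} via the Markov inequality on the size-biased degree. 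None of this appears in your plan; the paper even flags the issue in the remark following the corollaries, stating that Condition~\ref{cond-regularity-dynamics} is needed for Theorem~\ref{thm:main} and must be checked for each rewiring mechanism.

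Two smaller points in the same direction: (i) Theorem~\ref{thm:main} also requires the full Condition~\ref{cond-regularity-graph}, of which the corollary lists only \ref{cond-regularity-graph-R1}; you should note that \ref{cond-regularity-graph-R2} and \ref{cond-regularity-graph-R3} follow from Condition~\ref{cond-regularity-graph2} (this is precisely how Corollary~\ref{maincor} packages the hypotheses). (ii) Theorem~\ref{thm:notmain}\ref{thm:notmain-B} gives the tail probability at a single scaled time with constant $c$, while in regime (2b) you substitute $t=\lfloor c\log n\rfloor=(c\gamma/\beta+o(1))\,r_n$, so a short continuity-in-$c$ (or monotonicity) remark is needed to transfer the limit; this is minor, but the missing verification of Condition~\ref{cond-regularity-dynamics} is not.
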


\begin{corollary}[Scaling of dynamic mixing time for global-to-global rewiring]
\label{globalrwmixing}
Consider the global-to-global rewiring defined in Section~\ref{sec:globrrw}. Suppose that $\lim_{n\to\infty}\alpha_n = 0$ and $t=O(\log n)$. Subject to Condition~\ref{cond-regularity-graph}\ref{cond-regularity-graph-R1}, Condition~\ref{cond-regularity-graph2} and \eqref{c*def}, the following hold whp in $x$ and $\xi$:
\begin{itemize}
\customitem{(1)}\label{globalrwmixing-1}
If $\lim_{n\to\infty}\alpha_n ( \log n )^2 = \infty$, then
\begin{equation}
\label{equ:globalsupercritical}
\Ddyn_{x,\xi}\big(\lfloor c/\sqrt{\alpha_n}\rfloor\big) 
= o_{\sss\prob}(1) + \mathrm{e}^{-c^2/2}, \quad c \in [0,\infty).
\end{equation}
\customitem{(2)}\label{globalrwmixing-2}
If $\lim_{n\to\infty}\alpha_n ( \log n )^2 = \gamma \in (0,\infty)$, then
\begin{equation}
\label{equ:globalcritical3}
\Ddyn_{x,\xi}\big(\lfloor c \log n\rfloor\big) 
= o_{\sss\prob}(1) + \left\{\begin{array}{ll}
\mathrm{e}^{-\gamma c^2/2}, &c \in [0,c_*),\\
0, &c \in (c_*,\infty).
\end{array}
\right.
\end{equation}
\customitem{(3)}\label{globalrwmixing-3}
If $\lim_{n\to\infty} \alpha_n (\log n)^2= 0$, then
\begin{equation}
\label{equ:globalsubcritical3}
\Ddyn_{x,\xi}\big(\lfloor c \log n\rfloor\big) 
= o_{\sss\prob}(1) + \left\{\begin{array}{ll}
1, &c \in [0,c_*),\\
0, &c \in (c_*,\infty).
\end{array}
\right.
\end{equation}
\end{itemize}
\end{corollary}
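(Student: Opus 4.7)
My plan is to synthesize three ingredients already available: Theorem~\ref{thm:main} (the static/dynamic link), Theorem~\ref{thm:notmain}\ref{thm:notmain-C} (the tail of $\tau$ for global-to-global rewiring), and Theorem~\ref{thm:stat} (the static cutoff). In each of the three cases \ref{globalrwmixing-1}--\ref{globalrwmixing-3} the prescribed $t_n$ satisfies $t_n=O(\log n)$, so by \eqref{mainthm3equ} one has $\Ddyn_{x,\xi}(t_n)=\prob_{x,\xi}(\tau>t_n)\,\Dstat_{x,\xi}(t_n)+o_{\sss\prob}(1)$ \whp. The task therefore reduces to evaluating the two factors on the right separately and multiplying; the regime of $\alpha_n(\log n)^2$ determines which factor carries the non-trivial limit.

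For \ref{globalrwmixing-1} I would take $t_n=\lfloor c/\sqrt{\alpha_n}\rfloor$; since $\alpha_n(\log n)^2\to\infty$ we have $t_n=o(\log n)$, so $t_n<c_*\log n$ eventually and Theorem~\ref{thm:stat} gives $\Dstat_{x,\xi}(t_n)=1-o_{\sss\prob}(1)$, while Theorem~\ref{thm:notmain}\ref{thm:notmain-C} gives $\prob_{x,\xi}(\tau>t_n)=\mathrm{e}^{-c^2/2}+o_{\sss\prob}(1)$; multiplying gives \eqref{equ:globalsupercritical}. For \ref{globalrwmixing-2} I would take $t_n=\lfloor c\log n\rfloor$, so $t_n\sqrt{\alpha_n}\to c\sqrt{\gamma}$; Theorem~\ref{thm:notmain}\ref{thm:notmain-C} used with effective prelimit constant $c\sqrt{\gamma}$ yields $\prob_{x,\xi}(\tau>t_n)=\mathrm{e}^{-\gamma c^2/2}+o_{\sss\prob}(1)$, and Theorem~\ref{thm:stat} contributes $1-o_{\sss\prob}(1)$ for $c<c_*$ and $o_{\sss\prob}(1)$ for $c>c_*$, delivering \eqref{equ:globalcritical3}. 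For \ref{globalrwmixing-3} I would again take $t_n=\lfloor c\log n\rfloor$ and observe $t_n=o(1/\sqrt{\alpha_n})$; by monotonicity of $s\mapsto\prob_{x,\xi}(\tau>s)$ together with Theorem~\ref{thm:notmain}\ref{thm:notmain-C} applied at arbitrarily small $c'>0$, one obtains $\prob_{x,\xi}(\tau>t_n)=1-o_{\sss\prob}(1)$, and combining with Theorem~\ref{thm:stat} yields \eqref{equ:globalsubcritical3}.

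The main technical wrinkle is that Theorem~\ref{thm:notmain}\ref{thm:notmain-C} is stated at a \emph{fixed} constant $c$, whereas in \ref{globalrwmixing-2} I must evaluate the tail at the $n$-dependent rescaling $c_n=c\sqrt{\alpha_n}\,\log n\to c\sqrt{\gamma}$ and in \ref{globalrwmixing-3} at a rescaling that tends to $0$. I would handle this by a monotone-sandwich argument: for any $\varepsilon>0$, Theorem~\ref{thm:notmain}\ref{thm:notmain-C} applied at $c\sqrt{\gamma}\pm\varepsilon$ (respectively at $\varepsilon$ in case~\ref{globalrwmixing-3}) brackets $\prob_{x,\xi}(\tau>t_n)$ between two explicit limits, and continuity of $c\mapsto\mathrm{e}^{-c^2/2}$ lets me send $\varepsilon\downarrow 0$. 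Everything else is bookkeeping: the hypotheses of Theorem~\ref{thm:main} and Theorem~\ref{thm:stat} are implied by the combined regularity conditions assumed in the corollary, together with the verifications already performed upstream for the specific global-to-global mechanism of Section~\ref{sec:globrrw}.
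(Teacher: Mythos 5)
Your proposal is correct and follows essentially the same route as the paper: the paper proves Corollary~\ref{globalrwmixing} by combining the tail asymptotics \eqref{globtauprob} (Theorem~\ref{thm:notmain}\ref{thm:notmain-C}) with Corollary~\ref{maincor}, which is exactly your factorization of $\Ddyn_{x,\xi}(t)$ into $\prob_{x,\xi}(\tau>t)$ times the static behaviour from Theorem~\ref{thm:main} and Theorem~\ref{thm:scalstat}. Your monotone-sandwich handling of the $n$-dependent rescaling in regimes \ref{globalrwmixing-2}--\ref{globalrwmixing-3} is a harmless elaboration of what the paper gets for free from the uniformity in $t=O(\log n)$ of \eqref{globtauprob}.
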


Note that the dynamic mixing time exhibits a \emph{trichotomy} that distinguishes between \emph{fast} dynamics (regime (1)), \emph{moderate} dynamics (regime (2)) and \emph{slow} dynamics (regime (3)). There is \emph{no} cut-off for fast dynamics, \emph{one-sided} cut-off (at $c=c_*$) for moderate dynamics, and \emph{two-sided} cut-off (at $c=c_*$) for slow dynamics. For near-to-global rewiring there are several subregimes (regimes (2)(a), (3)(a) and (3)(b) are not relevant). See Figs.~\ref{fig:tvdisttrichotomy-ltg}--\ref{fig:tvdisttrichotomy-ntg} for the various scaling shapes (where the indices $x$ and $\xi$ are suppressed).

\begin{figure}[p]
\centering
\begin{minipage}{.48\textwidth}
\centering
\begin{subfigure}{\linewidth}
\centering
\includegraphics[width=\linewidth]{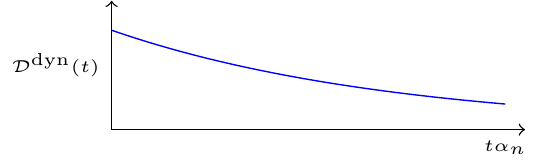}
\subcaption*{Regime \ref{locrwmixing-1}: \eqref{equ:supercritical}.}
\label{fig:ltg:supercrit}
\end{subfigure}
\begin{subfigure}{\linewidth}
\centering
\includegraphics[width=\linewidth]{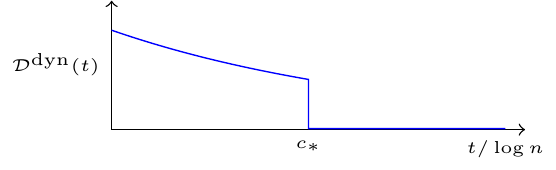}
\subcaption*{Regime \ref{locrwmixing-2}: \eqref{equ:critical3}.}
\label{fig:ltg:crit}
\end{subfigure}
\begin{subfigure}{\linewidth}
\centering
\includegraphics[width=\linewidth]{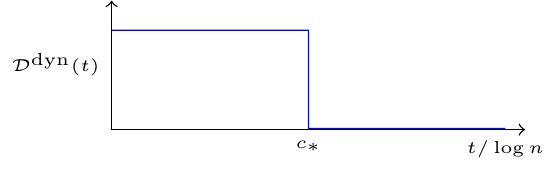}
\subcaption*{Regime \ref{locrwmixing-3}: \eqref{equ:subcritical3}.}
\label{fig:ltg:subcrit}
\end{subfigure}
\caption{Plot of $\Ddyn(t)$ for local-to-global rewiring (Corollary~\ref{locrwmixing}).}
\label{fig:tvdisttrichotomy-ltg}
\end{minipage}
\hfill\vline\hfill
\begin{minipage}{.48\textwidth}
\centering
\begin{subfigure}{\linewidth}
\centering
\includegraphics[width=\linewidth]{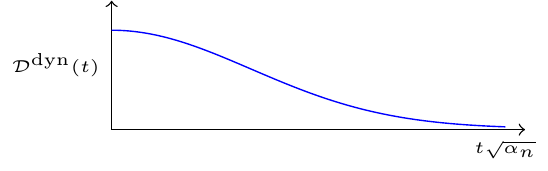}
\subcaption*{Regime \ref{globalrwmixing-1}: \eqref{equ:globalsupercritical}.}
\label{fig:gtg:supercrit}
\end{subfigure}
\begin{subfigure}{\linewidth}
\centering
\includegraphics[width=\linewidth]{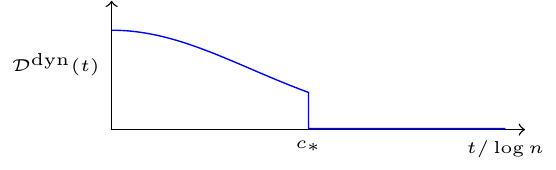}
\subcaption*{Regime \ref{globalrwmixing-2}: \eqref{equ:globalcritical3}.}
\label{fig:gtg:crit}
\end{subfigure}
\begin{subfigure}{\linewidth}
\centering
\includegraphics[width=\linewidth]{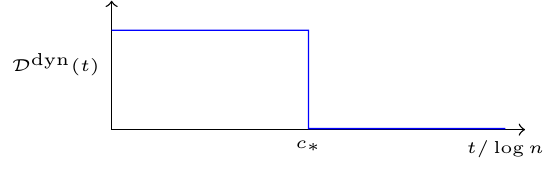}
\subcaption*{Regime \ref{globalrwmixing-3}: \eqref{equ:globalsubcritical3}.}
\label{fig:gtg:subcrit}
\end{subfigure}
\caption{Plot of $\Ddyn(t)$ for global-to-global rewiring (Corollary~\ref{globalrwmixing}).}
\label{fig:tvdisttrichotomy-gtg}
\end{minipage}
\end{figure}

\begin{figure}[p]
\centering
\begin{subfigure}{.5\textwidth}
\centering
\includegraphics[width=\textwidth]{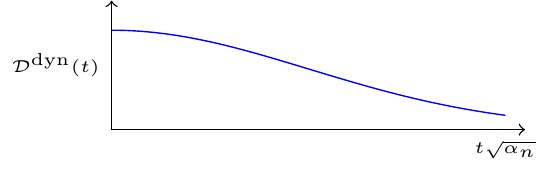}
\subcaption*{Regime 1\ref{nearrwmixing-1a}: \eqref{equ:near:1a}.}
\label{fig:ntl:1a}
\end{subfigure}%
\begin{subfigure}{.5\textwidth}
\centering
\includegraphics[width=\textwidth]{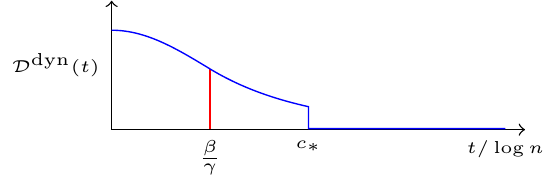}
\subcaption*{Regime 2\ref{nearrwmixing-2b}: \eqref{equ:near:2b}.}
\label{fig:ntl:2b}
\end{subfigure}
\begin{subfigure}{.5\textwidth}
\centering
\includegraphics[width=\textwidth]{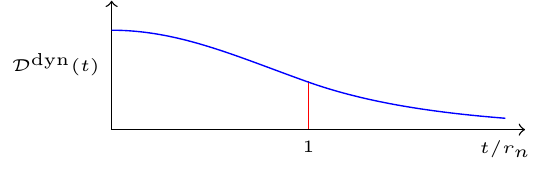}
\subcaption*{Regime 1\ref{nearrwmixing-1b}: \eqref{equ:near:1b}.}
\label{fig:ntl:1b}
\end{subfigure}%
\begin{subfigure}{.5\textwidth}
\centering
\includegraphics[width=\textwidth]{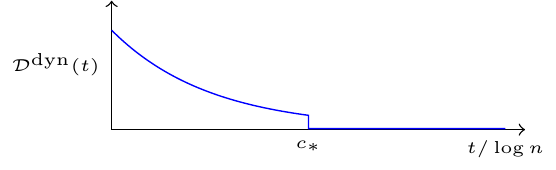}
\subcaption*{Regime 2\ref{nearrwmixing-2c}: \eqref{equ:near:2c}.}
\label{fig:ntl:2c}
\end{subfigure}
\begin{subfigure}{.5\textwidth}
\centering
\includegraphics[width=\textwidth]{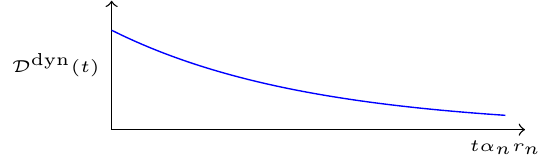}
\subcaption*{Regime 1\ref{nearrwmixing-1c}: \eqref{equ:near:1c}.}
\label{fig:ntl:1c}
\end{subfigure}%
\begin{subfigure}{.5\textwidth}
\centering
\includegraphics[width=\textwidth]{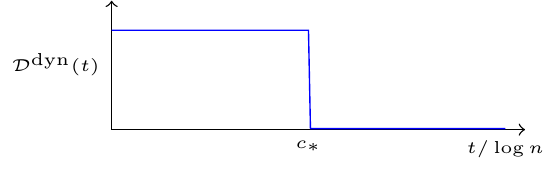}
\subcaption*{Regime 3\ref{nearrwmixing-3c}: \eqref{equ:near:3c}.}
\label{fig:ntl:3c}
\end{subfigure}
\caption{Plot of $\Ddyn(t)$ for near-to-global rewiring (Corollary~\ref{nearrwmixing}). The red lines indicate a crossover in the shape of the curve.}
\label{fig:tvdisttrichotomy-ntg}
\end{figure}

\begin{remark}[Role of Condition~\ref{cond-regularity-dynamics}]
Corollaries~\ref{locrwmixing}--\ref{globalrwmixing} do not mention Condition~\ref{cond-regularity-dynamics} explicitly, even though this is needed for Theorem~\ref{thm:main}. The reason is that the three rewiring mechanisms under consideration satisfy Condition~\ref{cond-regularity-dynamics}, as shown in Section~\ref{sec:ex}.
\end{remark}


\subsection{Discussion}
\label{sec:disc}
\newcounter{counterdisc}

{\bf \stepcounter{counterdisc}\thecounterdisc.}
Each of the three choices of rewiring shows a \emph{trichotomy} between fast dynamics ($\alpha_nr_n \gg 1/\log n)$, moderate dynamics ($\alpha_nr_n \asymp 1/\log n)$ and slow dynamics ($\alpha_nr_n \ll 1/\log n)$, with $r_n=1$ for local-to-global rewiring, $1 < r_n < r_{\mathrm{max}}$ for near-to-global rewiring and $r_n=r_{\mathrm{max}}$ for global-to-global rewiring. For fast dynamics the mixing time is of smaller order than $\log n$, which is the mixing time on the static random graph, and so \emph{speed-up} occurs. For moderate and slow dynamics the mixing time is of order $\log n$, and so \emph{no speed-up} occurs. The one-sided cut-off for moderate dynamics shows that there is a \emph{competition} between static and dynamic. For fast dynamics only Conditions~\ref{cond-regularity-graph} and \ref{cond-regularity-dynamics} are needed, while for moderate and slow dynamics \eqref{c*def} and Condition~\ref{cond-regularity-graph2} are needed as well. For fast dynamics the scaling does \emph{not} depend on the choice of degrees, subject to the mild regularity imposed by Condition~\ref{cond-regularity-graph}. On other hand, for moderate and slow dynamics it does, because the constant $c_*$ equals the limit as $n\to\infty$ of the empirical average of the logarithm of the degrees of the half-edges.   

\medskip\noindent
{\bf \stepcounter{counterdisc}\thecounterdisc.}
Whereas for local-to-global and global-to-global rewiring the trichotomy controls the scaling, for near-to-global rewiring several \emph{subregimes} show up. In particular, crossovers in the mixing time occur at critical values of the scaling parameter $c$ (see \eqref{equ:near:1b} and \eqref{equ:near:2b}). These arise from a crossover in the cross-rewired time that appears as soon as $t  \asymp r_n$ (see \eqref{ntg2}). What happens is that all edges on the $r_n$-future of the path can be rewired before the random walk reaches them, but only until time $t-r_n$: for any time $s \in (t-r_n,t]$ only $t-s$ edges are left on the future path until time $t$. The extra condition in Condition~\ref{cond:secmom} ensures that $\whp$ the $r_n$-balls carried around by the random walk do not overlap significantly, i.e., short-cuts of length $\leq r_n$ are negligible until time $t=O(\log n)$.

\medskip\noindent        
{\bf \stepcounter{counterdisc}\thecounterdisc.}
Regime (2b) for near-to-global rewiring corresponds to $\rho = \beta/\gamma$. Subject to Condition~\ref{cond:secmom} we have
\begin{equation}
\frac{1}{c_*} =  \sum_{m\in\N} p^\star(m) \log m,
\end{equation} 
with
\begin{equation}
p^\star(m) = \frac{1}{N} (m+1) p(m+1), \quad m \in \N_0, \qquad N = \sum_{m\in\N} m p(m), 
\end{equation}
where $p(m)=\lim_{n\to\infty} p_n(m)$ with $p_n=\frac{1}{n} \sum_{v \in V} \delta_{\deg(v)}$ the empirical degree distribution (see \eqref{degdef3}, Theorem~\ref{thm:scalstat} and \eqref{c*def}; Condition~\ref{cond-regularity-graph2}, which is needed for Theorem~\ref{thm:scalstat}, implies that $p(1)=p(2)=0$). By Jensen's inequality, 
\begin{equation}
\sum_{m\in\N} p^\star(m) \log m \leq \log\left(\sum_{m\in\N} p^\star(m)\,m\right)=\log{\nu}.
\end{equation}
Consequently, $c_* \geq \rho_\mathrm{max}$ (see \eqref{rhomax}, \eqref{nudef} and \eqref{c*def}), with equality if and only if $p$ is a point mass. Thus, the cut-off threshold $c_*$ exceeds the maximal value of the radius, as shown in Fig.~\ref{fig:tvdisttrichotomy-ntg}.     
 
\medskip\noindent
{\bf \stepcounter{counterdisc}\thecounterdisc.} 
The coupling of the random walk on the dynamically rewired random graph to the modified random walk is implicit in the proof of the main theorem in \cite{AGHH20182}. There the main idea was that the path probabilities for the two random walks coincide for self-avoiding paths, and it was shown that the two random walks are with high probability self-avoiding. The crucial observation was that, on a typical configuration drawn according to the configuration model, the random walks are self-avoiding with high probability. The particular form of Condition~\ref{cond-regularity-dynamics} was motivated by this observation, and suggests that the same results may hold when the initial graph is drawn according to some other distribution, on which non-backtracking random walks are typically self-avoiding.

\medskip\noindent
{\bf \stepcounter{counterdisc}\thecounterdisc.}
The graph regularity conditions in Condition~\ref{cond-regularity-dynamics} are mild, but can be violated. Consider for example a modification of the local-to-global rewiring in which the probability $\alpha_n$ of the half-edge $X_{t-1}$ being rewired at time $t$ depends on a specific choice of $X_{t-1}$, e.g.\ $\alpha_n(X_{t-1}) = 1/\deg_H(X_{t-1})$. This would lead to a violation of Condition~\ref{cond-regularity-dynamics}\ref{cond-regularity-dynamics-D1}. Condition~\ref{cond-regularity-dynamics}\ref{cond-regularity-dynamics-D2} can be violated by a graph rewiring mechanism that at each time gives preferential treatment to some half-edges. For example, fix a set of half-edges $F$ with $|F|\ll n$, and define a \enquote{local-to-F} graph dynamics where the edge that might get rewired at time $t$ with $\{X_{t-1}, C_{t-1}(X_{t-1})\}$ is chosen from the set of edges generated by the configuration $C_{t-1}$ such that each edge contains at least half-edge from $F$. This obviously results in a violation of the Condition~\ref{cond-regularity-dynamics}\ref{cond-regularity-dynamics-D2}.

\medskip\noindent
{\bf \stepcounter{counterdisc}\thecounterdisc.}
The scaling regimes considered in Theorem~\ref{thm:notmain}, Corollaries~\ref{locrwmixing}--\ref{globalrwmixing} and Figures~\ref{fig:tvdisttrichotomy-ltg}--\ref{fig:tvdisttrichotomy-ntg} are chosen so as to end up with \emph{non-trivial scaling profiles}. Apart from conditions on $t$ in terms of $\alpha_n$ and $r_n$, there is also the implicit condition that $t = O(\log n)$. However, since the probability that $\tau>t$ is monotone decreasing in both $\alpha_n$ and $r_n$,  we get trivial scaling profiles outside these regimes. 


\subsection{Previous work}
\label{sec:literature}

The past decade has witnessed much activity towards understanding processes -- both random and deterministic -- on dynamic networks \cite{MS2018,C2011,FNRdST2012,CST2015,BGKM2016,GSS2014,APR2016,KO2011,SRP2015,WPRS2019}. Research is motivated not only by mathematical interest, but also by numerous applications in computer science and data science. One of the emerging efforts is concerned with the study of mixing times of random walks on dynamic networks, and how they compare with those of random walks on static networks. The present paper fits within this line of research.

In \cite{AGHH2018} we introduced a version of a \emph{dynamic configuration model} in which a fraction of the edges gets rewired at each step of the random walk according to a global-to-global rewiring mechanism. We obtained an expression for the mixing time of a \emph{non-backtracking} random walk under conditions that guarantee a locally tree-like structure of the graph and fast dynamics. In \cite{AGHH20182} we extended our results to moderate and slow dynamics. In particular, we obtained a \emph{trichotomy} for the mixing time of non-backtracking random walks, of the type as stated in Corollary~\ref{globalrwmixing}. In the current paper, however, we achieve this trichotomy under \emph{weaker assumptions}.

Trichotomies were also found in subsequent work. The closest to our setting is \cite{CQ2019-2}, where the authors consider a dynamic \emph{directed} version of the configuration model. Contrary to our setting, for the directed graph the rewiring no longer preserves the stationary measure, and the analysis in\cite{CQ2019-2} is restricted to a rewiring mechanism in which all the edges are freshly resampled at each step of the random walk. Two trichotomies are derived for the worst-case total variation distance, respectively, for the joint Markov process given by the graph and the random walk and for the non-Markov process given by the random walk marginal. Trichotomies can also emerge in the presence of other random mechanisms that do not directly change the graph. This is well illustrated in \cite{CQ2019,VS2019,WPRS2019}, where crossovers were established for random walks on random graphs with various PageRank-like transitions. Results are analogous to Theorem~\ref{thm:main}, with the role of the randomized stopping time $\tau$ replaced by the first time the walk gets \enquote{teleported} by a PageRank-like transition.

Mixing studies for random walks on dynamic random graphs started with \cite{PSS2015}, which considered random walks on \emph{dynamic percolation} clusters on a $d$-dimensional \emph{discrete torus}, i.e., a stochastic version of percolation where edges appear and disappear independently at a given rate. In \cite{PSS2015} and subsequent works \cite{PSS2017,HS2019}, mixing times were identified for several parameter regimes controlling the rates of the random walk and the random graph dynamics. Similar results were obtained for dynamic percolation on the \emph{complete graph} \cite{ST2018,SZ2019}. One of the main difficulties with the dynamic percolation setting is that the stationary distribution of the random walk changes over time, which explains why results tend to be restricted to specific parameter regimes.

Some further advances were achieved in \cite{SZ2019,CKL2018}, where general bounds on mixing times, and other quantities such as hitting, cover and return times, were derived for certain classes of evolving graphs under proper \emph{expansion assumptions}. Typically, random walk mixing on a dynamic graph is \emph{faster} than on a static graph, although \cite{CKL2018} contains some (artificial) examples where the dynamics makes the mixing slower. Speed-up of mixing times for general Markov chains was recently analysed in \cite{CD2020}, which also contains an overview of related results. 

Unlike for dynamic graphs, mixing times of random walks on \emph{static} random graphs form a well-established subject. For the present paper it is important to note the work in \cite{LS2010,BLPS2018,BHS2017}, where (two-sided) cut-offs on time scale $\log n$ were established for both simple and non-backtracking random walks on a fairly general class of sparse \emph{undirected} random graphs with \emph{good expansion properties}. More recently, similar results were obtained for static random graphs with \emph{directed} edges \cite{BCS2018,BCS2019} or with a \emph{community structure} \cite{BH2018}.


\subsection{Outline}

The remainder of this paper is organised as follows. In Section~\ref{sec:RWDRG} we define the random walk and the random graph dynamics. In Section~\ref{sec:mainthm} we prove Theorem~\ref{thm:main}. In Section~\ref{sec:ex} we prove Theorem~\ref{thm:notmain}. In Appendix~\ref{appA} we show that the joint Markov chain of random walk and dynamically rewired random graph is irreducible, aperiodic and doubly-stochastic. In Appendix~\ref{appB} we recall the precise form of Theorem~\ref{thm:scalstat}. In Appendix~\ref{appC} we identify the general form of the transition matrix for rewirings and prove that the stationary distribution for the class of \enquote{anything-to-global} rewirings is the uniform distribution on $H$.


\section{Random graph dynamics and random walk}
\label{sec:RWDRG}

In this section we set up the model. In Section~\ref{ss:rew} we give a general description of the rewiring mechanism for the random graph (specific choices will be considered in Section~\ref{sec:ex}). In Section~\ref{ss:rw} we define the non-backtracking random walk. In Section~\ref{ss:jointMC} we define the joint process of random graph and random walk.
 

\subsection{Random graph dynamics}
\label{ss:rew}

We consider a general class of graph dynamics in which some edges are randomly rewired at each unit of time according to a prescribed rule. First a subset of edges to be rewired is chosen randomly, then these edges are broken into half-edges, and afterwards the resulting half-edges are paired randomly according to a prescribed distribution. The set of half-edges involved in the rewiring at time $t\in\N$ is denoted by $R_t$.

Suppose that $X_{t-1} = x$ and $C_{t-1} = \xi$. Then, at time $t$, the above dynamics gives rise to a distribution $Q_x(\xi,\cdot)$ on $\Conf_H$. In \cite{AGHH2018}, \cite{AGHH20182} a specific choice of dynamics was considered in which $Q_x(\xi,\cdot)$ did not actually depend on $x$. In such a situation, the configuration component forms a Markov chain itself.


\subsection{Random walk} 
\label{ss:rw}

We consider a \emph{non-backtracking random walk} on a dynamic random graph in which some edges are rewired at each step. By non-backtracking we mean that the random walk cannot traverse the same edge \emph{twice in a row}. Since in our model the underlying graph is dynamic and the edges change over time, the random walk is more conveniently defined as a random walk on the set of half-edges $H$. Recall that at time $t\in\N$ we update the configuration to $C_t = \xi$ and only then let the random walk make a move. Then the random walk moves according to the transition probabilities
\begin{align}
\label{NBT3}
P_\xi(x,y) \coloneqq
\begin{cases}
\frac{1}{\Hdeg(y)} & \text{if }\xi(x)\sim y\text{ and }\xi(x)\neq y, \\
0 & \text{otherwise}.
\end{cases}
\end{align}
More descriptively, when the random walk is on a half-edge $x$ and the graph is in configuration $\xi$, the random walk moves to one of the siblings of the half-edge that the current half-edge $x$ is paired with, chosen uniformly at random (see Fig.~\ref{fig:nbrw_move3}). The transition probabilities are symmetric with respect to the pairing given by $\xi$, i.e., $P_\xi(x,y) = P_\xi(\xi(y),\xi(x))$. In particular, the transition matrix is doubly stochastic, and so the uniform distribution on $H$, denoted by $U_H$, is the stationary distribution for the random walk process:

 \begin{figure}[htbp]
 \centering
 \includegraphics[width=0.20\textwidth]{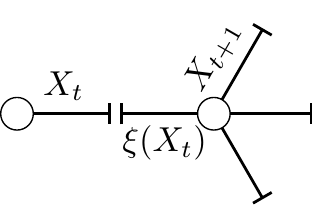}
 \caption{\small The random walk moves from half-edge $X_t$ to half-edge $X_{t+1}$, one of 
 the siblings of the half-edge $\xi(X_t)$ that $X_t$ is paired to.}
 \label{fig:nbrw_move3}
 \end{figure}

\subsection{Joint process} 
\label{ss:jointMC}

The law of the joint Markov chain $(X_t,C_t)_{t\in\N}$, starting from initial half-edge $x\equiv X_0$ and initial configuration $\xi \equiv C_0$, is given by the conditional probabilities
\begin{equation}
\prob_{x,\xi}(X_t = z,C_t = \zeta\mid X_{t-1} = y, C_{t-1} = \eta) 
=Q_y(\eta,\zeta)P_\zeta(y,z),\quad t\in\N,
\end{equation}
with
\begin{equation}
\prob_{x,\xi}(X_0 = x,C_0 = \xi) = 1,
\end{equation}
where the transition probabilities $Q_y(\cdot,\cdot)$ remain to be chosen. While the joint process is Markov, the marginal processes $X = (X_t)_{t\in\N}$ and $C = (C_t)_{t\in\N}$ need \emph{not} be Markov. Consequently, the total variation distance $\|\prob_{x,\xi}(X_t\in\cdot) - U_H(\cdot)\|_{\TV}$ is not guaranteed to be decreasing in $t$, even when it converges to 0.

We emphasise that at each time step the graph evolution happens first and only then the random walk makes a move.

Furthermore, note that when the graph dynamics does not depend on the random walk, i.e., $Q_x(\cdot,\cdot) = Q_y(\cdot,\cdot)$ for all $x,y\in H$, the uniform distribution $U_H$ is the stationary distribution for the random walk, i.e., for all $\xi\in\Conf_H$ and $t\in\N$,
\begin{equation}
\sum_{x\in H}\frac{1}{\sizeH}\prob_{x,\xi}(X_t\in\cdot) = U_H(\cdot).
\end{equation}
This can easily be seen by noting that the random walk conditioned on a realisation of the graph dynamics is a time-inhomogeneous Markov chain for which $U_H$ is the stationary distribution. 


\section{Proof of the main theorem}
\label{sec:mainthm}

In this section we build up the apparatus that is required to prove Theorem~\ref{thm:main}. In Section~\ref{ss:regconds} we formulate the regularity conditions for the graph and its evolution. In Section~\ref{sec:modrw} we introduce the modified random walk, which lives on the static random graph. In Section~\ref{sec:coupling} we propose a coupling of the modified random walk and the dynamically rewired random walk. In Section~\ref{sec:fail} we analyse the errors in the coupling. In Section~\ref{sec:proofmainthm3} we use the coupling to prove Theorem~\ref{thm:main}.


\subsection{Regularity conditions}
\label{ss:regconds}

In the formulation of Theorem~\ref{thm:main} we refer to certain regularity conditions, which we lay out next. The first set of conditions concerns the degrees of the graph:

\begin{condition}[Regularity of degrees]\label{cond-regularity-graph}$\mbox{}$
\begin{itemize} 
\customitem{(R1)}\label{cond-regularity-graph-R1}
$\sizeH =\Theta(n)$ as $n\to\infty$.
\customitem{(R2)}\label{cond-regularity-graph-R2}
$\max \limits_{v\in V}\deg(v)\eqqcolon\dmax = o(n/(\log n)^2)$ as $n\to\infty$.
\customitem{(R3)}\label{cond-regularity-graph-R3} 
$\deg(v)\geq 2$ for all $v\in V$.
\end{itemize}
\end{condition}

\noindent
Condition~\ref{cond-regularity-graph}\ref{cond-regularity-graph-R1} ensures that the graph is sparse, and together with Condition~\ref{cond-regularity-graph}\ref{cond-regularity-graph-R2} guarantees that the paths of the random walk are with high probability self-avoiding on relevant time scales (see Lemma \ref{mainlemma} below). Condition~\ref{cond-regularity-graph}\ref{cond-regularity-graph-R3} is a consistency condition ensuring that the non-backtracking random walk is well-defined.

As stated in the introduction, Condition~\ref{cond-regularity-graph} is standard in the literature, unlike the forthcoming Condition~\ref{cond-regularity-dynamics}. To state this new condition, we require further notation:

\begin{definition}[Path-tracking sequences]
\label{def:pathseq}
For $s,t \in\N$ with $s \leq t$, define $[s,t] \coloneqq \{ s,\dots,t\}$ and $[t] \coloneqq [1,t] = \{1,\dots,t\}$. For $r\in\N$, $t_1,\ldots,t_r\in\N$ with $t_1<\dots<t_r\leq t-1$, introduce a set of times 
\begin{equation}
\label{eq:def:T}
T \coloneqq \{ t_1,\dots,t_r \},
\end{equation}
and sequences of half-edges
\begin{equation}
\begin{split}
x_{[0,t-1]} \coloneqq \left( x_0, \dots, x_{t-1}\right),&
\quad \bar{x}_{[0,t-1]} \coloneqq \left(\bar{x}_0, \dots, \bar{x}_{t-1}\right),\\
\hat{x}_{[r]} \coloneqq \left(\hat{x}_1, \dots, \hat{x}_r\right),&
\quad \tilde{x}_{[r]} \coloneqq \left(\tilde{x}_1, \dots, \tilde{x}_r\right).
\end{split}
\end{equation}
\end{definition}

\vspace{-0.3cm}
\begin{figure}[htbp]
\centering
\includegraphics[scale=0.8]{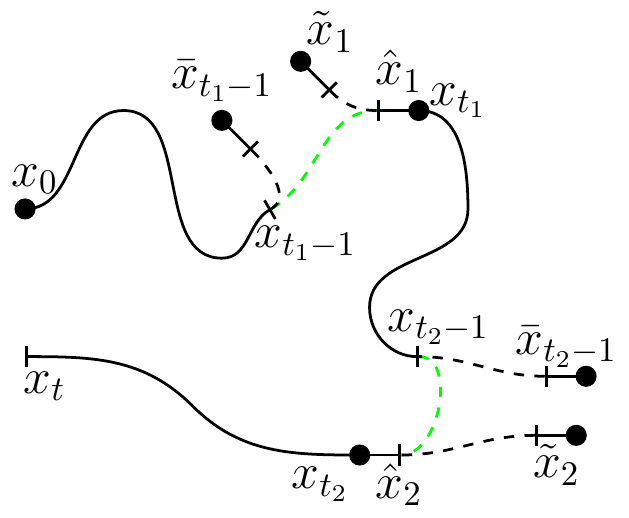}
\caption{\small Illustration of the event $\eventDSA$ and the role of the sequences in Definition~\ref{def:pathseq}. Dashed black lines show the pairing in the initial configuration, while dashed green lines show the pairings that are seen by the random walk.}
\label{fig:DS}
\end{figure}

\begin{definition}[Dynamic self-avoidance]
\label{def:DSA} 
The sequences $x_{[0,t-1]}, \bar{x}_{[0,t-1]}, \tilde{x}_{[r]}$ are called \emph{dynamically self-avoiding with respect to} $T$ if the sequences of vertices 
\begin{equation}
\left(v(x_0),\dots,v(x_{t-1})\right), \quad \left(v(\bar{x}_{t_1-1}),\dots,v(\bar{x}_{t_r-1})\right), 
\quad \left(v(\tilde{x}_1),\dots,\allowbreak v(\tilde{x}_r)\right),
\end{equation} 
are all distinct. Let $\eventDSA$ be the event that (see Fig.~\ref{fig:DS}):
\begin{itemize}
\item $x_{[0,t-1]}, \bar{x}_{[0,t-1]}, \tilde{x}_{[r]}$ are dynamically self-avoiding with respect to $T$.
\item $I_s = 1$ for $s\in T$ and $I_s = 0$ for $s\in[t-1]\setminus T$.
\item $C_0(x_s) = \bar{x}_s$ for $s = 0,\dots,t-1$, where $C_0$ is the configuration at time $t=0$.
\item $C_{t_i}(x_{t_i-1}) = \hat{x}_i$ for $i = 1,\dots,r$. 
\item $C_0(\hat{x}_i) = \tilde{x}_i$ for $i = 1,\dots,r$.
\item $X_s = x_s$ for $s = 0,\dots,t-1$.
\end{itemize}
When the event $\eventDSA$ occurs, we say that the random walk on the dynamic random graph has a \emph{dynamically self-avoiding history up to time} $t$. We call $x_{[0, t-1]}$ \emph{good} when $\Hdeg(x_i) \leq \maxHdeg$ for $0 \leq i \leq t-1$ and some $\epsilon>0$ fixed, i.e., for all half-edges in the sequence $x_{[0, t-1]}$ the degree is $O(\maxHdeg)$. A sequence $x_{[0, t-1]}$ that is not good is called bad.
\end{definition}

\begin{remark}[Interpretation of sequences]
\label{rem:sequences}
 On the event $\eventDSA$, the sequences in Definition~\ref{def:pathseq} have the following interpretation:
\begin{itemize}
\item $T \coloneqq \{ t_1,\dots,t_r  \} \subset [t-1]$:\\ 
times when the random walk steps along a previously rewired edge, 
\item $x_{[0,t-1]} \coloneqq \left(x_0, \dots, x_{t-1}\right)$:\\ 
half-edges the random walk visits up to time $t-1$.
\item $\bar{x}_{[0,t-1]} \coloneqq \left(\bar{x}_0, \dots, \bar{x}_{t-1}\right)$:\\ 
half-edges $\bar{x}_i$ that are paired with half-edges $x_i \in x_{[0,t-1]}$ in the initial configuration.
\item $\hat{x}_{[r]} \coloneqq \left(\hat{x}_1, \dots, \hat{x}_r\right)$:\\ 
half-edges that are paired with $x_{i-1}$ at times $ i \in T$.
\item $\tilde{x}_{[r]} \coloneqq \left(\tilde{x}_1, \dots, \tilde{x}_r\right)$:\\ 
half-edges that are paired with $\hat{x}_1,\dots,\hat{x}_r$ in the initial configuration.
\end{itemize}
\end{remark}

With these definitions in hand, we can now state the conditions on the random graph dynamics:

\begin{condition}[Regularity of graph dynamics]\label{cond-regularity-dynamics}
Recall that $I_t$ is the indicator of the event that a random walk steps over a rewired edge at time~$t$ (see Definition~\ref{def:stopping}). For all $t = O(\log n)$ and all $T = \{t_1,\dots,t_r\}\subset[t-1]$ the following conditions hold (note that $I_t$ is random given $(I_s)_{0 < s < t}$): 
\begin{itemize}
\customitem{(D1)}\label{cond-regularity-dynamics-D1}
For all $\allx$ and $\ally$ that describe dynamically self-avoiding histories with respect to $T$, 
\begin{equation}
\begin{aligned}
&\big|\prob(I_t = 1\mid \eventDSA)\\
&\qquad -\prob(I_t = 1\mid \eventDSAy)\big| = o(\tfrac{1}{\log n}),
\end{aligned}
\end{equation}
where the bound is \emph{uniform} in the histories.
\customitem{(D2)}\label{cond-regularity-dynamics-D2}
For all $\allx$ that describe dynamically self-avoiding histories with respect to $T$,
\begin{equation}
\|\prob(C_t(x_{t-1})\in\cdot\mid\eventDSA\cap\{I_t = 1\}) - U_H(\cdot)\|_{\TV} = o(\tfrac{1}{\log n}),
\end{equation} 
where the bound is \emph{uniform} in the histories.
\end{itemize}
In case \ref{cond-regularity-dynamics-D1} and \ref{cond-regularity-dynamics-D2} cannot be verified for \emph{all} sets of sequences that describe a dynamically self-avoiding history with respect to $T$, the following suffices:
\begin{itemize}
\customitem{(D3)}\label{cond-regularity-dynamics-D3}
If \ref{cond-regularity-dynamics-D1} and \ref{cond-regularity-dynamics-D2} hold for $\allx$ and $\ally$ that describe dynamically self-avoiding histories with respect to $T$ for which $x_{[0, t-1]}$ and $y_{[0, t-1]}$ are \emph{good}, but not necessarily hold for those histories for which $x_{[0, t-1]}$ and $y_{[0, t-1]}$ are bad, then the path $X_{[0, t]}$ of the random walk on the dynamic random graph traced until time $t$ satisfies:
\begin{equation}
\prob\left( X_{[0, t-1]}\text{ is bad}\,\right) = o(\tfrac{1}{\log n}).
\end{equation}
\end{itemize}
\end{condition}

\noindent
Part \ref{cond-regularity-dynamics-D1} states that the times at which the random walk steps over a rewired edge are almost independent of the fine details of the random walk, provided it has a good dynamically self-avoiding history. Part \ref{cond-regularity-dynamics-D2} states that a random walk with a good dynamically self-avoiding history is close to being mixed right after it steps over a rewired edge. The error terms of order $o(1/\log n)$ are chosen such that we can carry out the estimates in Lemma~\ref{mainlemma} below. Part~\ref{cond-regularity-dynamics-D3} ensures that good dynamically self-avoiding histories are typical.

To identify the scaling of the mixing time for near-to-global rewiring in Corollary~\ref{nearrwmixing}, we need an extra regularity condition:

\begin{condition}[Regularity of degree distribution]
\label{cond:secmom}
Let $p_n := \frac{1}{n} \sum_{v \in V} \delta_{\deg(v)}$ denote the empirical degree distribution. We require that:
\begin{itemize}
\customitem{(R1*)}\label{cond:secmom-R1}
$\lim_{n\to\infty} p_n = p$, pointwise for some probability distribution $p$ on $\N$.
\customitem{(R2*)}\label{cond:secmom-R2}
$\lim_{n\to\infty} \sum_{m\in\N} mp_n(m) = \sum_{m\in\N} mp(m) < \infty$. 
\customitem{(R3*)}\label{cond:secmom-R3}
$\lim_{n\to\infty} \sum_{m\in\N} m^2p_n(m) = \sum_{m\in\N} m^2p(m) < \infty$.
\end{itemize}

\noindent
The size-biased mean minus one of $p$ is
\begin{equation}
\label{nudef}
\nu = \frac{\sum_{m\in\N} m(m-1)p(m)}{\sum_{m\in\N} mp(m)}
\end{equation} 
and is assumed to satisfy $\nu>1$. 
\end{condition}

\noindent
We may interpret $\nu$ as the \emph{average forward degree} of a uniformly chosen half-edge, which plays the role of the mean offspring in the branching-process approximation of the local limit of the configuration model. In view of Condition~\ref{cond-regularity-graph}\ref{cond-regularity-graph-R3}, the condition $\nu>1$ amounts to the requirement $p \neq \delta_2$.


\subsection{Modified random walk}
\label{sec:modrw}

We define a \emph{modified random walk}, denoted by $(Y_t)_{t\in\N}$, as a random walk on a static random graph that at certain random times makes uniform jumps. Formally, we have a sequence $(J_t)_{t\in\N}$ of random variables adapted to a filtration $(\mathcal{F}_t)_{t\in\N}$, taking values in $\{0,1\}$ according to a pre-specified distribution on $\{0,1\}^\N$. For fixed $t\in\N$, $J_t$ is seen as the indicator of the event that the modified random walk makes a uniform jump at time $t$. The law of the modified random walk $(Y_t)_{t\in\N}$ on $\xi$ that starts from the initial half-edge $x\equiv Y_0$, which is adapted to $(\mathcal{F}_t)_{t\in\N}$, is given by the conditional probabilities
\begin{equation}
\label{MRWT}
\begin{aligned}
&\pmod_{x,\xi}(Y_t = z\mid Y_{t-1} = y, J_1 = j_1, \dots, J_t = j_t) \\
&= \pmod_{x,\xi}(Y_t = z\mid Y_{t-1} = y, J_t = j_t) = 
\begin{cases}	
P_{\xi}(y,z) & \text{if }j_t = 0, \\
\tfrac{1}{\sizeH} & \text{if }j_t = 1,
\end{cases}\quad t\in\N,
\end{aligned}
\end{equation}
with
\begin{equation}
\pmod_{x,\xi}(Y_0 = x) = 1.
\end{equation}
Note that, according to the definition, neither $(J_t)_{t\in\N}$ nor the pair $(Y_t,J_t)_{t\in\N}$ needs to be Markov, but $(Y_t)_{t\in\N}$ is Markov conditionally on a realisation of $(J_t)_{t\in\N}$.

Uniform jumps of the modified random walk can be rephrased in the following form. Let $Y'_t$ be a uniformly chosen half-edge, independent of the random walk path and the jump times. If $J_t = 1$, then we choose a uniform sibling of $Y'_t$, say $y$, and set $Y_t = y$. Since $Y'_t$ is uniform and one of its siblings is chosen uniformly at random, the resulting half-edge is distributed uniformly on $H$. Even though $Y^\prime_t$ is already a half-edge chosen uniformly at random, working with its sibling (which is also a half-edge chosen uniformly at random) will come in handy in the coupling argument in Section~\ref{sec:coupling}.

As an analogue of $\tau$, we define $\sigma$ to be the first time that the modified random walk makes a uniform jump, i.e.,
\begin{equation}
\label{eq:sigma}
\sigma \coloneqq \inf\{t\in\N: J_t = 1\}.
\end{equation}


\subsection{Coupling of modified and dynamically rewired random walk} 
\label{sec:coupling}

We couple the law $\prob_{x,\xi}(X_t\in\cdot)$ of the random walk on the dynamic random graph, with initial half-edge $x$ and initial configuration $\xi$, to the law $\pmod_{x,\xi}(Y_t\in\cdot)$ of the modified random walk. We want the coupled random walks to \emph{stick together as much as possible}. When the two random walks make different steps, we say that the coupling of the two random walks has \emph{failed}. Until the coupling fails, the times at which the random walk on the dynamically rewired graph makes a step over a previously rewired edge correspond to the times at which the modified random walk makes a uniform jump.

\begin{definition}[Coupling to a modified random walk]
\label{def:coupling}
Let $X_t$ be a non-backtracking random walk starting in the initial state $(x, \xi)$, where $x\in H$, $\xi \in \Conf_H$, and $Y_t$ be a modified random walk on $\xi$ starting in~$x$. First, define a sequence of auxiliary random sets $\left(A_t\right)_{t\in\N_0}$. Call $A_t$ the set of \emph{active} half-edges at time $t$. Let $A_0$ be the set consisting of the initial half-edge of the random walk and its siblings, i.e., $A_0 \coloneqq H_{v(x)}$. 

Define the coupling of the non-backtracking random walk $X_t$ and the modified random walk $Y_t$ at any time $t\in\N$ by the following rules:
\begin{enumerate}
\item If $\xi(X_{t-1})$ or any of its siblings belong to $A_{t-1}$, then declare the coupling as \textbf{failed}.
\label{coupling:fail3}
\item If $\deg_H(X_{t-1}) > \maxHdeg$ (recall that $n \coloneqq |V|$), then declare the coupling as \textbf{failed}. If Condition~\ref{cond-regularity-dynamics}\ref{cond-regularity-dynamics-D3} is not needed, then this rule is suspended (see Remark~\ref{rem:suspendfail4} below for further details).
\label{coupling:fail4}
\item If the coupling has not yet failed, then maximally couple the distribution of $I_t$, conditionally on the history of the random walk and the rewired edges seen by the random walk, to the distribution of $J_t$, conditionally on the values of the indicators $J_1,\dots,J_{t-1}$. The following three outcomes are possible:
\begin{enumerate}
\item 
If the coupling of the conditional distributions of $I_t$ and $J_t$ is successful and $I_t = J_t = 0$, then create $A_t$ as a union of $\xi(X_{t-1})$ and all its siblings with $A_{t-1}$. Let the random walk on a dynamic graph make a move and set $Y_t \coloneqq X_t$.
\item 
If the coupling of the conditional distributions of $I_t$ and $J_t$ is successful and $I_t = J_t = 1$, then maximally couple the distribution of $C_t(X_{t-1})$, i.e., the half-edge paired with $X_{t-1}$ in configuration $C_t$, conditionally on the history of the random walk and $I_t = 1$, to the distribution of $Y'_t$:
\begin{enumerate}
\item 
If the coupling of $C_t(X_{t-1})$ and $Y'_t$ is successful, and neither $C_t(X_{t-1})$ nor any of its siblings is already contained in $A_{t-1}$, then add $\xi(X_{t-1})$ and all its siblings, along with $C_t(X_{t-1})$ and all its siblings, to $A_{t-1}$ in order to obtain $A_t$. Phrased in symbols:
\begin{align}
\nonumber A_t \coloneqq A_{t-1} 
&\cup \xi(X_{t-1}) \cup \{ h \in H\colon\, h \sim \xi(X_{t-1}) \} \\ 
&\cup C_t(X_{t-1}) \cup \{ h \in H\colon\, h \sim C_t(X_{t-1}) \}.
\end{align}

Let the random walk on the dynamic graph make a move, and set $Y_t \coloneqq X_t$.
\item 
Otherwise, declare the coupling as \textbf{failed}. 
\label{coupling:fail1}
\end{enumerate}
\item 
If the coupling of the conditional distributions of $I_t$ and $J_t$ is not successful, namely if  $I_t \neq J_t$, then declare the coupling of the two random walks as \textbf{failed}. 
\label{coupling:fail2}
\end{enumerate}
\item 
If the coupling has failed let $X_t$ and $Y_t$ evolve independently.
\end{enumerate}
\end{definition}

\begin{remark}[Failure of the coupling after a high-degree half-edge is encountered]
\label{rem:suspendfail4}
In Lemma \ref{mainlemma} we will see that failure of the coupling as described in item~\ref{coupling:fail4} above is needed only when Condition~\ref{cond-regularity-dynamics}\ref{cond-regularity-dynamics-D3} comes into play. This will only happen for one of the three examples in Section~\ref{sec:ex}, namely, near-to-global. 
\end{remark}


\subsection{Failures in the coupling}
\label{sec:fail}

\begin{remark}[Possible failures]
\label{rem:failevent}
At each time $t\in\N$, the random walk and the coupled modified random walk try to avoid stepping on the active half-edges $A_{t-1}$. The coupling of these two random walks fails in four cases described in Definition~\ref{def:coupling}:
{\renewcommand{\theenumi}{\Roman{enumi}}
\renewcommand{\theenumii}{\Alph{enumii}}
\renewcommand{\labelenumii}{\theenumii.}
\begin{enumerate}
\item In step~\ref{coupling:fail1}:
\begin{enumerate}
\item 
if the coupling of $C_t(X_{t-1})$ and $Y'_t$ is not successful,
\label{rem:fail1a}
\item 
if the two random walks step over a half-edge in $A_{t-1}$. 
\label{rem:fail1b}
\end{enumerate}
\item 
In step~\ref{coupling:fail2}, if the coupling of $I_t$ and $J_t$ is not successful. 
\label{rem:fail2}
\item 
In step~\ref{coupling:fail3}, if the pair of $X_{t-1}$ in the starting configuration is already in $A_{t-1}$.
\label{rem:fail3}
\item
In step~\ref{coupling:fail4}, if the random walk encounters a half-edge $X_{t-1}$ with a high degree.
\label{rem:fail4}
\end{enumerate}
}
\noindent
Failure cases \ref{rem:fail1b} and \ref{rem:fail3} correspond to the situation in which the random walks do not have dynamically self-avoiding histories. Consequently, \emph{the random walks have dynamically self-avoiding histories before the coupling of the two random walks fails.} Failure case \ref{rem:fail1a} corresponds to the situation in which the conditional distribution of $C_t(X_{t-1})$ is too far from the uniform distribution in total variation distance. Failure case \ref{rem:fail2} corresponds to the situation in which the conditional distribution of the times at which the random walk on the dynamically rewired graph and the conditional distribution of the times at which the modified random walk makes uniform jumps are far from each other in total variation distance. Finally, failure case~\ref{rem:fail4} corresponds to the situation when during the graph exploration the random walk encounters a half-edge with an anomalously high degree.
\end{remark}

The next lemma states that these failure events are unlikely up to logarithmic times when Conditions~\ref{cond-regularity-graph} and \ref{cond-regularity-dynamics} hold for the random walk on the dynamically rewired random graph:

\begin{lemma}[Coupling estimates]
\label{mainlemma}
Suppose that $t = O(\log n)$, and that Conditions~\ref{cond-regularity-graph} and \ref{cond-regularity-dynamics} hold for the random walk on the dynamically rewired graph. For all $1 \leq s \leq t$ and all $T_s = \{s_1,\dots,s_r\}\subset[s-1]$, fix a sequence of half-edges
\begin{equation}
x^{T_s}_{[0,s-1]},\bar{x}^{T_s}_{[0,s-1]},\hat{x}^{T_s}_{[r]},\tilde{x}^{T_s}_{[r]}
\end{equation}
that describes a good dynamically self-avoiding history with respect to $T_s$ (see Definition~\ref{def:DSA}). Consider the modified random walk for which the jump distribution has conditional distribution
\begin{align}
\label{jumpdist}
&\pmod_{x,\xi}\big(J_s = 1\mid J_{s'} = 0\text{ for }s'\in[s-1]\setminus T_s, J_{s''} = 1\text{ for }s''\in T_s\big) \nn \\
&\qquad \coloneqq \prob_{x,\xi}\Big(I_s = 1\mid 
\mathsf{DSA}\big(T_s, x^{T_s}_{[0,s-1]}, \bar{x}^{T_s}_{[0,s-1]}, \hat{x}^{T_s}_{[r]}, \tilde{x}^{T_s}_{[r]}\big)\Big).
\end{align}
Then, whp in $x$ and $\xi$,
\begin{equation}
\|\prob_{x,\xi}(X_t\in\cdot) - \pmod_{x,\xi}(Y_t\in\cdot)\|_{\TV} = o_{\sss\prob}(1),
\end{equation}
and, with $\sigma$ as defined in \eqref{eq:sigma},
\begin{equation}
\prob_{x,\xi}(\tau > t) = \pmod_{x,\xi}(\sigma > t) + o_{\sss\prob}(1).
\end{equation}
\end{lemma}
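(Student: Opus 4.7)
The plan is to run the coupling of Definition~\ref{def:coupling} and show that it fails with probability $o_{\sss\prob}(1)$ on the time scale $t = O(\log n)$. By construction, on the complement $F_t^c$ of the failure event $F_t$ up to time $t$ we have $X_s = Y_s$ for every $0 \leq s \leq t$ and $I_s = J_s$ for every $1 \leq s \leq t$, and in particular $\{\tau > t\} = \{\sigma > t\}$ on $F_t^c$. Hence
\begin{equation}
\|\prob_{x,\xi}(X_t\in\cdot) - \pmod_{x,\xi}(Y_t\in\cdot)\|_{\TV} \leq \prob_{x,\xi}(F_t), \qquad \bigl|\prob_{x,\xi}(\tau>t) - \pmod_{x,\xi}(\sigma>t)\bigr| \leq \prob_{x,\xi}(F_t),
\end{equation}
so both statements of the lemma reduce to showing that $\prob_{x,\xi}(F_t) = o_{\sss\prob}(1)$ whp in $x$ and $\xi$.

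Classifying failures as in Remark~\ref{rem:failevent} and applying a union bound over $s=1,\dots,t$, it is enough to bound each per-step failure probability by $o(1/\log n)$. Mode~II (the coupling of $I_s$ and $J_s$ breaks) is controlled by~\eqref{jumpdist} together with Condition~\ref{cond-regularity-dynamics}\ref{cond-regularity-dynamics-D1}. Mode~I.A (the coupling of $C_s(X_{s-1})$ with the uniform target $Y'_s$ breaks) is controlled directly by Condition~\ref{cond-regularity-dynamics}\ref{cond-regularity-dynamics-D2}. Mode~IV (encountering a half-edge of degree exceeding $\maxHdeg$) is either vacuous, when Conditions~\ref{cond-regularity-dynamics}\ref{cond-regularity-dynamics-D1}--\ref{cond-regularity-dynamics-D2} hold for every dynamically self-avoiding history, or else controlled by Condition~\ref{cond-regularity-dynamics}\ref{cond-regularity-dynamics-D3}, which simultaneously justifies restricting the analysis of (D1) and (D2) to good histories.

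The main work is with the self-avoidance failures (modes I.B and III). On the event that the coupling has survived up to step $s-1$, the walk has traversed a good dynamically self-avoiding history, so the at most $O(s)$ vertices in the active set $A_{s-1}$ contribute $|A_{s-1}| = O(s\,\maxHdeg)$ half-edges (by construction $A_{s-1}$ is closed under taking siblings). A failure at step~$s$ requires that the next pair---either $\xi(X_{s-1})$ when $I_s=0$, or $C_s(X_{s-1})$ when $I_s=1$---fall into $A_{s-1}$. For $I_s=1$, Condition~\ref{cond-regularity-dynamics}\ref{cond-regularity-dynamics-D2} replaces the conditional law of $C_s(X_{s-1})$ by $U_H$ at cost $o(1/\log n)$. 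For $I_s=0$, a standard configuration-model exploration argument shows that, conditional on the previously revealed pairings, $\xi(X_{s-1})$ is uniform on the $|H|-O(s)$ half-edges not yet exposed. Either way, the probability that the next pair lands in $A_{s-1}$ is at most $O(|A_{s-1}|/|H|) = O(s\,\maxHdeg/n)$, which for $s \leq t = O(\log n)$ is $o(1/\log n)$ by Conditions~\ref{cond-regularity-graph}\ref{cond-regularity-graph-R1}--\ref{cond-regularity-graph-R2}.

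Combining the four per-step contributions via the union bound over $s \leq t = O(\log n)$ yields $\prob_{x,\xi}(F_t) = o_{\sss\prob}(1)$ whp in $x$ and $\xi$, which proves both claims. The principal technical obstacle is the self-avoidance estimate: one must track the dynamic exploration of $\xi$---pairings revealed both through the non-backtracking walk and through the rewiring rule---and verify that the unexposed reservoir of half-edges stays of size $|H| - o(|H|)$ throughout, so that the approximate uniformity of the next pair can be quantified with per-step error $o(1/\log n)$. The restriction to good histories enforced by Condition~\ref{cond-regularity-dynamics}\ref{cond-regularity-dynamics-D3} is precisely what makes $\maxHdeg$, rather than $\dmax$, appear in the final bound, which is essential since only the former is $\mathrm{poly}(\log n)$.
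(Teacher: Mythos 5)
Your overall route is the same as the paper's: run the coupling of Definition~\ref{def:coupling}, reduce both claims to a bound on the probability that the coupling fails by time $t$, classify failures as in Remark~\ref{rem:failevent}, control the failures of the $I_s/J_s$ and $C_s(X_{s-1})/Y'_s$ couplings by Conditions~\ref{cond-regularity-dynamics}\ref{cond-regularity-dynamics-D1}--\ref{cond-regularity-dynamics-D2} (with \ref{cond-regularity-dynamics-D3} covering the high-degree failure), bound the collision failures through the size of the active set, and conclude with a union bound over $s\leq t=O(\log n)$. However, one step of your argument is wrong as stated. The bound $|A_{s-1}|=O(s\,\maxHdeg)$ is not justified by goodness of the history: goodness only constrains $\Hdeg(x_i)$ for the half-edges $x_{[0,t-1]}$ actually occupied by the walk. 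At a rewiring step ($I_s=1$) the coupling adds to the active set both $C_s(X_{s-1})$ with its siblings (these sit at the vertex the walk moves to, so they are controlled for good histories) \emph{and} $\xi(X_{s-1})=\bar x_{s-1}$ with its siblings; after a rewiring the walk does not move to $v(\bar x_{s-1})$, so that vertex need not lie on the path and its degree is bounded only by $\dmax$. The paper therefore uses the crude bound $|A_{s-1}|\leq 3s\dmax$ and obtains the per-step collision probability $o(1/\log n)$ from Condition~\ref{cond-regularity-graph}\ref{cond-regularity-graph-R2}, $\dmax=o(n/(\log n)^2)$ --- this is precisely what \ref{cond-regularity-graph-R2} is in the hypotheses for. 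Consequently your closing claim, that goodness/\ref{cond-regularity-dynamics-D3} is what makes $\maxHdeg$ rather than $\dmax$ appear in the bound and that this is essential, is incorrect: $\dmax$ together with \ref{cond-regularity-graph-R2} suffices, and \ref{cond-regularity-dynamics-D3} is needed for a different reason, namely to be allowed to invoke \ref{cond-regularity-dynamics-D1}--\ref{cond-regularity-dynamics-D2} only on good histories (relevant for the near-to-global example), the high-degree failure itself costing $o(1/\log n)$ directly by \ref{cond-regularity-dynamics-D3}. Your proof is repaired simply by replacing $\maxHdeg$ by $\dmax$ in the active-set estimate and citing \ref{cond-regularity-graph-R2}.

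A smaller omission: the statement that, conditionally on the previously revealed pairings, $\xi(X_{s-1})$ is uniform over the roughly $\sizeH-O(s)$ unexposed half-edges only makes sense after averaging over the initial data. The paper makes this explicit through an annealing step: it bounds the annealed failure probability $\pcouple(F\leq t)=o(1)$, with $\pcouple=\sum_{x,\xi}\mu(x,\xi)\,\pcouple_{x,\xi}$ and $\mu=U_H\times\UConf$, by exploring $C_0$ along the coupled walks, and then deduces the quenched statement $\pcouple_{x,\xi}(F\leq t)=o_{\sss\prob}(1)$ whp in $(x,\xi)$. You gesture at this (\enquote{exploration of $\xi$}, your \enquote{principal technical obstacle}), but since the lemma is a whp statement about a fixed pair $(x,\xi)$, the passage from the annealed to the quenched bound should be stated explicitly.
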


\begin{remark}[Jump distribution of the modified random walk]
Observe that \eqref{jumpdist} describes the jump distribution of the modified random walk at any time $1 \leq s \leq t$ for any set of previous jump times $T_s$ in a \emph{non-anticipating} manner. If the sequence of half-edges in the event $\mathsf{DSA}(T_s, x^{T_s}_{[0,s-1]}, \bar{x}^{T_s}_{[0,s-1]}, \hat{x}^{T_s}_{[r]}, \tilde{x}^{ T_s}_{[r]})$ in the right-hand side of \eqref{jumpdist} is not compatible with the initial state $(x,\xi)$ (i.e., when the conditioning is on an event of probability zero), then we set the right-hand side of \eqref{jumpdist} equal to zero. The proof below uses an annealing argument in which the \enquote{mismatched} events play no role.
\end{remark}

\begin{proof}[Proof of Lemma~\ref{mainlemma}]
Let $\pcouple_{x,\xi}$ denote the law of the coupling of the two non-backtracking random walks described in Section~\ref{sec:modrw} with $X_0 = x$ and $C_0 = \xi$. Also, use $F \in \N$ to denote the time at which this coupling fails. Due to Condition~\ref{cond-regularity-graph}\ref{cond-regularity-graph-R3}, these random walks are always well-defined. Since the two random walks agree up to the time $F$, that is until the coupling fails, we have
\begin{equation}
\|\prob_{x,\xi}(X_t\in\cdot) - \pmod_{x,\xi}(Y_t\in\cdot)\|_{\TV} \leq \pcouple_{x,\xi}(F \leq t).
\end{equation}
So, in order to prove the claim it suffices to show that, whp in $x$ and $\xi$,
\begin{equation}
\pcouple_{x,\xi}(F \leq t) = o_{\sss\prob}(1).
\end{equation}

To achieve this, we use an annealing argument on the initial graph and the initial location. Recall that $\mu = U_H\times\UConf$, and let
\begin{equation}
\pcouple = \sum_{x,\xi}\mu(x,\xi)\,\pcouple_{x,\xi}.
\end{equation}
We will show that
\begin{equation}\label{annealedfail}
\pcouple(F \leq t) = o(1)
\end{equation}
by exploring the initial configuration using the paths of the random walk and its coupled modified random walk until the coupling fails at time~$F$.

\begin{enumerate}
\item At time $s=0$, choose a half-edge $x \in H$ uniformly at random. Set $X_0 = Y_0 = x$ and $A_0 = H_{v(x)}$, the subset of $H$ consisting of $x$ and its siblings.
\item At time $s\in\N$, first explore the half-edge to which $X_{s-1} = Y_{s-1}$ is paired in the initial configuration $\xi$, then let the coupled random walks evolve in accordance with Definition~\ref{def:coupling}, and update $A_s$ accordingly.
\end{enumerate}
This exploration process covers the part of the graph seen by the random walks, along with the parts affected by the rewiring at the positions of the random walks, and stops as soon as the coupling of the two random walks fails.

We will carry out the proof in a setting where Conditions~\ref{cond-regularity-dynamics}\ref{cond-regularity-dynamics-D1} and \ref{cond-regularity-dynamics-D2} hold. At the end of the proof we will briefly comment on the changes required when Condition~\ref{cond-regularity-dynamics}\ref{cond-regularity-dynamics-D3} comes into play.

Suppose that the coupling of the two random walks has not failed before time $s$. Failure at time $s$ can occur in the following three cases (see also Remark~\ref{rem:failevent}):
\begin{enumerate}
\item The coupling of $I_s$ and $J_s$ fails in step \ref{coupling:fail2} of Definition \ref{def:coupling}.
\label{case2}
\item The coupling of $C_s(X_{s-1})$ and $Y'_s$ fails in step \ref{coupling:fail1} of Definition \ref{def:coupling}.
\label{case3}
\item The random walks jointly step over a half-edge that lies in $A_{s-1}$ in either step~\ref{coupling:fail1} or step~\ref{coupling:fail3} of Definition \ref{def:coupling}.
\label{case4}
\end{enumerate}

For case~\ref{case2}, we note that, since the distribution of $J_t$ for the modified random walk is given by \eqref{jumpdist}, Condition~\ref{cond-regularity-dynamics}\ref{cond-regularity-dynamics-D1} implies that the probability of coupling failure is $o(1/\log n)$. 

For case~\ref{case3} we note that, by Remark~\ref{rem:failevent}, before the coupling of the two random walks fails, the random walk has a dynamically self-avoiding history. By Condition~\ref{cond-regularity-dynamics}\ref{cond-regularity-dynamics-D2}, the total variation distance between the conditional distribution of $C_s(X_{s-1})$ and the uniform distribution $U_H$ is $o(1/\log n)$. Since $Y'_s$ is also distributed uniformly on $H$, the probability of the event in case 2 is $o(1/\log n)$.

For case~\ref{case4}, we first need an upper bound on the size of $A_{s-1}$. Each time we explore the initial configuration, we add at most $\dmax$ half-edges to the set of active half-edges. In case a rewiring occurs, then we add at most $2\dmax$ half-edges to the set of active half-edges. This gives us the following crude bound:
\begin{equation}
|A_{s-1}| \leq 3s\dmax. \end{equation}
For a fail event in step~\ref{coupling:fail1}, we see that the probability that $C_s(X_{s-1})\in A_{s-1}$ is smaller than
\begin{equation}
\frac{|A_{s-1}|}{\sizeH}+o(1/\log n) \leq \frac{3s\dmax}{\sizeH}+o(1/\log n),
\end{equation}
since the random walk has a dynamically self-avoiding history before the coupling of the two random walks fails (see Remark~\ref{rem:failevent}), so the total variation distance between the conditional distribution of $C_s(X_{s-1})$ and the uniform distribution $U_H$ is $o(1/\log n)$, by Condition~\ref{cond-regularity-dynamics}\ref{cond-regularity-dynamics-D2}.

For a fail event in step~\ref{coupling:fail3}, we see that the probability that $C_0(X_{s-1})\in A_{s-1}$ is smaller than
\begin{equation}
\frac{|A_{s-1}|}{\sizeH-4s+4} \leq \frac{3s\dmax}{\sizeH-4s+4},
\end{equation}
since up to time $s$ we form at most $2s-2$ pairs in $C_0$, of which $s-1$ on the random walk path and an additional $s-1$ if rewiring occurs at each step up to time $s$.

The above estimates give us
\begin{equation}
\pcouple(F = s\mid F > s-1) \leq \frac{6s\dmax}{\sizeH-4s+4}+o\Big(\frac{1}{\log n}\Big).
\end{equation}
Taking a union bound up to time $t$, and using that by assumption $t = O(\log n)$, $\dmax = o(n/(\log n)^2)$ (Condition~\ref{cond-regularity-graph}\ref{cond-regularity-graph-R2}) and $\sizeH = \Theta(n)$ (Condition~\ref{cond-regularity-graph}\ref{cond-regularity-graph-R1}), we get
\begin{equation}
\pcouple(F \leq t) \leq \frac{3t(t+1)\dmax}{\sizeH-4t}+o(1) = o(1),
\end{equation}
which in turn implies that,
\begin{equation}
\pcouple_{x,\xi}(F\leq t) = o_{\sss\prob}(1).
\end{equation}

In case we rely on Condition~\ref{cond-regularity-dynamics}\ref{cond-regularity-dynamics-D3}, a fourth possible failure of the coupling shows up, namely, if the random walk encounters a half-edge of degree larger than $\maxHdeg$. The probability of this failure is $o(1/\log n)$ by Condition~\ref{cond-regularity-dynamics}\ref{cond-regularity-dynamics-D3}. The estimates for the other possible failures carry over, because if the coupling did not fail at some time $s$ due to a meeting with a high-degree half-edge, then the random walk path traced up to time $s$ is good and we can apply the same arguments as above.
\end{proof}


\subsection{Link between dynamic and static}
\label{sec:proofmainthm3}

In this section we prove Theorem~\ref{thm:main}. Consider the modified random walk in the statement of Lemma \ref{mainlemma} and sample uniform jump times up to time $t$. For any fixed $T = \{t_1,\dots,t_r\}\subset[t]$, we see that the modified random walk conditionally on the event $J(T) \coloneqq \{J_s = 0\text{ for } s\in[t]\setminus T, J_s = 1\text{ for } s\in T\}$ is a time-inhomogeneous Markov chain that makes random-walk steps at times $s\in[t]\setminus T$ and jumps to half-edges chosen uniformly at random at times $s\in T$.

Conditionally on $T \subset [t]$ being non-empty, it is obvious that at time $t$ the random walk on a graph satisfying Condition~\ref{cond-regularity-graph} is well-mixed for any starting $x \in H, \xi \in \Conf_H$ and so we claim that 
\begin{equation}
\pmod_{x,\xi}(Y_t\in\cdot\mid J(T)) = U_H(\cdot),
\end{equation}
and since $J(T)$, $\emptyset \neq T \subset [t]$ by definition implies $\sigma \leq t$, we also get
\begin{equation}
\label{pmodstopped}
\pmod_{x,\xi}(Y_t\in\cdot\mid\sigma\leq t) = U_H(\cdot).
\end{equation}
On the other hand, since the modified random walk up to time $t$ conditionally on the event $\{\sigma > t\}$ is the same as the random walk on the static graph, for any $x\in H$ and $\xi\in\Conf_H$, we have
\begin{equation}
\label{pmodunstopped}
\|\pmod_{x,\xi}(Y_t\in\cdot\mid\sigma>t) - U_H(\cdot)\|_{\TV} = \Dstat_{x,\xi}(t).
\end{equation}
Using the triangle inequality twice, we obtain
\begin{align}
\|\pmod_{x,\xi}(Y_t\in\cdot) - U_H(\cdot)\|_{\TV}\leq& \pmod_{x,\xi}(\sigma > t)\|\pmod_{x,\xi}(Y_t\in\cdot\mid\sigma>t) - U_H(\cdot)\|_{\TV} \nn \\
&+ \pmod_{x,\xi}(\sigma \leq t)\|\pmod_{x,\xi}(Y_t\in\cdot\mid\sigma\leq t) - U_H(\cdot)\|_{\TV},
\end{align}
and
\begin{align}
\|\pmod_{x,\xi}(Y_t\in\cdot) - U_H(\cdot)\|_{\TV}\geq& \pmod_{x,\xi}(\sigma > t)\|\pmod_{x,\xi}(Y_t\in\cdot\mid\sigma>t) - U_H(\cdot)\|_{\TV} \nn \\
&- \pmod_{x,\xi}(\sigma \leq t)\|\pmod_{x,\xi}(Y_t\in\cdot\mid\sigma\leq t) - U_H(\cdot)\|_{\TV}.
\end{align}
Inserting \eqref{pmodstopped} and \eqref{pmodunstopped}, we obtain
\begin{equation}
\|\pmod_{x,\xi}(Y_t\in\cdot) - U_H(\cdot)\|_{\TV} = \pmod_{x,\xi}(\sigma > t)\,\Dstat_{x,\xi}(t).
\end{equation}
Now using Lemma \ref{mainlemma}, we see that, whp in $x$ and $\xi$,
\begin{equation}
\Ddyn_{x,\xi}(t) = \prob_{x,\xi}(\tau > t)\,\Dstat_{x,\xi}(t) + o_{\sss\prob}(1),
\end{equation}
which concludes the proof of Theorem~\ref{thm:main}. \qed


\section{Examples of admissible dynamics}
\label{sec:ex}

In Section~\ref{sec:threerew} we introduce three choices of rewiring. In Sections~\ref{sec:locrrw}--\ref{sec:globrrw} we identify, for each of these choices, the scaling of the probability that the random walk does not step along a previously rewired edge, which settles Theorem~\ref{thm:notmain}. 

In Appendix~\ref{appA} we show that each of the three choices of rewiring leads to an irreducible and aperiodic joint Markov chain for the random walk and the random graph.


\subsection{Three choices of rewiring}
\label{sec:threerew}

We explore rewirings that fit into a larger scheme of random graph dynamics, namely, where the decision which edges to rewire depends on their distance to the current position of the random walk. 

\begin{definition}[Sets of edges to be rewired]
\label{def:edgesets}
Recall that the configuration $\xi$ is a pairing of all the half-edges (which induces a set of edges) and $H$ is the set of all half-edges. By abuse of notation, in Section~\ref{sec:intro:modelnotation} we introduced the expression $\{a,b\}\in\xi$, $a,b\in H$, to mean that the half-edges $a,b$ form an edge in the configuration $\xi$. For any $\xi \in \Conf_H$, $h \in H$ and $r_n \in \N$, define the following sets of edges:
\begin{equation}
\begin{aligned}
\localset{\xi}{h}
&:= \{ \{h,g\} \in \xi \},\\
\nearset{\xi}{h}{r_n}
&:= \left\{ \{k,l\}\in \xi\colon\,
\begin{array}{l}
k\in H \colon\, \prob(X_{t+\rho} = k \mid X_{t-1} = h,\, \xi\text{ fixed}) > 0 \text{ for } 0 \leq \rho < r_n,\\
l\in H \colon\, l=\xi(k)
\end{array}
\right\}.
\end{aligned}
\end{equation}
\end{definition}

\noindent
In words, $\mathrm{Local}_\xi(h)$ is the edge to which the half-edge $h$ belongs and $\mathrm{Near}_{\xi,r_n}(h)$ are the edges that can be reached in $r_n$ steps by the non-backtracking random walk when the graph is in configuration~$\xi$ (and is not evolving).

\begin{figure}[htbp]
\centering
\includegraphics[width=0.3\textwidth]{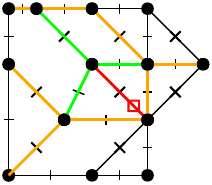}
\vspace{0.2cm}
\caption{\small Illustration of the sets in Definition~\ref{def:edgesets}. The red box denotes the current position of the random walk. The red edge forms the local set, which is also the near set with $r_n = 1$. The red and green edges form the near set with $r_n=2$. The red, green and orange edges form the near set with $r_n=3$.}
\label{fig:ball}
\end{figure}

With the above notation we can define the dynamics:

\begin{definition}[Random walk with $\left(K_t \right)$-to-$\left(L_t \right)$ rewiring]
\label{def:rewirings}
Recall that $X_{t-1}$ is the position of the random walk before the transition at time $t$ and $C_{t-1}$ is the configuration of the random graph before an update at time $t$. Let $\left(K_t \right)_{t\in\N}, \left(L_t\right)_{t\in\N}$ be sequences of sets of edges, which can be different at each time $t$. Define the random walk with $\left(K_t \right)$-to-$\left(L_t \right)$ rewiring as the following process:
\begin{enumerate}
\item 
At each time $t\in\N$, for each edge $e \in K_t$ draw a Bernoulli random variable $Z_t^{e}$ with parameter $\alpha_n$, independently of everything else.
\item 
\begin{enumerate}
\item If  $Z_t^{e} = 1$, then select edge $e$ for rewiring.
\item If $Z_t^{e} = 0$, then edge $e$ will not be rewired. 
\end{enumerate}
Write $R_t$ to denote the set of edges that get rewired at time~$t$.
\item
\begin{enumerate}
\item If $|R_t| \geq |L_t\setminus R_t|$, then break-up all the edges in $R_t \cup L_t$ into half-edges and re-pair them at random. More formally, pick $\frac12 |R_t \cup L_t|$ different half-edges (the half-edges forming $R_t \cup L_t$) and order them randomly. Also order randomly the half-edges not chosen in the previous step. The new pairing is generated by pairing the successive elements from the first and the second ordered sets described above. 
 
\item Otherwise, for every $e \in R_t$, choose $e^\prime \in L_t\setminus R_t$ uniformly at random without replacement. Denote the set of all edges $e^\prime$ chosen in the previous step by $R_t^\prime$. Break up $R_t$ into half-edges and order them randomly. Do the same with $R^\prime_t$. Just as in (a), the new pairing is given by the successive elements of the first and the second ordered set. 
\end{enumerate}

The new pairing of half-edges obtained in either (a) or (b) above is the new graph configuration~$C_t$.

\item 
The random walk moves from $X_{t-1}$ to $X_{t}$ on the evolved graph $C_{t}$.
\end{enumerate}
\end{definition}

\begin{remark}[Sets of edges generated from a configuration]\label{rem:edgeset}
When in the sequel we write $L_t \equiv \xi \in \Conf_H$, we mean that the set of edges $L_t$ is generated by the configuration $\xi$, which is a pairing of the entire set of half-edges $H$.
\end{remark}


\subsection{Local-to-global rewiring}
\label{sec:locrrw}

In this section we focus on a rewiring mechanism that is called local-to-global. Using the language of Definition \ref{def:rewirings}, this would be a rewiring with $K_t = \localset{C_{t-1}}{X_{t-1}}$ (see Definition~\ref{def:edgesets}) and $L_t \equiv C_{t-1}$ (see Remark~\ref{rem:edgeset}). Observe that the~set $K_t$ is explicitly dependent on the position of the random walk $X_{t-1}$ before the transition at time~$t$ occurs. For $\xi,\eta \in \Conf_H$ and $x \in H$, define
\begin{align}
\label{eq:QR}
Q^R_x(\xi,\eta) \coloneqq
\begin{cases}
\frac{1}{\sizeH-2} & \text{if }\xi(\eta(x))= \eta(\xi(x))\text{ and }|\xi\setminus\eta|= 2, \\
0 & \text{otherwise}.
\end{cases}
\end{align}
Then the transition matrix for the random graph from configuration $\xi$ to configuration $\eta$ when the random walk  is at position $x$ equals
\begin{equation}
\label{eq:Qdecomp}
Q_x(\xi,\eta) = (1-\alpha_n)I(\xi, \eta) + \alpha_n Q^R_x(\xi,\eta),
\end{equation}
where $I(\xi, \eta) = 1$ if $\eta = \xi$, and $I(\xi, \eta) = 0$ otherwise, i.e., $I$ is the identity matrix. The first term of \eqref{eq:QR} captures the situation when rewiring does not happen and the graph remains the same. On the other hand, the off-diagonal symmetric matrix $Q^R_x(\xi,\eta)$ in the second term represents the possible evolution of the graph by local-to-global rewiring. Note that the only possible transitions between graph states are those where the two configurations $\xi$ and $\eta$ differ in exactly two pairs of half-edges. The condition $\xi(\eta(x))= \eta(\xi(x))$ in \eqref{eq:QR} says that rewiring always happens at the position of the random walk. The value $\frac{1}{\sizeH - 2}$ comes from the fact that at time $t$ the rewiring mechanism can choose to pair the half-edge $X_t$ to any half-edge chosen randomly from $H \setminus \{X_{t-1}, C_{t-1}(X_{t-1})\}$, which is a set of size $|H| - 2$. 

Since $Q_x^R$ is symmetric for all $x\in H$, we see that the measure $\UConf$, defined by 
\begin{equation}
\UConf(\zeta) := \frac{1}{|\Conf_H|} \qquad \forall \zeta \in \Conf_H 
\end{equation}
is the stationary distribution for $Q_x^R$ for any $x\in H$. This implies that $\UConf$ is also the stationary distribution for $Q_x$ for all $x\in H$.

\begin{remark}[Symmetry of transition matrix for graph dynamics]
Local-to-global rewiring is one of the examples where the transition matrix is symmetric. Symmetry does not hold generally, even within the restricted class of \enquote{something-to-global} rewirings. Still, for such rewirings the transition matrices are always doubly stochastic. For more details see Appendix~\ref{appC}.
\end{remark}

Using this fact, we have the following result for the joint Markov chain:

\begin{proposition}[Stationary distribution]\label{prop:ltgstatdist}
For any $\alpha_n\in[0,1]$, $U_H\times\UConf$ is the stationary distribution for the random walk with local-to-global rewiring with parameter $\alpha_n$.
\end{proposition}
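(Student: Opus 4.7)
The plan is to verify directly that $\mu := U_H\times\UConf$ is invariant under the joint transition kernel
\begin{equation*}
T((y,\eta),(z,\zeta)) := Q_y(\eta,\zeta)\,P_\zeta(y,z).
\end{equation*}
The essential observation is that, although the graph-update kernel $Q_y$ depends on the walk position $y$, the distribution $\UConf$ is invariant under $Q_y(\cdot,\cdot)$ for \emph{every} fixed $y$. Combined with the double stochasticity of $P_\zeta$ (recalled in Section~\ref{ss:rw}), this makes the stationarity of $\mu$ factor cleanly despite the coupling between the two components.

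First I would establish that $\UConf$ is $Q_y$-invariant. By the decomposition \eqref{eq:Qdecomp} it suffices to treat $Q_y^R$. Inspecting \eqref{eq:QR}, the matrix $Q_y^R$ is manifestly symmetric in $(\xi,\eta)$, and a short count shows it is stochastic: the configurations reachable from $\xi$ by one local-to-global rewiring are parametrized by the choice of the half-edge to which $y$ is newly paired, of which there are $|H|-2$ possibilities, each carrying weight $1/(|H|-2)$. Being symmetric and stochastic, $Q_y^R$ has $\UConf$ as its (unique modulo communicating classes) stationary law, and hence so does the convex combination $Q_y$.

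Then I would compute, for any target state $(z,\zeta)$,
\begin{equation*}
\sum_{y,\eta}U_H(y)\UConf(\eta)\,Q_y(\eta,\zeta)\,P_\zeta(y,z)
=\sum_y U_H(y)\,P_\zeta(y,z)\,\Bigl(\sum_\eta\UConf(\eta)\,Q_y(\eta,\zeta)\Bigr).
\end{equation*}
By the first step, the inner sum equals $\UConf(\zeta)$, crucially independent of $y$, so it pulls outside; the remaining sum $\sum_y U_H(y)\,P_\zeta(y,z)$ equals $U_H(z)$ by the double stochasticity of $P_\zeta$. This yields $\mu(z,\zeta)$, as required.

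The only step that genuinely exploits the local-to-global structure is the symmetry of $Q_y^R$; if one introduced asymmetric preferences for particular half-edges (as flagged in the sixth bullet of Section~\ref{sec:disc}), $\UConf$-invariance of $Q_y$ could fail and the clean factorization above would break. In the present setting, symmetry is visible by inspection of \eqref{eq:QR}, so no real obstacle arises and the computation is essentially mechanical once the position-independence of the inner sum over $\eta$ is recognized.
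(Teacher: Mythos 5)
Your proposal is correct and follows essentially the same route as the paper: the paper also first notes that the symmetry (hence double stochasticity) of $Q^R_x$ makes $\UConf$ stationary for $Q_x$ for every fixed $x$, and then performs exactly your factorization, pulling the $\eta$-sum out because it is independent of $y$ and finishing with the double stochasticity of $P_\zeta$. No gaps; your extra remark on stochasticity of $Q^R_y$ and on where position-dependence could break the argument is consistent with the paper's discussion.
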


\begin{proof}
Recall from Section~\ref{ss:rw} that $P_\eta$ is the transition matrix for the non-backtracking random walk on the graph $\eta$. Since $U_H$ is stationary for $P_\eta$ for any $\eta\in\Conf_H$, and $\UConf$ is stationary for $Q_x$ for any $x\in H$, it follows that for any $y\in H$ and $\eta\in\Conf_H$,
\begin{equation}
\begin{aligned}
&\sum_{x\in H}\sum_{\xi\in\Conf_H}U_H(x)\UConf(\xi)\,\prob_{x,\xi}(X_1 = y, C_1 = \eta) \\
&\qquad =\sum_{x\in H}\sum_{\xi\in\Conf_H}U_H(x)\UConf(\xi)\,Q_x(\xi,\eta)P_\eta(x,y) \\
&\qquad =\sum_{x\in H}U_H(x)\,P_\eta(x,y)\sum_{\xi\in\Conf_H}\UConf(\xi)\,Q_x(\xi,\eta) \\
&\qquad =\UConf(\eta)\sum_{x\in H}U_H(x)P_\eta(x,y) = \UConf(\eta)U_H(y).
\end{aligned}
\end{equation}
\end{proof}

It is not obvious that the joint Markov chain is irreducible and aperiodic. In Appendix~\ref{appA} we show that this is nonetheless the case when $\alpha_n\in(0,1)$, and so the distribution of the joint Markov chain at time $t$ converges to $U_H\times\UConf$ as $t\to\infty$. An important implication is that the distribution of the random walk alone at time $t$ converges to $U_H$ as $t\to\infty$. Indeed, for any $x\in H$, $\xi\in\Conf_H$ and $t\in\N$, we have
\begin{equation}
\Ddyn_{x,\xi}(t) \leq \|\prob_{x,\xi}((X_t,C_t)\in\cdot)-U_H\times\UConf(\cdot)\|_{\TV},
\end{equation}
and since the right-hand side tends to $0$ as $t\to\infty$, $\Ddyn_{x,\xi}(t)$ also tends to $0$ as $t\to\infty$. On the other hand, this argument does not automatically imply that $\Ddyn_{x,\xi}(t)$ is non-increasing in $t$.

We are now ready to prove the scaling results stated in Theorem~\ref{thm:notmain}\ref{thm:notmain-A} and Corollary~\ref{locrwmixing}:

\begin{proof}
For fixed $t = O(\log n)$, fix some $T = \{t_1,\dots,t_r\}\subset[t-1]$ and some $x_{[0,t-1]}$, $\bar{x}_{[0,t-1]}$, $\hat{x}_{[r]}$ and $\tilde{x}_{[r]}$ that describe a dynamically self-avoiding history with respect to $T$. Conditionally on the event $\eventDSA$, $x_{t-1}$ cannot have been rewired before time $t$. Indeed, by construction the half-edges that are rewired before time $t$ are $x_{t_1-1},\dots,x_{t_r-1}$, $\bar{x}_{t_1-1},\dots,\bar{x}_{t_r-1}$, $\hat{x}_1,\dots,\hat{x}_r$ and $\tilde{x}_1,\dots,\tilde{x}_r$, while $x_{t-1}$ is not equal to any of these. So we have
\begin{align}
&\prob\big(I_t = 1\mid \eventDSA\big) \nn \\
&= \prob\big(Z_t^{\localset{C_{t-1}}{X_{t-1}}} = 1\mid \eventDSA\big) = \alpha_n.
\end{align}
Since this holds for any choice of $x_{[0,t-1]}$, $\bar{x}_{[0,t-1]}$, $\hat{x}_{[r]}$ and $\tilde{x}_{[r]}$, Condition~\ref{cond-regularity-dynamics}\ref{cond-regularity-dynamics-D1} holds with zero error. As a consequence, Condition~\ref{cond-regularity-dynamics}\ref{cond-regularity-dynamics-D1} is trivially satisfied. Moreover, $\prob(C_t(x_{t-1})\in\cdot\mid \eventDSA\cap\{I_t = 1\})$ is the uniform distribution on $H\setminus\{x_{t-1},$ $C_{t-1}(x_{t-1})\}$, because after rewiring the half-edge $x_{t-1}$ cannot end up being paired with itself or the half-edge it was paired with before. This gives
\begin{align}
\label{eq:2overH}
\Big\|\prob\big(C_t(x_{t-1})\in\cdot\mid \eventDSA \cap\{I_t = 1\}\big) - U_H(\cdot)\Big\|_{\TV} = \frac{2}{\sizeH}.
\end{align}
Since this holds for any choice of $x_{[0,t-1]}$, $\bar{x}_{[0,t-1]}$, $\hat{x}_{[r]}$ and $\tilde{x}_{[r]}$, Condition~\ref{cond-regularity-dynamics}\ref{cond-regularity-dynamics-D2} holds with error $O(1/n)$.  

On the other hand, the event $\{\tau = t\}$ is the same as the event $\{\min\{s\in\N: R_s = 1\} = t\}$, since when a rewiring occurs the random walk steps over a rewired edge with probability 1. This implies that, for any $x$ and $\xi$,
\begin{equation}
\prob_{x,\xi}(\tau > t \mid \eventSA{t}) = (1-\alpha_n)^t = \mathrm{e}^{-[1+o(1)]\,\alpha_n t},
\end{equation}
where $\eventSA{t}$ is the event that the random walk is self-avoiding until time $t$. The first equality comes from the requirement that none of the edges the random walk steps over until time $t$ gets rewired, the second equality uses that $\lim_{n\to\infty}\alpha_n = 0$. Since 
\begin{equation}
\label{SAWt}
\lim_{n\to\infty} \prob_{x,\xi}(\eventSA{t}) = 1 \quad \whp \text{ uniformly in } t = O(\log n),
\end{equation}
we obtain the scaling in Theorem~\ref{thm:notmain}\ref{thm:notmain-A}. (The proof of \eqref{SAWt} was given in \cite[Lemma 3.1]{AGHH20182} for global-to-global rewiring, but easily carries over to local-to-global and near-to-global rewiring.) Given Condition~\ref{cond-regularity-graph}\ref{cond-regularity-graph-R1}, we can use Corollary~\ref{maincor}, which combined with \eqref{c*def} yields Corollary~\ref{locrwmixing}.
\end{proof}


\subsection{Near-to-global rewiring}
\label{sec:nearrrw}

In this section we focus on near-to-global rewiring. In view of Definition~\ref{def:rewirings}, this is a rewiring with $K_t = \nearset{C_{t-1}}{X_{t-1}}{r_n}$ (recall Definition~\ref{def:edgesets}) and $L_t \equiv C_{t-1}$ (see Remark~\ref{rem:edgeset}) at any time $t$. Just like in the previous example, this is also a rewiring mechanism where the sets $K_t$ are dependent on the current position of the random walk.

The layout is the same as in the previous section, the main difference being the presence of the additional parameter $r_n$ that controls the size of the set of edges that are being considered for rewiring at each unit of time. We will see that this parameter controls the trichotomy.  We only consider $r_n = O(\log n)$, since the expected diameter of the configuration model is of order $\log n$ (see \eqref{CM:rad} and \cite{vdH2018, HHM2005}). For $r_n = o(\log n)$ the behaviour is dominated by the local properties of the graph dynamics and is similar to that for the local-to-global rewiring studied in Section~\ref{sec:locrrw}. On the other hand, once $r_n = \Theta(\log n)$ we get a significant contribution from a certain \enquote{boundary term} in the computation of the tail probability $\prob(\tau > t \mid \eventSA{t})$, and we find a behaviour that is more similar to the global-to-global rewiring studied in Section~\ref{sec:globrrw}.

First, we claim that the random walk is again irreducible and aperiodic:

\begin{proposition}[Irreducibility and aperiodicity]
Non-backtracking random walk with near-to-global rewiring is aperiodic and irreducible.
\end{proposition}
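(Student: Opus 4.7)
The plan is to mirror the proof of the analogous statement for local-to-global rewiring, which is deferred to Appendix~\ref{appA}, and adapt it to the larger set of candidate edges $K_t = \nearset{C_{t-1}}{X_{t-1}}{r_n}$ used at each time step. The first observation I would record is that, assuming $\alpha_n \in (0,1)$, and since $K_t$ always contains the edge $\{X_{t-1},C_{t-1}(X_{t-1})\}$ (the $\rho=0$ term in Definition~\ref{def:edgesets}), with strictly positive probability one can realize at any step either (i) no rewiring at all, so that only the walk moves on the unchanged configuration, or (ii) rewiring of exactly one prescribed edge $e \in K_t$, which by Definition~\ref{def:rewirings}(3)(b) is then paired with any prescribed partner edge with positive probability.

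For irreducibility, I would proceed in two stages. First, I would use (ii) to implement any single transposition of two edges whose pivot lies in $K_t$; since $\{X_{t-1},C_{t-1}(X_{t-1})\} \in K_t$, the walk's own edge is always available as a pivot. Second, because any two configurations of $H$ differ by a finite sequence of such pair-transpositions, any $\xi$ can be transformed into any $\eta$ by a finite sequence of single swaps. Between consecutive swaps I would either use (i) to reposition the walk without altering the configuration, or perform auxiliary pivot-swaps that simultaneously reposition the walk and advance $\xi$ towards $\eta$. Concatenating yields a positive-probability trajectory from $(x,\xi)$ to an arbitrary target $(y,\eta)$.

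For aperiodicity, my plan is to exhibit a state from which the joint chain returns to itself in a single step. A convenient candidate is $(x',\xi')$ such that $\{x',\xi'(x')\}$ is a graph self-loop, i.e., $v(x')=v(\xi'(x'))$. From such a state, with positive probability no rewiring occurs, and the walk moves to a uniformly chosen sibling of $\xi'(x')$; since $v(\xi'(x'))=v(x')$ and $\xi'(x')\neq x'$, the half-edge $x'$ itself is one such sibling and is selected with positive probability. Thus $(x',\xi')$ has a positive-probability one-step self-return. Because such self-loop configurations can be produced by a single pivot-swap (choosing a partner edge sharing a vertex with the pivot, which exists by Condition~\ref{cond-regularity-graph}\ref{cond-regularity-graph-R3}), they are reachable from any initial state by the irreducibility argument, and aperiodicity of the joint chain then follows.

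The main obstacle I anticipate is the combinatorial bookkeeping in the irreducibility step: one has to provide an explicit recipe for the sequence of pivot-swaps and walk steps so that each intended transposition has its pivot edge in $K_t$ at the moment it is executed, without undoing progress already made on the configuration. Condition~\ref{cond-regularity-graph}\ref{cond-regularity-graph-R3} (all degrees at least $2$) together with the fact that $K_t$ always contains the walk's current edge provide the necessary slack, but the argument will need to be written out carefully to confirm that the walk never gets trapped in a region that prevents completing the required swap sequence.
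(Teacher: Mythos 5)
Your proposal is correct and rests on exactly the observation the paper uses: since $K_t = \nearset{C_{t-1}}{X_{t-1}}{r_n}$ always contains the walker's own edge $\{X_{t-1},C_{t-1}(X_{t-1})\}$, every transition available under local-to-global rewiring (rewire only that edge, or rewire nothing) is also available here, so the Appendix~\ref{appA} argument applies. The paper simply invokes that appendix result verbatim rather than re-deriving it, so the combinatorial bookkeeping you flag as the main obstacle is already done there and need not be repeated.
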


\begin{proof}
In Appendix~\ref{appA} we show that the joint Markov chain with local-to-global rewiring is irreducible and aperiodic. Since near-to-global rewiring admits all the transitions that are admitted for local-to-global rewiring, the proof carries over.
\end{proof}

Next, we claim that the stationary distribution is again uniform:

\begin{proposition}[Stationary distribution]
For any $\alpha_n \in[0,1]$ and $r_n = O(\log n)$, $U_H\times\UConf$ is the stationary distribution for the random walk with near-to-global rewiring with parameters $\alpha_n, r_n$.
\end{proposition}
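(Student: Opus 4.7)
The plan is to mirror the proof of Proposition~\ref{prop:ltgstatdist} line by line. That proof rested on two separate ingredients: for every $\eta \in \Conf_H$, the non-backtracking transition matrix $P_\eta$ on $H$ is doubly stochastic (so $U_H$ is $P_\eta$-stationary), and for every $x \in H$, the graph transition matrix $Q_x$ on $\Conf_H$ is doubly stochastic (so $\UConf$ is $Q_x$-stationary). Given these two ingredients, the short calculation
\begin{equation*}
\sum_{x,\xi} U_H(x)\,\UConf(\xi)\, Q_x(\xi,\eta)\, P_\eta(x,y)
= \UConf(\eta) \sum_{x} U_H(x)\, P_\eta(x,y)
= U_H(y)\,\UConf(\eta)
\end{equation*}
at the end of the proof of Proposition~\ref{prop:ltgstatdist} goes through verbatim. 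The first ingredient is a property of the random-walk rule only (Section~\ref{ss:rw}) and is unaffected by the choice of graph dynamics, so I only need to revisit the second ingredient.

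Here lies the real difference with the local-to-global case. There $Q_x$ was actually symmetric, because rewiring involved only the single edge incident to $x$ and symmetry could be read off the explicit expression in \eqref{eq:QR}. For near-to-global rewiring the source set $K_t = \nearset{C_{t-1}}{X_{t-1}}{r_n}$ is measured in the \emph{current} configuration: the forward transition from $\xi$ uses the $r_n$-ball around $x$ in $\xi$, whereas the reverse from $\eta$ would use the (generally different) $r_n$-ball in $\eta$. So the symmetry argument is no longer available, and a separate proof of double stochasticity of $Q_x$ is required.

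I would supply this by invoking the general framework developed in Appendix~\ref{appC}. Near-to-global rewiring fits the ``anything-to-global'' template of Definition~\ref{def:rewirings}: the source set $K_t$ is an arbitrary function of the current state, while the target pool is $L_t \equiv C_{t-1}$, the full edge set. The appendix identifies the general form of $Q_x$ for such mechanisms and shows, by a combinatorial bijection between the quadruples $(\xi, R_t, R_t', \text{half-edge orderings})$ that produce a given $\eta$ and those that produce a given $\xi$ starting from $\eta$, that $Q_x$ is doubly stochastic regardless of how $K_t$ is defined. Quoting this, $\UConf$ is $Q_x$-stationary for every $x \in H$, and the computation above closes the argument.

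The main obstacle is therefore not in the present proposition but in the general ``anything-to-global'' statement in Appendix~\ref{appC}: one has to verify that averaging over $\xi$ the probability of producing a fixed target $\eta$ yields $1$, even though $K_t(\xi,x)$ varies with $\xi$ in a complicated way. The key observation making this tractable is that partners are drawn uniformly from the entire edge set $L_t \equiv C_{t-1}$, which supplies enough ``incoming mass'' from every configuration compatible with $\eta$ around $x$; the rest is bookkeeping on random orderings of the involved half-edges. With that combinatorial lemma in hand, the present proposition is an immediate corollary.
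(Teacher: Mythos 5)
Your proposal matches the paper's own proof: the paper likewise establishes stationarity of $\UConf$ for the near-to-global graph dynamics by invoking the general double-stochasticity result for $(K_t)$-to-global rewirings in Appendix~\ref{appC} (Proposition~\ref{prop:globuniform}), and then repeats the computation from Proposition~\ref{prop:ltgstatdist} verbatim. Your observation that the symmetry argument of the local-to-global case is lost and must be replaced by the appendix result is exactly the point; the only cosmetic difference is that the appendix proves double stochasticity by an explicit column-sum computation with the binomial theorem rather than the bijection you sketch.
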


\begin{proof}
Apply Proposition~\ref{prop:globuniform} to establish that $\UConf$ is stationary for the chosen graph dynamics. After that the rest of the proof carries over from Proposition~\ref{prop:ltgstatdist}. 
\end{proof}

We are now ready to prove Theorem~\ref{thm:notmain}\ref{thm:notmain-B} and Corollary~\ref{nearrwmixing}. First we settle Condition \ref{cond-regularity-dynamics}\ref{cond-regularity-dynamics-D2} for \emph{good} histories. After that we identify the asymptotics of $\prob(\tau > t \mid \eventSA{t})$ and settle Condition~\ref{cond-regularity-dynamics}\ref{cond-regularity-dynamics-D1} for \emph{good} histories. Both are tricky because they force us to investigate the possible occurrence of \emph{short-cuts} in the configuration. The key ingredient in the proof is that short-cuts are unlikely when $t = O(\log n)$ and $r_n \leq (1-\vep) \rho_\mathrm{max} \log n$ for some $\vep>0$, which requires the error term in Condition \ref{cond-regularity-dynamics}\ref{cond-regularity-dynamics-D1}. We finally settle Condition~\ref{cond-regularity-dynamics}\ref{cond-regularity-dynamics-D3}. At the end we put the pieces together and wrap up the proof.

\begin{proof}[Proof of Condition \ref{cond-regularity-dynamics}\ref{cond-regularity-dynamics-D2}.]
Because the rewiring is done with the global set, we have
\begin{align}
\prob\big(C_t(x_{t-1})\in\cdot\mid \eventDSA\cap\{I_t = 1\}\big) = U_{H\setminus\{x_{t-1}, C_{t-1}(x_{t-1})\}},
\end{align}
and, just as in \eqref{eq:2overH},
\begin{align}
\Big\|\prob\big(C_t(x_{t-1})\in\cdot\mid \eventDSA \cap\{I_t = 1\}\big) - U_H(\cdot)\Big\|_{\TV} = \frac{2}{\sizeH}.
\end{align}
Thus, Condition~\ref{cond-regularity-dynamics}\ref{cond-regularity-dynamics-D2} is satisfied.
\end{proof}

\begin{proof}[Identification of $\prob(\tau > t \mid \eventSA{t})$.]
On the event $\eventSA{t}$, for $1 \leq k <  l \leq t$, let $S^{r_n}_{kl}$ be the indicator of the event that there is a \emph{short-cut} of length $\leq r_n$ between the half-edges visited by the random walk \emph{at} times $k$ and $l$, i.e., a connection \emph{not} running along the path of the random walk itself. Abbreviate $\mathrm{SH}^{r_n}(t) = (S^{r_n}_{kl})_{1 \leq k < l \leq t}$. Then, for any $x,\xi$,
\begin{equation}
\label{neartauprob}
\begin{aligned}
&\prob_{x,\xi}(\tau > t \mid \eventSA{t},~\mathrm{SH}^{r_n}(t))\\ 
&\qquad = \prod_{i=1}^{(t-r_n)_+} (1-\alpha_n)^{\sum_{l=i+1}^{i+r_n} (1 + \sum_{k=1}^{i-1} S^{r_n}_{kl})} 
\prod_{i=(t-r_n)_+ + 1}^t (1-\alpha_n)^{\sum_{l=i+1}^{t} (1 + \sum_{k=1}^{i-1} S^{r_n}_{kl})}.
\end{aligned}
\end{equation}
This equality comes from the requirement that from time $1$ until time $(t-r_n)_+$ none of the $r_n$ half-edges on the \emph{future path} must be rewired, while from time $(t-r_n)_+ + 1$ until time $t$ none of the $r_n$ half-edges on the \emph{future path until time} $t$ must be rewired. Rewrite \eqref{neartauprob} as 
\begin{equation}
\label{neartauprobalt}
\begin{aligned}
\prob_{x,\xi}(\tau > t \mid \eventSA{t},~ \mathrm{SH}^{r_n}(t)) 
&= (1-\alpha_n)^{(t-r_n)_+ r_n + \tfrac12[t-(t-r_n)_+][t-(t-r_n)_+-1]}\\ 
&\qquad \times (1-\alpha_n)^{\chi^{r_n}(t)},
\end{aligned}
\end{equation}
with
\begin{equation}
\label{chi}
\begin{aligned}
\chi^{r_n}(t) &= \sum_{i=1}^{(t-r_n)_+} \sum_{l=i+1}^{i+r_n} \sum_{k=1}^{i-1} S^{r_n}_{kl}
+ \sum_{i=(t-r_n)_+ + 1}^t \, \sum_{l=i+1}^{t} \sum_{k=1}^{i-1} S^{r_n}_{kl}\\
&= \sum_{i=1}^t \bigg( \sum_{\substack{k \in (0,\, i)\\ l \in (i,\, t \wedge (i+r_n)]}}  S^{r_n}_{kl} \bigg).
\end{aligned}
\end{equation}
The first factor in \eqref{neartauprobalt} equals 
\begin{equation}
\label{factor}
\left\{\begin{array}{ll}
\exp\big(-[1+o(1)]\,\tfrac12\alpha_n t^2\big), &t \leq r_n,\\[0.2cm]
\exp\big(-[1+o(1)]\,\alpha_n [r_n(t-r_n)+\tfrac12 r_n^2]\big), &t \geq r_n,
\end{array}
\right.
\end{equation}
and produces the scaling in Theorem~\ref{thm:notmain}\ref{thm:notmain-B} (recall \eqref{SAWt}). We therefore need to show that the second factor in \eqref{neartauprobalt} is negligible. For this it suffices to show the following:

\begin{lemma}[Bound on number of short-cuts]
\label{lem-shortcuts}
Subject to Condition \ref{cond:secmom}, $\chi^{r_n}(t)=0$ $\whp$ uniformly in $t = O(\log n)$ and $r_n \leq (1-\vep) \rho_\mathrm{max} \log n$ for some $\vep>0$. 
\end{lemma}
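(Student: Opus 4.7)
My plan is a first-moment estimate combined with Markov's inequality, since $\chi^{r_n}(t)$ is a nonnegative integer. The first reduction is purely combinatorial: inspecting \eqref{chi}, each pair $(k,l)$ with $1 \leq k < l \leq t$ contributes $S^{r_n}_{kl}$ to the triple sum for at most $r_n$ choices of the outer index $i$ (namely $\max(k+1,\, l-r_n) \leq i \leq l-1$), hence
\begin{equation*}
\chi^{r_n}(t) \;\leq\; r_n \sum_{1 \leq k < l \leq t} S^{r_n}_{kl}.
\end{equation*}
It therefore suffices to bound $\expec[S^{r_n}_{kl}]$ uniformly in $(k,l)$.

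\textbf{Short-cut probability via branching-process comparison.} For fixed $(k,l)$ I would reveal the random walk trajectory through time $t$ by means of the same sequential exploration of the initial configuration used in the proof of Lemma~\ref{mainlemma}, conditioning on $\eventSA{t}$ and on $X_{[0,t-1]}$ being good---the latter having overwhelming probability by Condition~\ref{cond-regularity-dynamics}\ref{cond-regularity-dynamics-D3}, so that in particular both $\Hdeg(X_k)$ and $\Hdeg(X_l)$ are at most $\maxHdeg$. After revealing the trajectory, only $O(tr_n)$ half-edges have been paired, and by Condition~\ref{cond-regularity-graph}\ref{cond-regularity-graph-R1} there remain $\Theta(n)$ unrevealed half-edges in which a short-cut could form. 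For $S^{r_n}_{kl} = 1$ we need an alternative path of some length $1 \leq \ell \leq r_n$ between $v(X_k)$ and $v(X_l)$ in the unrevealed configuration. Under Condition~\ref{cond:secmom}, the breadth-first exploration of the configuration model is well approximated by a Galton--Watson tree with size-biased offspring distribution $p^\star$ of mean $\nu \in (1,\infty)$ and finite variance, so the expected number of half-edges reached at distance $\ell$ from a given starting half-edge is $O(\Hdeg(X_k)\,\nu^{\ell-1})$, uniformly in $\ell \leq r_n = O(\log n)$. Each such half-edge is paired with one incident to $v(X_l)$ in the remaining uniform pairing with probability $O(\Hdeg(X_l)/n)$. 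Summing and using the polylog bound on both degrees,
\begin{equation*}
\expec\!\big[S^{r_n}_{kl}\big] \;=\; O\!\left(\frac{(\log n)^{C_1}\,\nu^{r_n}}{n}\right)
\end{equation*}
for some constant $C_1 > 0$, and the rewiring dynamics contributes only an $O(\alpha_n tr_n/n)$ correction per step that is absorbed into this bound.

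\textbf{Conclusion and main obstacle.} Summing over the $\binom{t}{2} = O((\log n)^2)$ pairs and multiplying by the factor $r_n = O(\log n)$ from Step~1,
\begin{equation*}
\expec[\chi^{r_n}(t)] \;=\; O\!\left(\frac{(\log n)^{C_2}\,\nu^{r_n}}{n}\right)
\end{equation*}
for some constant $C_2 > 0$. For $r_n \leq (1-\vep)\rho_{\mathrm{max}} \log n$ with $\rho_{\mathrm{max}} = 1/\log\nu$ by \eqref{rhomax}, one has $\nu^{r_n} \leq n^{1-\vep}$, so $\expec[\chi^{r_n}(t)] = O((\log n)^{C_2}\,n^{-\vep}) = o(1)$; Markov's inequality then yields $\chi^{r_n}(t) = 0$ whp. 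The principal difficulty in this plan is the second step: making the branching-process comparison rigorous \emph{uniformly} for distances $\ell$ of order $\log n$, which is precisely the marginal regime for the local approximation to the configuration model. The finite second-moment assumption in Condition~\ref{cond:secmom}\ref{cond:secmom-R3} is indispensable here, since heavy-tailed degrees would create hubs producing short-cuts well below the nominal radius $\rho_{\mathrm{max}}\log n$, consistent with the remarks following \eqref{rhomax} in the paper.
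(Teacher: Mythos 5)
Your overall strategy coincides with the paper's: a first-moment bound on the number of short-cuts, configuration-model path-counting estimates, degree control via Condition~\ref{cond:secmom}, and the exponent computation $\nu^{r_n}\leq n^{1-\vep}$ for $r_n\leq(1-\vep)\rho_{\mathrm{max}}\log n$. However, there is a genuine gap in your second step. You bound $\expec[S^{r_n}_{kl}]$ by the expected number of connecting paths between $v(X_k)$ and $v(X_l)$ that live entirely in the \emph{unrevealed} part of the pairing. But $S^{r_n}_{kl}$ (and hence $\chi^{r_n}(t)$ in \eqref{chi}) also counts connections that run partially along the walk's own path: e.g.\ a single fresh edge from $v(X_k)$ to $v(X_m)$ followed by a stretch of the walk's trajectory can produce many indicators $S^{r_n}_{kl}=1$ that your path count never sees. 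The paper resolves exactly this point by distinguishing \emph{disjoint} from non-disjoint short-cuts and observing that $\chi^{r_n}(t)=0$ precisely when no disjoint short-cut exists; the first-moment/union-bound argument is then run over disjoint $(s,i,j,k)$-short-cuts only, to which the fresh-path estimate (essentially \eqref{eq:CMestimates}) legitimately applies. Without this reduction (or an equivalent one), your Markov-inequality bound on $\expec[\chi^{r_n}(t)]$ does not control all the terms it needs to.

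A second soft spot is your treatment of the graph dynamics. Dismissing it as an ``$O(\alpha_n t r_n/n)$ correction per step'' is not justified and is not the mechanism that makes the estimate work: the short-cuts relevant for $\chi^{r_n}(t)$ live in the \emph{evolving} graph, and the key ingredient is that the uniform pairing is stationary under the rewiring dynamics, so that at every time $s$, conditionally on the revealed trajectory, the remaining half-edges are still uniformly paired and the configuration-model estimate applies; one then takes a union bound over the $O(r_n)$ times $s$ at which the walker's $r_n$-neighbourhood could create a given short-cut (this is where the paper's extra factor $r_n$ comes from). Two smaller remarks: invoking Condition~\ref{cond-regularity-dynamics}\ref{cond-regularity-dynamics-D3} for goodness is out of place here, since the lemma assumes only Condition~\ref{cond:secmom} and \ref{cond-regularity-dynamics-D3} is itself verified from it later; the paper instead gets $\max_{1\leq i\leq t}\Hdeg(h_i)\leq t^2$ $\whp$ directly by Markov's inequality, using that the size-biased mean is uniformly bounded. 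Finally, the difficulty you flag as principal (uniform branching-process approximation at depth $\Theta(\log n)$) is not actually an obstacle: one only needs the exact expected-path-count bound for the configuration model (\cite[Proposition 7.4]{vdH2018}), not a local-limit approximation.
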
 

\begin{proof}
Recall that $\eventSA{t}$ is the event that the random walk is self-avoiding until time $t$. Consider the ball $B_t(x)$ of radius $t$ around the starting point $x$ of the random walk. Recall that, conditionally on $\eventSA{t}$, \eqref{degdef3} implies that the probability for the random walk to choose a $t$-step self-avoiding path consisting of half-edges $\vec{h} = (h_0, \ldots, h_{t-1})$ in $B_t(x)$ equals
\begin{equation}
\prod_{i=0}^{t-1} \frac{1}{\Hdeg(h_i)}.
\end{equation}
Condition on $\vec{h}$. Note that $\eventSA{t}$ is equivalent to the event that all half-edges in $\vec{h}$ are distinct, which we assume from now on.

It is helpful to distinguish between \emph{disjoint} short-cuts and \emph{non-disjoint} short-cuts. A disjoint short-cut between two half-edges $h_i$ and $h_j$ is a short-cut that does not use any of the other half-edges in $\vec{h}$. Not all short-cuts are disjoint. Indeed, a disjoint short-cut gives rise to other short-cuts that are counted in $\sum_{1 \leq k \leq t} S^{r_n}_{kt}$, which we call non-disjoint. For example, for $r_n\geq 2$, if there is a disjoint short-cut of one edge between $h_i$ and $h_{i+4}$, then there necessarily is a short-cut between $h_i$ and $h_{i+5}$ also. The point is that $\chi^{r_n}(t)=0$ precisely when there are no disjoint short-cuts. We must also bring the \emph{graph dynamics} into the picture.

We call a disjoint short-cut a disjoint $(s,i,j,k)$-short-cut when the $r_n$-neighbourhood of the random walk at time $s$ creates a disjoint short-cut consisting of $k$ edges between $h_i$ and $h_j$. This is only possible when $s\leq i\leq s+r_n$ and $k\leq r_n$, since otherwise $h_i$ would not be in the $r_n$-neighbourhood of the random walk at time $s$, and when $j>i+r_n$, since otherwise the path of $k$-edges would not be a short-cut.

We aim to show that, for $r_n\leq (1-\vep)\rho_{\max}\log n$ and $\vep>0$, the probability that there exists a disjoint $(s,i,j,k)$-short-cut vanishes as $n\to\infty$. To do so, we rely on the first-moment method. We make crucial use of the fact that the configuration model is the stationary distribution under our graph dynamics. This implies that, conditionally on $\vec{h}$, all other half-edges at time $s$ are paired uniformly at random, so that we can use configuration model estimates. Given $\vec{h}$, the expected number of disjoint $(s,i,j,k)$-short-cuts is bounded by (see \cite[Proposition 7.4]{vdH2018}) 
\begin{equation}
\label{eq:CMestimates}
[1+o(1)]\,\frac{\Hdeg(h_i) \Hdeg(h_j)}{\widehat{\ell_n}}\,\widehat{\nu}_n^{\,k-1},
\end{equation}
with
\begin{equation}
\label{eq:hatted}
\widehat{\ell_n} = \ell_n - O(\log n), \qquad \widehat{\nu}_n = \nu_n\,\frac{\ell_n}{\widehat{\ell_n}},
\end{equation}
where $\nu_n$ is the size-biased mean of the empirical degree distribution $p_n$ (recall \eqref{nudef}), $\ell_n$ is the sum of the degrees (= number of half edges), and the error term $o(1)$ is uniform in $k\leq C\log n$. The quantities in \eqref{eq:hatted} introduce corrections that come from the fact that, conditionally on $\vec{h}$, only a subset of size $\widehat{\ell_n}$ of the half-edges is randomly paired at time~$s$. Due to Condition \ref{cond:secmom}, the sum over $1 \leq k\leq r_n\leq (1-\vep)\rho_{\max}\log n$ of this expression is bounded by $(\max_{1 \leq i\leq t} \Hdeg(h_i))^2 n^{-\vep/2}$ for $n$ large enough. Thus, for $r_n\leq (1-\vep)\rho_{\max}\log{n}$, by a union bound over $1\leq i,j\leq t$, the probability that there exists a disjoint short-cut before time $t$ is bounded by
\begin{equation}
r_n t^2 \Big(\max_{1 \leq i \leq t} \Hdeg(h_i)\Big)^2 n^{-\vep/2}.
\end{equation}

Since $t=O(\log{n})$, we can use an annealing argument to show that, subject to Condition \ref{cond:secmom}, $\max_{1 \leq i \leq t} \Hdeg(h_i)\leq t^2$ whp. Indeed, let $\tilde{h}_i$ denote the half-edge to which $h_i$ is paired, so that $\Hdeg(h_{i+1})=\Hdeg(\tilde{h}_i)$. Then, the distribution of $\Hdeg(\tilde{h}_i)$ is the size-biased degree distribution minus 1. By Condition \ref{cond:secmom}, the mean of this size-biased distribution is uniformly bounded, so that by the Markov inequality the probability that $\Hdeg(h_{i+1})\geq B$ is at most $C/B$ for any $B>0$ and some $C<\infty$. Hence the probability that $\max_{1 \leq i \leq t} \Hdeg(h_i)> t^2$ is at most $Ct/t^2=o(1)$. 

Since $r_n,t=O(\log{n})$, we conclude that the probability that $\chi^{r_n}(t)>0$ is whp at most
\begin{equation}
r_n t^{6} n^{-\vep/2} = o(1),
\end{equation}
as required.
\end{proof}

We can now complete the identification of $\prob(\tau > t \mid \eventSA{t})$. By Lemma~\ref{lem-shortcuts}, $\prob_{x,\xi}(\tau > t \mid \eventSA{t},~\mathrm{SH}^{r_n}(t))$ is asymptotically equal to the expression in \eqref{factor} $\whp$, uniformly in $r_n \leq (1-\vep)\rho_\mathrm{max} \log n$ and $t = O(\log n)$. Taking the expectation w.r.t.\ $\mathrm{SH}^{r_n}(t)$, we get that the same is true for $\prob_{x,\xi}(\tau > t \mid \eventSA{t})$. Taking the expectation w.r.t.\ $x,\xi$ as well, we conclude that the same is true for $\prob(\tau > t \mid \eventSA{t})$, as required.
\end{proof}

\begin{proof}[Proof of Condition \ref{cond-regularity-dynamics}\ref{cond-regularity-dynamics-D1}.] For all paths that describe a dynamically self-avoiding history with respect to $T \subset [t-1]$, the probability that at time $t$ the random walk steps along a rewired edge is
\begin{align}\label{eq:cond35-D1-ntg}
\prob\big(I_t = 1 \mid \eventDSA\big) = \beta_{n,t}+\vep_{n,t}(\Tallx),
\end{align}
with (recall $t_r$ from \eqref{eq:def:T})
\eqn{
\beta_{n,t}=1-(1-\alpha_n)^{(t-t_r)\wedge r_n}
}
being the probability that the $t^{\rm th}$ edge is rewired when it is in the range of the random walk path, and $\vep_{n,t}(\Tallx)\geq 0$ is the contribution due to short-cuts. Note that $\beta_{n,t}$ is independent of $(\Tallx)$, so that to verify Condition \ref{cond-regularity-dynamics}\ref{cond-regularity-dynamics-D1}, we only need to bound $\vep_{n,t}(\Tallx)$.

To identify $\vep_{n,t}(\Tallx)$, we write
\eqan{
&\vep_{n,t}(\Tallx)\\
&\quad =(1-\beta_{n,t})\,\expec\Big[[1-(1-\alpha_n)^{\chi_*^{r_n}(t)}] \mid \eventDSA\Big]\nn,
}
with
\eqn{
\label{chi*}
\chi_*^{r_n}(t) = \sum_{k=1}^{(t-r_n)_+}S^{r_n}_{kt},
}
where $\vep_{n,t}(\Tallx)$ is the probability that the $t^{\rm th}$ edge is rewired due to a short-cut that puts it in the $r_n$-neighbourhood of the location of the random walk at some time $k<t-r_n$, but is not rewired due to a rewiring on the path of the random walk. The crux of the argument is to show that the event $\eventDSA$ affects a negligible amount of half-edges. After that we are in a situation where we can once again apply configuration model estimates, as in \eqref{eq:CMestimates}. 

The event $\eventDSA$ implies certain restrictions on the pairing of half-edges for every $s \in [0,t-1]$. These restrictions can be of two kinds: they can pair two half-edges with certainty or with a probability that depends on the fine details of the rewiring dynamics. In the near-to-global case these probabilities are generally close to $1$. Denote by $H_s$ the (partially) random set of half-edges that are paired by the event $\eventDSA$ at time$s$. The following observation is crucial:

\begin{lemma}[Random pairings outside $H_s$]
\label{lemma:outsideDSA}
Conditionally on $\eventDSA$, the half-edges in $H \setminus H_s$ are paired and rewired randomly at any time $s \in [0,t-1]$. Furthermore, $H_s \subseteq H^\prime$, where $H^\prime = (x_{[0,t-1]} \cup \bar{x}_{[0,t-1]} \cup \hat{x}_{[r]} \cup \tilde{x}_{[r]})$.
\end{lemma}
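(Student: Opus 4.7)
The plan is to unpack the definition of $\eventDSA$ carefully, identify exactly which pairings are constrained by the event, and then invoke the uniformity properties of both the initial configuration model and the near-to-global rewiring. Concretely, I would first observe that the event $\eventDSA$ imposes only the following explicit pairings (see Definition \ref{def:DSA} and Remark \ref{rem:sequences}):
\begin{itemize}
\item initial pairings $\{x_s, \bar{x}_s\}$ for $s\in[0,t-1]$ and $\{\hat{x}_i, \tilde{x}_i\}$ for $i\in[r]$ in $C_0$;
\item rewired pairings $\{x_{t_i-1}, \hat{x}_i\}$ at times $t_i\in T$ in $C_{t_i}$;
\item the indicator values $I_s = \mathds{1}\{s\in T\}$, which are determined once the above pairings and the walk trajectory are prescribed.
\end{itemize}
All half-edges appearing in these constraints are contained in $H' = x_{[0,t-1]} \cup \bar{x}_{[0,t-1]} \cup \hat{x}_{[r]} \cup \tilde{x}_{[r]}$. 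Taking $H_s$ to be the (random) set of half-edges whose pairing in $C_s$ is determined by $\eventDSA$, this gives the inclusion $H_s \subseteq H'$ uniformly in $s$.

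Second, I would establish the uniform-pairing statement by induction on $s$. For the base case $s=0$, the initial configuration is drawn from $\UConf$; a standard property of the uniform measure on perfect matchings of $H$ is that conditioning on a partial matching leaves the unmatched half-edges paired uniformly at random. Applying this to the set of pairs forced by $\eventDSA$ in $C_0$ yields uniformity of the restriction of $C_0$ to $H\setminus H_0$. For the inductive step, I would separate two cases at each time $s\in[1,t-1]$. If $s\notin T$, the only constraint added by $\eventDSA$ is that the edge traversed by the walk, $\{x_{s-1}, C_{s-1}(x_{s-1})\}$, was \emph{not} rewired at time $s$; any other rewirings taking place in the $r_n$-ball pick their replacement partners uniformly from the global set $H$, and since these partners are not prescribed by $\eventDSA$, the induced law on $H\setminus H_s$ remains uniform. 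If $s\in T$, the extra constraint is $C_s(x_{s-1}) = \hat{x}_i$, which involves only half-edges in $H'$; as the near-to-global rewiring draws its partner uniformly, conditioning on this specific partner assignment again preserves uniformity on the complement by the same partial-matching argument.

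The delicate point, and the main obstacle, is that $H_s$ is itself random: it depends on which half-edges lie in the $r_n$-balls around the walk at earlier times, and it grows over time as more rewirings are registered. The careful bookkeeping will be to show that, even though the identity of $H_s$ fluctuates, at each step the newly imposed constraints only involve members of the deterministic set $H'$, so that the complement $H\setminus H_s\supseteq H\setminus H'$ is always paired uniformly. I expect the cleanest way to carry this out is to exhibit, for each realisation of $T$ and each $s$, an explicit bijection between configurations consistent with $\eventDSA$ at time $s$ and pairs consisting of (i) the prescribed matching on $H_s$ and (ii) an arbitrary perfect matching on $H\setminus H_s$, together with the appropriate independent choices made by the rewiring mechanism. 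The equal weighting of the bijection's fibers under the joint law, combined with the uniform choice of replacement partners in the global step of the rewiring, yields the claimed conditional uniformity.
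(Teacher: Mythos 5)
Your proposal is correct and rests on the same two pillars as the paper's proof: (i) every pairing constraint imposed by $\eventDSA$ involves only half-edges from the deterministic set $H'$, giving $H_s\subseteq H'$; (ii) conditioning a uniform matching on a partial matching leaves the complement uniformly paired, and the uniform choice of re-pairing partners in the global step preserves this at every rewiring. The difference is one of implementation: the paper disposes of (ii) in one line by invoking that the configuration model is the stationary law of the graph dynamics (established separately via the double-stochasticity of the rewiring transition matrix), together with the remark that fixed pairs in $H_s$ do not influence $H\setminus H_s$, whereas you carry out an explicit induction over $s$ with a fiber-counting bijection; your route is more self-contained but re-proves, in effect, the stationarity statement the paper already has on hand. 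One small imprecision to fix: at a time $s\notin T$ the event $\eventDSA$ does not only forbid rewiring of the edge currently being traversed; it also forbids rewiring at time $s$ of the edges $\{x_{q-1},\bar{x}_{q-1}\}$ that the walk will traverse at later times $q\notin T$, and it constrains how previously traversed or rewired edges may re-enter later near-sets (the paper's restrictions 1--3). This does not damage your argument, since all of these additional constraints again concern only half-edges in $H'$ and are of the ``avoidance'' type that keeps the partner choice uniform on the complement, but the enumeration should be stated in full for the bookkeeping in your inductive step to be airtight.
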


\begin{proof}
Since the graph is initially drawn according to the configuration model, and the configuration model is the stationary distribution of the graph dynamics, we see that on the set $H \setminus H_s$ the pairing is uniformly at random. Because the paired half-edges in $H_s$ are fixed, they do not affect the half-edges in $H \setminus H_s$. Let us clarify the possible restrictions implied by $\eventDSA$ at time $s$:
\begin{enumerate}
\item 
Edges already traversed by the random walk can get stuck in the configuration seen by the random walk. More formally, edges $\{X_q, C_q(X_q) \}$ with $q\leq s$ need not be a part of the near-set $\nearset{C_{p-1}}{X_{p-1}}{r_n}$ for any time $p \geq s$. This concerns half-edges in $x_{[0,t-1]}, \bar{x}_{[0,t-1]}$ and $\hat{x}_{[r]}$.
\item 
Edges that are traversed at time $q$ with $q>s$ and $q \notin T$ must not get rewired before the random walk crosses them. This concerns half-edges in $x_{[0,t-1]}$ and $\bar{x}_{[0,t-1]}$.
\item 
Edges that are traversed at time $q$ with $q>s$ and $q \in T$ can (but need not) get rewired before the random walk crosses them. If they get rewired just before the random walk crosses them and near-sets at times $<q$ do not contain $\tilde{x}_q$, then $\{x_q, \bar{x}_q \}$ and $\{\hat{x}_q, \tilde{x}_q\}$ must remain paired until time $q$. This concerns half-edges in $x_{[0,t-1]}, \bar{x}_{[0,t-1]}, \hat{x}_{[r]}$ and $\tilde{x}_{[r]}$. 
\end{enumerate}
Observe that only the edges that consist of half-edges in $x_{[0,t-1]}, \bar{x}_{[0,t-1]}, \hat{x}_{[r]}$, $\tilde{x}_{[r]}$ can be fixed. If we take the union of all these half-edges $H^\prime$, we get a crude upper estimate on $H_s$ that is valid for all $s \in [0,t-1]$.
\end{proof}

Next we estimate the number of half-edges that are influenced by the restrictions implied by $\eventDSA$:

\begin{lemma}[Estimate of influenced half-edges]
Conditionally on $\eventDSA$, $H_s$ satisfies the estimate $|H_s| = O(t)$ for any $s \in [0, t-1]$.
\end{lemma}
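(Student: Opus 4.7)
The plan is to derive this estimate as a direct corollary of the preceding lemma on random pairings outside $H_s$. That lemma already establishes the key structural containment $H_s \subseteq H^\prime$ with
\begin{equation}
H^\prime = x_{[0,t-1]} \cup \bar{x}_{[0,t-1]} \cup \hat{x}_{[r]} \cup \tilde{x}_{[r]},
\end{equation}
so the present lemma reduces to a cardinality count of $H^\prime$.

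Specifically, I would simply note that the four sequences making up $H^\prime$ have lengths $t$, $t$, $r$, and $r$ respectively, by Definition~\ref{def:pathseq}. Since $T = \{t_1,\ldots,t_r\} \subset [t-1]$ we have $r \leq t-1 < t$, so that
\begin{equation}
|H_s| \leq |H^\prime| \leq |x_{[0,t-1]}| + |\bar{x}_{[0,t-1]}| + |\hat{x}_{[r]}| + |\tilde{x}_{[r]}| \leq 2t + 2r \leq 4t,
\end{equation}
which gives $|H_s| = O(t)$ uniformly in $s \in [0,t-1]$.

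I do not anticipate any real obstacle: all the substantive work has already been done in identifying $H^\prime$ as an upper bound on $H_s$ via the enumeration of the three types of pairing restrictions implied by $\eventDSA$ (frozen edges already traversed, edges on the future path that must survive rewiring, and rewired edges on the future path). The only subtlety worth double-checking is that no additional half-edges beyond those explicitly listed in the four sequences (for example, siblings of path half-edges, or half-edges created by short-cuts inside the $r_n$-balls) need to be added to $H_s$. This is exactly the content of the preceding lemma, which guarantees that the restrictions attach to half-edges inside $H^\prime$ only, while everything in $H \setminus H^\prime$ remains paired uniformly at random.
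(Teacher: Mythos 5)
Your proof is correct and follows the same route as the paper: both reduce the claim to the containment $H_s \subseteq H^\prime$ from Lemma~\ref{lemma:outsideDSA} and then count the four sequences, using $r \leq t-1$ (the paper phrases this as taking $T = [t-1]$) to bound each by $O(t)$. The only cosmetic difference is that the paper counts $t-1$ versus your $t$ elements per path sequence, which is immaterial for the $O(t)$ conclusion.
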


\begin{proof}
In view of Lemma~\ref{lemma:outsideDSA}, it suffices to bound the number of half-edges in the sequences $x_{[0,t-1]}, \bar{x}_{[0,t-1]}, \hat{x}_{[r]}$, $\tilde{x}_{[r]}$, namely, $|H^\prime| = O(t)$. The sequences $x_{[0,t-1]}$ and $\bar{x}_{[0,t-1]}$ each contain $t-1$ half-edges by definition. The numbers of half-edges in $\hat{x}_{[r]}$ and $\tilde{x}_{[r]}$ depend on the set $T \subset [t-1]$ of times when the random walk steps over a rewired edge. Pick $T = [t-1]$ to see that $\hat{x}_{[r]}$ and $\tilde{x}_{[r]}$ both contain at most $t-1$ half-edges. Summing the four contributions, we see that indeed $|H^\prime| = O(t)$.
\end{proof}

We are now ready to apply configuration-model estimates:
 
\begin{lemma}[Bound on number of short-cuts]
\label{lem-shortcutsalt}
Subject to Condition \ref{cond:secmom}, conditionally on $\eventDSA$, $\chi_*^{r_n}(t)=0$ $\whp$ uniformly in $t = O(\log n)$, $r_n \leq (1-\vep) \rho_\mathrm{max} \log n$ for some $\vep>0$, and $x_{[0,t-1]}, \bar{x}_{[0,t-1]}, \hat{x}_{[r]}$, $\tilde{x}_{[r]}$.
\end{lemma}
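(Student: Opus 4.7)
The plan is to follow the same first-moment strategy as in Lemma~\ref{lem-shortcuts}, adapting it to the extra conditioning on $\eventDSA$. The key reduction is provided by Lemma~\ref{lemma:outsideDSA}: conditionally on $\eventDSA$, the half-edges in $H\setminus H_s$ are paired uniformly at random at each time $s$, and $|H_s|=O(t)=O(\log n)$. This means the configuration-model estimate \eqref{eq:CMestimates} for the probability of a given short-cut continues to apply, with only the standard $O(\log n)$ corrections already folded into $\widehat{\ell_n}$ and $\widehat{\nu}_n$ in \eqref{eq:hatted}; the additional $|H_s|=O(\log n)$ pairings frozen by $\eventDSA$ only inflate these corrections by a further $O(\log n)$, which is absorbed without changing the estimate.

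Conditionally on the random-walk path $\vec h=(h_0,\dots,h_{t-1})$ being compatible with $\eventDSA$, it suffices to bound the expected number of \emph{disjoint} $(k,t,j)$-short-cuts (paths of $j\leq r_n$ edges connecting $h_k$ to $h_t$ and not meeting $\vec h$), since a non-disjoint short-cut can only arise if a disjoint one is already present. The conditional expected count of such a short-cut is at most
\[
[1+o(1)]\,\frac{\Hdeg(h_k)\,\Hdeg(h_t)}{\widehat{\ell_n}}\,\widehat{\nu}_n^{\,j-1}.
\]
Summing over $1\leq j\leq r_n$ and using $\widehat{\nu}_n\to\nu>1$ together with $r_n\leq(1-\vep)\rho_\mathrm{max}\log n=(1-\vep)\log n/\log\nu$, the geometric series gives $\widehat{\nu}_n^{\,r_n}\leq n^{1-\vep/2}$ for $n$ large, so after dividing by $\widehat{\ell_n}=\Theta(n)$ the bound is $O([\max_i\Hdeg(h_i)]^2\,n^{-\vep/2})$. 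Exactly as in Lemma~\ref{lem-shortcuts}, Condition~\ref{cond:secmom} together with Markov's inequality applied to the size-biased forward degree distribution yields $\max_{0\leq i\leq t-1}\Hdeg(h_i)\leq t^2$ whp. A union bound over the at most $t$ values of $k\leq(t-r_n)_+$ then gives
\[
\prob(\chi_*^{r_n}(t)>0\mid\eventDSA)=O\bigl(t\cdot t^{4}\cdot n^{-\vep/2}\bigr)=o(1),
\]
and this is uniform in the history, since the bound depends on $\allx$ only through $|H_s|$ and the degrees along $\vec h$.

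The main obstacle is justifying that, despite the elaborate conditioning on $\eventDSA$, the configuration-model first-moment estimate \eqref{eq:CMestimates} really does apply. This is the content of Lemma~\ref{lemma:outsideDSA}: the graph dynamics preserves uniformity of the pairings outside the tracked set $H_s$, and $|H_s|=O(\log n)$ is of the same order as the built-in error in \eqref{eq:hatted}. Once this reduction is granted, the argument is a routine adaptation of Lemma~\ref{lem-shortcuts}, the only difference being that the outer index is now fixed at $j=t$ and the inner index $k$ ranges only over $k\leq t-r_n$, which is precisely why it is the geometric factor $\widehat{\nu}_n^{r_n}$ that controls the estimate and why the restriction $r_n\leq(1-\vep)\rho_\mathrm{max}\log n$ is exactly what is needed to make this factor a negative power of $n$.
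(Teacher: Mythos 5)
Your reduction via Lemma~\ref{lemma:outsideDSA} ($|H_s|=O(t)$, uniform pairing outside $H_s$, configuration-model estimate \eqref{eq:CMestimates}, geometric sum over $j\leq r_n$ controlled by $r_n\leq(1-\vep)\rho_\mathrm{max}\log n$) is exactly the paper's route. The gap is in the degree-control step. You bound $\max_{0\leq i\leq t-1}\Hdeg(h_i)\leq t^2$ ``whp'' by Markov's inequality applied to the size-biased forward degree, exactly as in Lemma~\ref{lem-shortcuts}. But that argument is an \emph{annealed} statement about the random walk path, and in the present lemma there is no such randomness left: the conditioning is on $\eventDSA$ for a \emph{fixed} sequence $\allx$, so the degrees $\Hdeg(x_i)$ along the path are deterministic numbers, and the bound is required to hold \emph{uniformly} over the histories (this uniformity is precisely what Condition~\ref{cond-regularity-dynamics}\ref{cond-regularity-dynamics-D1} demands). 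For a history passing through high-degree half-edges your bound $\bigl(\max_i\Hdeg(x_i)\bigr)^2 n^{-\vep/2}$ need not be $o(1)$ at all --- Condition~\ref{cond:secmom} still allows individual degrees of polynomial order in $n$ --- so your closing claim that the estimate ``is uniform in the history'' is false as stated, and the whp degree bound is a category error in this conditional setting.

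The paper resolves this by exploiting that Condition~\ref{cond-regularity-dynamics}\ref{cond-regularity-dynamics-D1} only needs to be verified for \emph{good} histories (Definition~\ref{def:DSA}), for which $\Hdeg(x_i)\leq\maxHdeg$ holds deterministically; this gives
\begin{equation*}
r_n\,t^{2}\,\bigl(\maxHdeg\bigr)^{2} n^{-\vep/2}=o(1),
\end{equation*}
uniformly over good histories. The possibility that the actual walk has a bad history is then handled separately, via Condition~\ref{cond-regularity-dynamics}\ref{cond-regularity-dynamics-D3} (where your annealed Markov/size-biased argument is the right tool) and the extra failure rule in the coupling for high-degree half-edges. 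So the fix is not a new idea but a relocation of your degree argument: use goodness inside this lemma, and keep the probabilistic degree bound for the verification of \ref{cond-regularity-dynamics-D3}.
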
 

\begin{proof}
Observe that $\chi_*^{r_n}(t) > 0$ implies the existence of a $(s,i,j,k)$-short-cut at some time $s \in [0, t-1]$. In Lemma~\ref{lem-shortcuts} we proved a result about rarity of these shortcuts where we assumed only Condition~\ref{cond:secmom}. The statement of the current lemma furthermore assumes that the event $\eventDSA$ occurs.

In Lemma~\ref{lemma:outsideDSA} we have shown that at any time $s \in [0, t-1]$ the conditioning on $\eventDSA$ only affects the pairing of some half-edges in $H_s$. In Lemma~\ref{lem-shortcutsalt} we gave an estimate of $|H_s|$ for any $s \in [0, t-1]$. These two results bring us into the same setting as we had in the proof of Lemma~\ref{lem-shortcuts}, namely, we see that configuration model estimates hold (recall \eqref{eq:CMestimates}). Therefore, by the same argument as above, given that $r_n, t = O(\log n)$, we once again claim that the probability of $\chi_*^{r_n}(t) > 0$ is at most
\begin{equation}
 r_n t^{2} \Big(\max_{1 \leq i \leq t-1} \Hdeg(x_i)\Big)^2 n^{-\vep/2}.
\end{equation}
Since Condition~\ref{cond-regularity-dynamics}\ref{cond-regularity-dynamics-D1} concerns sequences $x_{[0,t-1]}$ that are good, we have
\begin{equation}
\max_{1 \leq i \leq t-1} \Hdeg(x_i) \leq \maxHdeg,
\end{equation}
and so 
\begin{equation}
r_n t^{2} (\maxHdeg)^2 n^{-\vep/2} = o(1),
\end{equation}
as required. Note that $\chi_*^{r_n}(t) \leq \chi^{r_n}(t)$ (compare \eqref{chi} and \eqref{chi*}).
\end{proof}

Now we see that the contribution of the $\vep_{n,t}(\Tallx)$ term in \eqref{eq:cond35-D1-ntg} is $O(n^{-\vep/2})$ and therefore Condition~\ref{cond-regularity-dynamics}\ref{cond-regularity-dynamics-D1} holds.
\end{proof}

\begin{proof}[Proof of Condition~\ref{cond-regularity-dynamics}\ref{cond-regularity-dynamics-D3}]
Observe that, for $t = O(\log n)$,
\begin{equation}
\begin{aligned}
 \prob\left(X_{[0,t-1]}\text{ is bad} \right) 
&= \prob\left(\exists\,1\leq i \leq t\colon\, \Hdeg(X_i) > \maxHdeg\right)\\
&\leq \sum_{1 \leq i \leq t} \prob\left(\Hdeg(X_i) > \maxHdeg\right)\\
&\leq \sum_{1 \leq i \leq t} \frac{\expec[\Hdeg(X_i)]}{\maxHdeg}\\
&\leq t\,\frac{\max_{1\leq i \leq t} \expec[\Hdeg(X_i)]}{\maxHdeg} \\
&= O\left(\frac{1}{(\log{n})^{1+\vep}}\right),
\end{aligned}
\end{equation}
where we use that $\max_{1\leq i \leq t} \expec[\Hdeg(X_i)]$ is finite by Condition~\ref{cond:secmom}. 
Since $\vep>0$, Condition~\ref{cond-regularity-dynamics}\ref{cond-regularity-dynamics-D3} follows.
\end{proof}

\begin{proof}[Completion of the proof of Theorem~\ref{thm:notmain}\ref{thm:notmain-B} and Corollary~\ref{nearrwmixing}.] We already verified Condition \ref{cond-regularity-dynamics}, and have shown that $\prob(\tau > t \mid \eventSA{t})$ is asymptotically equal to the expression in \eqref{factor}. Furthermore, by \eqref{SAWt}, $\eventSA{t}$ occurs $\whp$, uniformly in $t = O(\log n)$. This completes the proof of Theorem~\ref{thm:notmain}\ref{thm:notmain-B}. Finally, given Condition~\ref{cond-regularity-graph}\ref{cond-regularity-graph-R1}, we can again use Corollary~\ref{maincor}, which combined with \eqref{c*def} yields Corollary~\ref{nearrwmixing}. 

\end{proof}


\subsection{Global-to-global rewiring}
\label{sec:globrrw}

In this section we focus on global-to-global rewiring. This choice was already explored in \cite{AGHH2018}, \cite{AGHH20182}, with the minor difference that in the present paper the parameter $\alpha_n$ is the probability that an edge gets rewired per unit of time, while in \cite{AGHH2018}, \cite{AGHH20182} it was the \emph{fraction} of edges that get rewired per unit of time. This difference has no impact on the scaling of the mixing times. Global-to-global rewiring corresponds to the choice $K_t = L_t \equiv C_{t-1} $ (see Remark~\ref{rem:edgeset}) for all $t$ in Definition~\ref{def:rewirings}. Unlike for the previous examples, now the rewiring is \emph{independent} of the position of the random walk, so the graph dynamics becomes Markovian.

As before, the use of Corollary~\ref{maincor} depends on Condition~\ref{cond-regularity-graph}\ref{cond-regularity-graph-R1}. The proof of Theorem~\ref{thm:notmain}\ref{thm:notmain-C} uses that for all $x$ and $\xi$,
\begin{equation}
\label{globtauprob}
\prob_{x,\xi}(\tau > t \mid \eventSA{t}) = \prod_{i=1}^t (1-\alpha_n)^{t-i} = \exp\big(-[1+o(1)]\,\tfrac12\alpha_n t^2\big).
\end{equation}
The first equality comes from the requirement that up to time $t$ each of the half-edges on the future path of the random walk up must not get rewired. We thus obtain the scaling in Theorem~\ref{thm:notmain}\ref{thm:notmain-C} (again recall \eqref{SAWt}). Given Condition~\ref{cond-regularity-graph}\ref{cond-regularity-graph-R1}, we can again use Corollary~\ref{maincor}, which combined with \eqref{c*def} yields Corollary~\ref{globalrwmixing}. 

Irreducibility and aperiodicity of the rewiring was settled in \cite{AGHH2018}. The fact that the stationary distribution is the configuration model is settled by Proposition~\ref{prop:globuniform}, in combination with an argument analogous to Proposition~\ref{prop:ltgstatdist}. It remains to establish Condition~\ref{cond-regularity-dynamics}.

\begin{proposition}[Regularity of graph dynamics for global-to-global rewiring]
Global-to-global rewiring satisfies the graph-dynamics regularity conditions formulated in Condition~\ref{cond-regularity-dynamics}.
\end{proposition}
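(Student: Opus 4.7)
Global-to-global rewiring is structurally the simplest of the three cases because the graph dynamics forms a Markov chain that is completely independent of the walker. I would verify Conditions~\ref{cond-regularity-dynamics}\ref{cond-regularity-dynamics-D1} and \ref{cond-regularity-dynamics}\ref{cond-regularity-dynamics-D2} directly; Condition~\ref{cond-regularity-dynamics}\ref{cond-regularity-dynamics-D3} is not needed, since the first two are established for all dynamically self-avoiding histories, not just good ones.

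For \ref{cond-regularity-dynamics-D1}, the key observation is that every edge is independently marked for rewiring at each step by a Bernoulli($\alpha_n$) coin, independently of the walker and of every other edge. On the event $\mathsf{DSA}(T, \allx)$, the conditioning pins down the selection and pairing outcomes at the edges containing $x_0, \dots, x_{t-2}$ (through the values of $I_s$ for $s \in [t-1]$ and the pairings $\hat{x}_i$), but does not touch the edge carrying $x_{t-1}$. Since $\{I_t = 1\}$ is the event that the edge carrying $x_{t-1}$ was marked for rewiring at some time $r \in [1,t]$, and since these Bernoulli selections are mutually independent and independent of the selections at other edges, $\prob(I_t = 1 \mid \mathsf{DSA})$ admits a closed-form expression depending only on $t$, $\alpha_n$ and $T$. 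The small correlation introduced by conditioning on the random pairings $\hat{x}_i$ is of order $O(1/|H|)$ per pairing, which summed over $|T| = O(\log n)$ gives $O((\log n)/n) = o(1/\log n)$, so the uniform bound in~\ref{cond-regularity-dynamics-D1} follows.

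For \ref{cond-regularity-dynamics-D2}, let $r^* \leq t$ denote the last time in $[1,t]$ at which $x_{t-1}$ is rewired, so that $C_t(x_{t-1}) = C_{r^*}(x_{t-1})$. At time $r^*$ the rule of Definition~\ref{def:rewirings}(b) pairs $x_{t-1}$ with a uniformly chosen half-edge of $R'_{r^*}$, itself a uniformly random size-$|R_{r^*}|$ subset of the non-selected edges. Averaging over the Bernoulli selections and over the random choice of $R'_{r^*}$, a direct calculation yields, for any $h \in H \setminus \{x_{t-1}, C_{r^*-1}(x_{t-1})\}$, that $\prob(C_{r^*}(x_{t-1}) = h \mid \mathsf{DSA} \cap \{I_t = 1\}) = 1/|H| + O(1/|H|^2)$, with the two excluded half-edges having probability zero. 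This gives a total variation distance $O(1/|H|) = O(1/n) = o(1/\log n)$ from $U_H$ by Condition~\ref{cond-regularity-graph}\ref{cond-regularity-graph-R1}.

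\textbf{Main obstacle.} The only delicate point is in~\ref{cond-regularity-dynamics-D1}: one must check that conditioning on the specific pairing outcomes $\hat{x}_i = C_{t_i}(x_{t_i-1})$ at times $t_i \in T$ does not induce a non-negligible correlation with the selection of $x_{t-1}$'s edge at those same times. The resolution is that the Bernoulli selection of $x_{t-1}$'s edge is independent of the selections at all other edges, and the global uniform random pairing among the half-edges of the selected pool decouples the fate of $x_{t-1}$ from the pairing outcomes at other selected edges, up to the $O(1/|H|)$ correction that is safely absorbed in the error bound.
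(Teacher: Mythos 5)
Your proposal is correct and takes essentially the same route as the paper's own proof: verify Condition~\ref{cond-regularity-dynamics}\ref{cond-regularity-dynamics-D1} by noting that the Bernoulli edge-selections are independent of the walk, so that $\prob(I_t=1\mid\cdot)$ does not depend on the fine details of the dynamically self-avoiding history, verify Condition~\ref{cond-regularity-dynamics}\ref{cond-regularity-dynamics-D2} by the (near-)uniformity of the re-pairing of $x_{t-1}$, and observe that Condition~\ref{cond-regularity-dynamics}\ref{cond-regularity-dynamics-D3} is not needed. The only difference is one of detail: the paper simply asserts that the new pair is uniform on $H\setminus\{x_{t-1},C_{t-1}(x_{t-1})\}$, giving total variation distance exactly $2/\sizeH$, whereas you track the mechanism of Definition~\ref{def:rewirings}(b) more explicitly and allow small corrections, which is equally sufficient for the required $o(1/\log n)$ bound.
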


\begin{proof}
Since any edge can get rewired at any time, we have
\begin{align}
&\prob\big(I_t = 1\mid \eventDSA\big) = \alpha_n^t,
\end{align}
where we use that the edge crossed at time $t$ has had exactly $t$ opportunities to get rewired. Since this holds for any choice of $x_{[0,t-1]}$, $\bar{x}_{[0,t-1]}$, $\hat{x}_{[r]}$ and $\tilde{x}_{[r]}$, Conditions~\ref{cond-regularity-dynamics}\ref{cond-regularity-dynamics-D1} follows with zero error. Moreover, since a half-edge can get rewired to any half-edge except itself and its current pair, we know that $\prob(C_t(x_{t-1})\in\cdot\mid\eventDSA\allowbreak\cap\{I_t = 1\})$ is the uniform distribution on $H\setminus\{x_{t-1},\allowbreak C_{t-1}(x_{t-1})\}$, which gives
\begin{align}
\label{tvgtg}
\Big\|\prob\big(C_t(x_{t-1})\in\cdot\mid \eventDSA \cap\{I_t = 1\}\big) - U_H(\cdot)\Big\|_{\TV} = \frac{2}{\sizeH}.
\end{align}
Since \eqref{tvgtg} holds for any choice of $x_{[0,t-1]}$, $\bar{x}_{[0,t-1]}$, $\hat{x}_{[r]}$, $\tilde{x}_{[r]}$, Condition~\ref{cond-regularity-dynamics}\ref{cond-regularity-dynamics-D2} also follows. 
\end{proof}

\begin{remark}[Comparison with previous results]
The proof in \cite{AGHH2018} and \cite{AGHH20182} required a condition analogous to Condition~\ref{cond:secmom}, while in the present proof this is no longer needed.
\end{remark}


\appendix


\section{Irreducibility and aperiodicity}
\label{appA}

In this section we show that the random walk with \emph{local-to-global} rewiring is irreducible and aperiodic. This ensures that the total variation distance $\D_{x,\xi}(t)$ converges to $0$ as $t\to\infty$ for fixed $x\in H$, $\xi\in \Conf_H$ and $\alpha_n\in(0,1)$. Our proof builds on the proof of irreducibility of the switch chain on multigraphs given in \cite{EH1979}.

\begin{proposition}[Irreducible and aperiodic] 
The random walk with local-to-global rewiring $(X_t,C_t)_{t\in\N}$ (see Section~\ref{sec:locrrw}) is irreducible and aperiodic for any initial state $(x,\xi) \in H\times\Conf_H$ and any choice of $\alpha_n\in(0,1)$.
\end{proposition}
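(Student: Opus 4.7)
The plan is to prove aperiodicity by exhibiting a single state at which the joint chain has a self-loop, and to prove irreducibility by combining the switch-chain irreducibility result of \cite{EH1979} with explicit navigation of the non-backtracking walk.

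Aperiodicity is the easier half. Condition \ref{cond-regularity-graph}\ref{cond-regularity-graph-R3} guarantees that some configuration $\xi^* \in \Conf_H$ contains a self-loop, i.e.\ a pair $\{x^*, \xi^*(x^*)\} \in \xi^*$ with $v(x^*) = v(\xi^*(x^*))$. Then $x^*$ is a sibling of $\xi^*(x^*)$ distinct from $\xi^*(x^*)$ itself, so that $P_{\xi^*}(x^*, x^*) = 1/\Hdeg(x^*) > 0$. Combining with the event \enquote{no rewiring at time $1$}, which has probability $1 - \alpha_n > 0$, the joint chain has a positive-probability self-loop at $(x^*, \xi^*)$, so this state has period $1$; once irreducibility is proved, period $1$ propagates to every state.

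For irreducibility, I would fix arbitrary $(x,\xi),(y,\eta) \in H \times \Conf_H$ and construct a finite positive-probability trajectory joining them in three phases. In phase~1, repeated rewirings reduce the current configuration to a connected one: if the walk sits in a component $C_1$ and another component $C_2$ is present, one rewiring of the walk's current edge against any edge of $C_2$ (an event of probability at least $\alpha_n/(|H|/2 - 1)$) merges $C_1$ with $C_2$, because both possible re-pairings of two vertex-disjoint edges produce a connected union. In phase~2, once the graph is connected, two elementary moves are available: (a) pure non-backtracking walk steps with probability $(1-\alpha_n)P_{C_{t-1}}(X_{t-1},\cdot) > 0$, giving an irreducible walk on the half-edges of a connected graph with minimum degree at least $2$; and (b) rewirings that realise any prescribed switch between the current edge and a target edge, with probability $\alpha_n \cdot (|H|/2 - 1)^{-1} \cdot \tfrac{1}{2} > 0$. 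Since every element of $\Conf_H$ shares a common degree sequence, the theorem of \cite{EH1979} provides a finite sequence of switches transforming the current configuration into $\eta$; each such switch is implemented by first using (a) to bring the walk to an endpoint of the relevant edge and then using (b) to execute the switch. Phase~3 is a final navigation via (a) from the walk's position in $\eta$ to the target half-edge $y$.

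The main obstacle is to ensure that the intermediate configurations traversed during the switching sequence in phase~2 remain \emph{connected} (so that navigation (a) can reposition the walk between successive switches), since a single switch can in principle disconnect the graph. I would handle this either by inserting extra component-merging rewirings on the fly whenever a switch disconnects the graph (so that phase~2 becomes an interleaving of switches and merges), or more cleanly by routing the whole trajectory through a canonical \enquote{hub} configuration (one whose support contains a spanning structure the walk can always traverse) and showing that both $\xi$ and $\eta$ can be reduced to this hub via admissible moves. A minor subtlety is that each rewiring produces one of exactly two re-pairings with probability $\tfrac{1}{2}$ each, so executing a prescribed switch in (b) requires hitting the correct outcome; this affects the explicit probability bounds but not positivity.
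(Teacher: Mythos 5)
Your aperiodicity argument is fine and is essentially the paper's (a self-loop traversed without rewiring). The irreducibility argument, however, has a genuine gap in its navigation step (a): it is simply not true that the non-backtracking walk on the half-edges of a connected multigraph with minimum degree at least $2$ is irreducible. If the configuration is a cycle — which Condition~\ref{cond-regularity-graph}\ref{cond-regularity-graph-R3} cannot rule out, e.g.\ when every vertex has degree exactly $2$, in which case \emph{every} connected configuration is a single cycle — the walk moves deterministically around the cycle and only ever visits the half-edges oriented in one rotational direction; a degree-$2$ vertex carrying a self-loop even traps the walk on a single half-edge forever. Since your phase~2 requires parking the walk on a \emph{specific half-edge} of each edge to be switched, and phase~3 requires reaching the specific target half-edge $y$, static navigation by pure walk steps fails as stated; repairing it forces you to interleave further rewirings into the navigation itself, which is precisely the joint walk-plus-rewiring construction your sketch does not carry out. (Remember also that configurations are multigraphs with self-loops and multiple edges, so appeals to irreducibility results for simple graphs need care.)

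The second gap is the one you flag yourself and then leave open: controlling the intermediate configurations along the \cite{EH1979} switching sequence. This is not a patch to be added \emph{on the fly}; it is where essentially all of the work in the paper's proof lies. The paper does route everything through a canonical configuration, but it is the \emph{maximally self-looped} configuration $\xi_0$ — typically highly disconnected, so not a hub with a spanning structure the walk can traverse — and accessibility to and from $(1,\xi_0)$ is proved by explicit joint moves that never invoke irreducibility of the static walk: one edge at a time the configuration is brought closer to $\xi_0$, and the walk is relocated by exploiting a cycle through its current position (guaranteed by $\deg(v)\geq 2$), temporarily rewiring the current edge against a chosen target edge, traversing the cycle, and rewiring back; accessibility in the reverse direction then follows by a relabelling argument. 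Until you specify such a mechanism — one that simultaneously prescribes where the walk is when each switch is performed and how it gets there through configurations that may be disconnected or cyclic — phases~2--3 remain a plan rather than a proof.
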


\begin{proof}
Let $V = \{v_1,\dots,v_n\}$ and assume that $\deg(v_1)\leq\deg(v_2)\leq\dots\leq\deg(v_n)$. Identify the set of half-edges $H$ with $[\sizeH] = \{1,\dots,\sizeH\}$, such that the half-edges $1,\dots,\allowbreak\deg(v_1)$ are associated to $v_1$, the half-edges $\deg(v_1)+1,\dots,\deg(v_1)+\deg(v_2)$ to $v_2$, and so on. Let $v_1',\dots,v_{2k}'\in V$ be the odd-degree vertices. We fix a configuration $\xi_0\in\Conf_H$ such that each vertex has the maximum number of self-loops, i.e., each vertex $v\in V$ with even degree has $\tfrac12\deg(v)$ self-loops, each vertex $v\in V$ with odd degree has $\tfrac12(\deg(v)-1)$ self-loops, and there is exactly one edge between every pair of odd-degree vertices $v_{2i-1}', v_{2i}'$ for $i = 1,\dots,k$ (see Figure~\ref{fig:xizero}). We will show that the pair $(1,\xi_0)\in H\times\Conf_H$ is accessible from any pair $(x,\xi)\in H\times\Conf_H$ by allowed moves for the random walk with local rewiring.

\begin{figure}[htbp]
\vspace{0.2cm}
\centering
\includegraphics[width=0.75\textwidth]{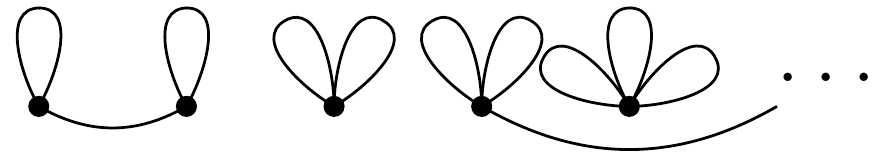}
\vspace{-0.2cm}
\caption{\small The configuration $\xi_0$.}
\label{fig:xizero}
\end{figure}

First we show that, for any $x\in H$, $(1,\xi_0)$ is accessible from $(x,\xi_0)$, by considering two different scenarios:
\begin{enumerate}
\item Suppose that $x$ is on a self-loop and $\xi_0(x) = x'$. We first move to $(1,\xi_1)$ from $(1,\xi_0)$ by rewiring the half-edges $x,x',1,2$ where $\xi_0$ and $\xi_1$ agree on all the edges except that $\xi_1(1) = x'$ and $\xi_1(2) = x$. After that we again move to $(1,\xi_0)$ from $(1,\xi_1)$ by rewiring $1,2,x,x'$ (see Figure~\ref{fig:xizeromove}). 
\item Suppose that $x$ is not on a self-loop, i.e., it is on an edge between two odd-degree vertices. We first move to $(x',\xi_0)$ without rewiring, where $x'\in H$ is on a self-loop. After that we apply the procedure in item 1 to $(x',\xi_0)$.
\end{enumerate}

\begin{figure}[htbp]
\centering
\includegraphics[width=0.75\textwidth]{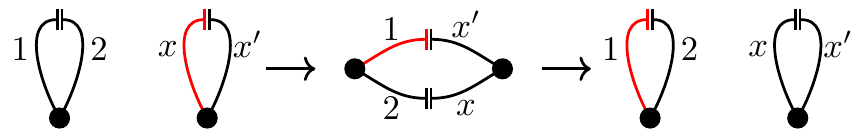}
\caption{\small Move from half-edge $x$ on a self-loop to half-edge $1$ in $\xi_0$. The red colour indicates the position of the walk.}
\label{fig:xizeromove}
\end{figure}

Next, we show that for any $(x,\xi)\in H\times\Conf_H$ with $\xi\neq\xi_0$ we have access from $(x,\xi)$ to $(y,\xi_0)$, for some $y\in H$. To do this, we show that we can move from $(x,\xi)$ to some $(y,\eta)\in H\times\Conf_H$ such that the configuration $\eta$ has more edges in common with $\xi_0$ than $\xi$ has, i.e., $|\xi\cap\xi_0|<|\eta\cap\xi_0|$, by considering the two scenarios:
\begin{enumerate}
\item 
Suppose that  $x$ is on an edge that is not in $\xi_0$, i.e., $\xi(x)\neq\xi_0(x)$. Then we move to $(y,\eta)$ by rewiring the half-edges $x,\xi(x),\xi_0(x),\xi(\xi_0(x))$, where $\xi$ and $\eta$ agree on all the edges except that $\eta(x) = \xi_0(x)$ and $\eta(\xi(x)) = \xi(\xi_0(x))$ and $y\sim\xi_0(x)$. Since $\eta(x) = \xi_0(x)$, we have $|\xi\cap\xi_0|\leq|\eta\cap\xi_0|-1$.
\item 
Suppose that $x$ is on an edge that is in $\xi_0$, i.e., $\xi(x) = \xi_0(x)$. Let $y\in H$ be a half-edge such that $\xi(y)\neq\xi_0(y)$, $\xi(x) = x'$ and $\xi(y) = y'$. Since $\deg(v)\geq 2$ for all $v\in V$, in the graph given by $\xi$ there is a cycle of edges $\{y,y'\},\{y_1,y_1'\},\dots,\{y_K,y_K'\}$ with $v(y') = v(y_1)$, $v(y_K') = v(y)$ and $v(y_i') = v(y_{i+1})$ for $i = 1,\dots,K-1$. Let $\eta\in\Conf_H$ be the configuration that agrees with $\xi$ on all the edges except that $\eta(x) = y'$ and $\eta(y) = x'$, so that the edges $\{y_1,y_1'\},\dots,\{y_K,y_K'\}$ are present in $\eta$ as well as in $\xi$. First we move from $(x,\xi)$ to $(y_1,\eta)$ by rewiring $x,x',y,y'$. Then we make $K$ moves, from $(y_i,\eta)$ to $(y_{i+1},\eta)$ for $i=1,\dots,K$, where $y_{K+1} = y$ without rewiring. After that we move from $(y,\eta)$ to $(y_1,\xi)$ by rewiring $x,x',y,y'$, and finally we traverse the cycle again without rewiring to reach $(y,\xi)$ from $(y_1,\xi)$ (see Figure~\ref{fig:cyclemove}). Now $y$ is on an edge that is not in $\xi_0$, so by applying the procedure in item 1 we can increase the number of edges we have in common with $\xi_0$.
\end{enumerate} 
By applying these procedures, we can reduce the number of edges that are not in $\xi_0$. So, we can go from any $(x,\xi)\in H\times\Conf_H$ to $(y,\xi_0)$ for some $y\in H$, and then apply the above procedure to reach $(1,\xi_0)$.

\begin{figure}[htbp]
\centering
\includegraphics[width=0.75\textwidth]{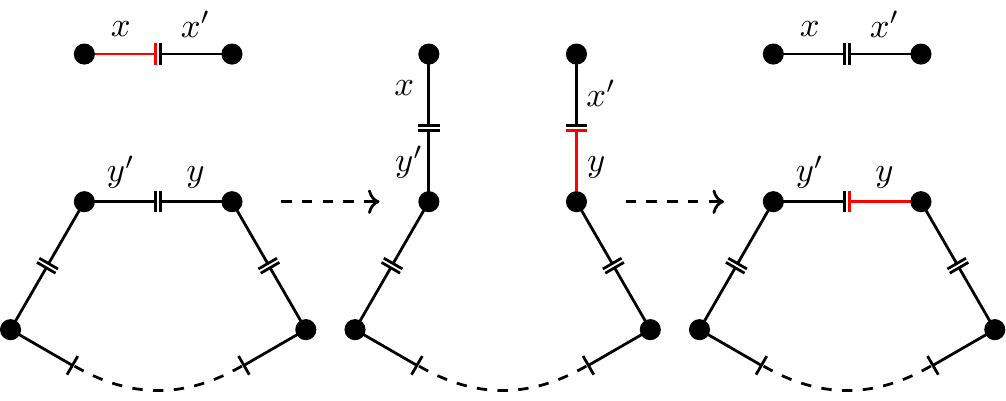}
\caption{\small Moving from $(x,\xi)$ to $(y,\eta)$ by using a cycle. The red colour indicates the position of the walk.}
\label{fig:cyclemove}
\end{figure}

To show that we can access an arbitrary state $(x,\xi)$ from $(1,\xi_0)$, we first note that we can access $(y,\xi_0)$, for any $y$, from $(1,\xi_0)$ by relabelling the half-edges and using the first argument above. Then we see that we can access $(x,\xi)$ from $(y,\xi_0)$ for any $y$ by using the above strategy of reducing the edges and using the cycles to move around. Hence, the Markov chain is irreducible. Since, by traversing the self-loop without rewiring, we can reach $(1,\xi_0)$ from itself in one step, we see that the Markov chain is also aperiodic.
\end{proof}


\section{Cut-off without dynamics}
\label{appB}

In order to use the results of~\cite{BHS2017} we need to assume the conditions stated there:

\begin{condition}[Additional regularity of degrees]
\label{cond-regularity-graph2}
$\mbox{}$
\begin{itemize}
\customitem{(R1**)}\label{cond-regularity-graph2-R1}
$\dmax \coloneqq \max_{v\in V}\deg(v)  = n^{o_(1)}$ as $n\to\infty$.
\customitem{(R2**)}\label{cond-regularity-graph2-R2}
\begin{equation*}
\frac{\lambda_2}{\lambda_1^3} = \omega\left(\frac{(\log\log\sizeH)^2}{\log\sizeH}\right),
\qquad \frac{\lambda_2^{3/2}}{\lambda_3\sqrt{\lambda_1}} 
= \omega\left(\frac{1}{\sqrt{\log\sizeH}}\right),\quad n\to\infty,
\end{equation*}
where
\begin{equation*}
\lambda_1 \coloneqq \frac{1}{\sizeH} \sum_{z\in H} \log(\Hdeg(z)), \qquad 
\lambda_m \coloneqq \frac{1}{\sizeH} \sum_{z\in H} |\log(\Hdeg(z))-\lambda_1|^m, \quad m = 2,3.
\end{equation*}
\customitem{(R3**)}\label{cond-regularity-graph2-R3}
$\deg(v)\geq 3$ for all $v\in V$.
\end{itemize}
\end{condition}

\noindent
Conditions~\ref{cond-regularity-graph2}\ref{cond-regularity-graph2-R1} and \ref{cond-regularity-graph2-R2} are technical and proof-generated. It might be possible to relax them via a truncation argument \cite{BHSprep}. Condition~\ref{cond-regularity-graph2}\ref{cond-regularity-graph2-R3} ensures that the random walk does not behave deterministically and that the configuration model is connected whp. Note that \ref{cond-regularity-graph2-R1} and \ref{cond-regularity-graph2-R3} are considerably more stringent than \ref{cond-regularity-graph-R2} and \ref{cond-regularity-graph-R3} in Condition~\ref{cond-regularity-graph}.

As shown in \cite{BHS2017}, the~following holds:

\begin{theorem}[Scaling of static mixing time]     
\label{thm:scalstat}
Subject to Condition~\ref{cond-regularity-graph2},
\begin{equation}
\Dstat_{x,\xi}(t) = 
\begin{cases}
1-o_{\sss\prob}(1), 
&\text{ if } \limsup_{n\to\infty}t/\tmixstat < 1, \\
o_{\sss\prob}(1),
&\text{ if } \liminf_{n\to\infty}t/\tmixstat > 1,
\end{cases}
\end{equation}
where
\begin{align}
\tmixstat \coloneqq [1+o_{\sss\prob}(1)]\,\cstat\log n \qquad \forall\, \varepsilon \in (0,1),
\end{align}
with $1/\cstat \coloneqq \frac{1}{\sizeH} \sum_{z\in H} \log(\Hdeg(z)) \in (0,\infty)$.
\end{theorem}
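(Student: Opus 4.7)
The plan is to follow the entropic method of Ben-Hamou and Salez, exploiting the fact that on a (locally) tree-like configuration model the non-backtracking trajectory has an explicit path probability. Fix the graph $\xi$ and a starting half-edge $x$. As long as the path $(X_0,\dots,X_{t-1})$ lies in a tree-like neighbourhood, the probability of following any particular $t$-step trajectory $(x_0,\dots,x_{t-1})$ is exactly
\[
\prod_{i=0}^{t-1}\frac{1}{\Hdeg(x_i)},
\]
so $-\log \prob_{x,\xi}(X_t=y)$ for the endpoint $y$ of a typical trajectory is close to $W_t \coloneqq \sum_{i=0}^{t-1}\log\Hdeg(X_i)$. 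The target constant $1/\cstat$ is by construction the empirical mean $\lambda_1 = \tfrac{1}{|H|}\sum_z\log\Hdeg(z)$, which under the stationary (uniform) measure on $H$ is the expectation of $\log\Hdeg$; the matching scale $\log n$ then comes from asking $\mathrm{e}^{W_t}\asymp n$.

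First I would show that whp in $\xi$ and $x$, the walk remains self-avoiding and the explored ball is a tree up to time $t=O(\log n)$. This uses Condition~\ref{cond-regularity-graph2}\ref{cond-regularity-graph2-R1} ($\dmax=n^{o(1)}$) together with a standard annealed first-moment argument on the CM to bound short cycles, in the spirit of Lemma~\ref{lem-shortcuts} but simpler since the graph is static. Next I would prove a concentration statement for $W_t$ along a typical trajectory: writing $W_t = \sum_{i=0}^{t-1} \log\Hdeg(X_i)$, the step variables $\log\Hdeg(X_i)$ are, up to controlled corrections, i.i.d.\ with mean $\lambda_1$ and centred moments $\lambda_2,\lambda_3$. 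A Berry--Esseen / Lindeberg-type bound then gives fluctuations of order $\sqrt{t\lambda_2}$ with error controlled precisely by Condition~\ref{cond-regularity-graph2}\ref{cond-regularity-graph2-R2}. Consequently,
\[
W_t = t\lambda_1 + O_{\sss\prob}(\sqrt{t\lambda_2}).
\]

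For the upper bound ($t/\tmixstat>1$): let $t=(1+\vep)\lambda_1^{-1}\log n$. Using the tree-likeness, the distribution of $X_t$ can be written as a sum of masses $\prod_{i=0}^{t-1}\Hdeg(X_i)^{-1}$, each close to $\mathrm{e}^{-W_t}\le \mathrm{e}^{-(1+\vep/2)\log n}= n^{-1-\vep/2}$ on the overwhelming event where $W_t$ exceeds its typical value by at most $O(\sqrt{t\lambda_2})$, which is negligible by the second part of Condition~\ref{cond-regularity-graph2}\ref{cond-regularity-graph2-R2}. A second-moment / $L^2$ estimate on the walk distribution then yields $\|\prob_{x,\xi}(X_t\in\cdot)-U_H\|_{\TV}=o_{\sss\prob}(1)$. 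For the lower bound ($t/\tmixstat<1$): take $t=(1-\vep)\lambda_1^{-1}\log n$ and define the \emph{concentration set}
\[
A_t \coloneqq \Big\{y\in H\colon \prob_{x,\xi}(X_t=y)\ge \mathrm{e}^{-(1-\vep/2)\log n}\Big\}.
\]
By the concentration of $W_t$, the walk places mass $1-o_{\sss\prob}(1)$ on $A_t$, whereas $|A_t|\le \mathrm{e}^{(1-\vep/2)\log n}=n^{1-\vep/2}$, so $U_H(A_t)=o(1)$ and hence $\Dstat_{x,\xi}(t)\ge \prob_{x,\xi}(X_t\in A_t)-U_H(A_t)=1-o_{\sss\prob}(1)$.

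The main obstacle is the concentration step: one must show that the random partial sums $W_t$ satisfy a \emph{uniform} Gaussian concentration over $t\asymp\log n$ with window $\sqrt{t\lambda_2}=o(1)$ on the logarithmic scale, and that the correlations introduced by the non-backtracking constraint and by the revelation of the graph are negligible. This is precisely what the quantitative assumptions in Condition~\ref{cond-regularity-graph2}\ref{cond-regularity-graph2-R2} are tailored for (the first ratio controls the variance window, the second the Berry--Esseen error); without them, concentration would only be enough for the mixing time to be of order $\lambda_1^{-1}\log n$, not to exhibit cut-off. The rest of the argument is then a fairly standard $L^2$ vs concentration-set duality.
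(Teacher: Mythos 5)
A preliminary remark on what you are being compared against: the paper does not prove Theorem~\ref{thm:scalstat} at all --- it imports the statement from \cite{BHS2017}, and Condition~\ref{cond-regularity-graph2} is just the hypothesis list of that reference. Your outline is in effect a compressed account of the entropic method of that very paper, so you are on the intended route: the path weight $W_t=\sum_{i=0}^{t-1}\log\Hdeg(X_i)$, its concentration around $t\lambda_1$ with fluctuation window governed by $\lambda_2,\lambda_3$ (which is exactly what Condition~\ref{cond-regularity-graph2}\ref{cond-regularity-graph2-R2} is calibrated for), and the lower bound via a concentration set are all as in \cite{BHS2017}. The lower-bound paragraph as you wrote it is sound: trajectory-weight concentration plus a cardinality bound on the concentration set does give $\Dstat_{x,\xi}(t)=1-o_{\sss\prob}(1)$ for $t$ below the entropic time.

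The genuine gap is in your upper bound. The estimate \enquote{each mass close to $\mathrm{e}^{-W_t}\le n^{-1-\vep/2}$} bounds the weight of a \emph{single} trajectory, but $\prob_{x,\xi}(X_t=y)$ is the sum over \emph{all} trajectories ending at $y$, and at times of order $\log n$ the number of such trajectories is exactly the quantity one must control; nothing in your sketch does so. Moreover, the plain $L^2$/chi-square duality you invoke at time $(1+\vep)\,\tmixstat$ is not available in this heterogeneous-degree setting: the second moment $\sizeH\sum_{y}\prob_{x,\xi}(X_t=y)^2$ can be dominated by a vanishing fraction of endpoints reached through anomalously low-entropy stretches (runs of small degrees), so it need not be $1+o(1)$ even when the total variation distance is already $o_{\sss\prob}(1)$. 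The argument in \cite{BHS2017} instead truncates to a set of \enquote{nice} trajectories, shows that for all but $o(\sizeH)$ half-edges the restricted mass is at most $(1+\vep)/\sizeH$, and controls the discarded mass in expectation by an annealed path-counting argument in which the pairing is revealed along the walk; the TV bound then uses only the positive part $\sum_y[\prob_{x,\xi}(X_t=y)-1/\sizeH]_+$ rather than a chi-square bound. Your proposal needs this extra layer (or an equivalent truncation scheme) to close the upper bound; as written, the step from concentration of $W_t$ to $\Dstat_{x,\xi}(t)=o_{\sss\prob}(1)$ does not follow.
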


\noindent
If, in addition,
\begin{equation}
\label{c*def}
\lim_{n\to\infty} \cstat = c_*,
\end{equation}
then Theorem~\ref{thm:scalstat} yields Theorem~\ref{thm:stat}.

\begin{remark}
We are aware of the fact that Condition~\ref{cond-regularity-graph2} is in~\cite{BHS2017} used to prove a much stronger statement than Theorem~\ref{thm:scalstat} that is related to an exact computation of the cut-off window. Therefore, if we are interested only in proving Theorem~\ref{thm:scalstat}, weaker conditions might suffice.
\end{remark}

Combining Theorem~\ref{thm:main} and Theorem~\ref{thm:scalstat}, we obtain the following corollary:

\begin{corollary}[Link between static and dynamic]
\label{maincor}
Suppose $t = O(\log n)$. Subject to Condition~\ref{cond-regularity-graph}\ref{cond-regularity-graph-R1}, Condition~\ref{cond-regularity-dynamics} and Condition~\ref{cond-regularity-graph2}, the following holds whp in $x$ and $\xi$:
\begin{equation}
\label{maincorrequ1}
\Ddyn_{x,\xi}\big(t) =
\begin{cases}  
\prob_{x,\xi}(\tau > t)+o_{\sss\prob}(1), 
&\text{ if }\limsup_{n\to\infty}t/\tmixstat < 1,\\[0.2cm]
o_{\sss\prob}(1),
&\text{ if }\liminf_{n\to\infty}t/\tmixstat > 1.
\end{cases}
\end{equation}
\end{corollary}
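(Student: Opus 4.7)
The plan is to read Corollary~\ref{maincor} as a direct substitution: Theorem~\ref{thm:main} already provides the identity
\[
\Ddyn_{x,\xi}(t) = \prob_{x,\xi}(\tau > t)\,\Dstat_{x,\xi}(t) + o_{\sss\prob}(1) \qquad \text{whp in } (x,\xi),
\]
for every $t = O(\log n)$, under Conditions~\ref{cond-regularity-graph} and \ref{cond-regularity-dynamics}. Since Condition~\ref{cond-regularity-graph2} is strictly stronger than the parts of Condition~\ref{cond-regularity-graph} not already explicitly assumed (Condition~\ref{cond-regularity-graph2}\ref{cond-regularity-graph2-R1} implies \ref{cond-regularity-graph-R2}, and Condition~\ref{cond-regularity-graph2}\ref{cond-regularity-graph2-R3} implies \ref{cond-regularity-graph-R3}), all hypotheses of Theorem~\ref{thm:main} are met, so this identity is available.

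Next I would apply Theorem~\ref{thm:scalstat} to replace $\Dstat_{x,\xi}(t)$. In the regime $\limsup_{n\to\infty} t/\tmixstat < 1$, Theorem~\ref{thm:scalstat} gives $\Dstat_{x,\xi}(t) = 1 - o_{\sss\prob}(1)$ whp; multiplying by $\prob_{x,\xi}(\tau > t) \in [0,1]$ and absorbing the resulting product $\prob_{x,\xi}(\tau > t)\cdot o_{\sss\prob}(1)$ into the $o_{\sss\prob}(1)$ error term yields the first line of \eqref{maincorrequ1}. In the regime $\liminf_{n\to\infty} t/\tmixstat > 1$, the same theorem gives $\Dstat_{x,\xi}(t) = o_{\sss\prob}(1)$ whp, and again the boundedness $\prob_{x,\xi}(\tau > t) \leq 1$ means $\prob_{x,\xi}(\tau > t)\,\Dstat_{x,\xi}(t) = o_{\sss\prob}(1)$, which combined with the additive error from Theorem~\ref{thm:main} yields the second line of \eqref{maincorrequ1}.

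The only mild subtlety, and the single place worth a line of care, is ensuring that both $o_{\sss\prob}(1)$ statements — the one produced by the coupling in Theorem~\ref{thm:main} and the one produced by Theorem~\ref{thm:scalstat} — hold on a common high-probability set of $(x,\xi)$. This follows by intersecting the two whp-events from the respective theorems; both are defined with respect to the same product measure $\mu = U_H \times \UConf$, so the intersection still has $\mu$-measure tending to $1$. Moreover, one must also verify that $t = O(\log n)$ is compatible with the scaling $\tmixstat = [1+o_{\sss\prob}(1)]\,\cstat \log n$ in both regimes; since $\cstat$ is bounded (it is a bounded average of $\log \Hdeg(z)$'s whose boundedness follows from Condition~\ref{cond-regularity-graph2}\ref{cond-regularity-graph2-R1}), any $t$ with $t/\tmixstat$ bounded away from $1$ from either side automatically satisfies $t = O(\log n)$, so Theorem~\ref{thm:main} applies throughout. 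I do not anticipate any genuine obstacle here — the corollary is essentially a bookkeeping step, so the proof should be a few lines.
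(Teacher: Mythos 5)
Your proposal is correct and follows essentially the same route as the paper, which obtains Corollary~\ref{maincor} precisely by combining Theorem~\ref{thm:main} with Theorem~\ref{thm:scalstat} and absorbing the error terms, the condition implications (Condition~\ref{cond-regularity-graph2}\ref{cond-regularity-graph2-R1} $\Rightarrow$ \ref{cond-regularity-graph-R2}, \ref{cond-regularity-graph2-R3} $\Rightarrow$ \ref{cond-regularity-graph-R3}) being exactly the right bookkeeping. The only blemish is your closing remark that $t/\tmixstat$ bounded away from $1$ forces $t=O(\log n)$ — this is false in the regime $\liminf_{n\to\infty} t/\tmixstat>1$, where $t$ could be arbitrarily large — but it is harmless since $t=O(\log n)$ is an explicit hypothesis of the corollary rather than something to be derived.
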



\section{Transition matrix for \texorpdfstring{$(K_t)$-to-$(L_t)$}{(K\_t)-to-(L\_t)} rewiring}
\label{appC}

Recall Definition~\ref{def:rewirings}, where we have introduced the general class of rewirings considered in this paper. In this appendix we provide a general expression for the transition matrix of the graph dynamics. Furthermore, we explore the conditions required for this transition matrix to be doubly stochastic. 

\begin{proposition}[Transition matrix for $(K_t)$-to-$(L_t)$ rewiring]
The transition matrix for the rewirings in Definition~\ref{def:rewirings} is
\begin{equation}
\label{eq:gentransmatrix}
Q_{K_t \to L_t} = \sum_{k=0}^{|K_t|} (1-\alpha_n)^{|K_t|-k} (\alpha_n)^k  
\sum_{\{e_1,\ldots, e_k\} \in K_t} Q^{K_t \to L_t}_{\{e_1,\ldots, e_k\}}. 
\end{equation}
The matrix element $Q_X^{K_t \to L_t}(\eta, \xi)$ that represents the rewiring of the edges in the set $X$ that realises the transition from graph state $\eta$ to graph state $\xi$ is given by
\newlength{\eqparboxwidth}
\settowidth{\eqparboxwidth}{by rewiring all edges in $X$,}
\begin{equation}
Q_X^{K_t \to L_t}(\eta, \xi) = 
\begin{cases}\displaystyle
\frac{1}{2^{|X|}\prod \limits_{i=0}^{|X|-1} \left( |L_t| - |L_t \cap K_t| - i \right)}
&\parbox{\eqparboxwidth}{if $\xi$ is accessible from $\eta$\\ by rewiring all edges in $X$,}\\
0 &\text{otherwise.}
\end{cases}
\end{equation}
\end{proposition}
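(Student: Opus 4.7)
The plan is to prove \eqref{eq:gentransmatrix} by conditioning on the random set $R_t$ of edges actually selected for rewiring in step~1 of Definition~\ref{def:rewirings}. Since the indicators $\{Z_t^{e}\}_{e\in K_t}$ are i.i.d.\ Bernoulli$(\alpha_n)$, for any $X \subseteq K_t$ we have
\begin{equation}
\prob(R_t = X) = \alpha_n^{|X|}(1-\alpha_n)^{|K_t|-|X|}.
\end{equation}
Grouping these subsets by their cardinality $k=|X|$ immediately reproduces the outer double sum of \eqref{eq:gentransmatrix}, provided one identifies $Q_X^{K_t \to L_t}$ with the conditional transition kernel $(\eta,\xi)\mapsto\prob(C_t = \xi\mid C_{t-1} = \eta,\,R_t = X)$.

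The bulk of the work is the combinatorial identification of $Q_X^{K_t \to L_t}(\eta,\xi)$. First, $\xi$ is accessible from $\eta$ by rewiring $X$ precisely when $\eta$ and $\xi$ agree on every edge outside $X\cup R_t'$ for some partner set $R_t'\subseteq L_t\setminus K_t$ of size $|X|$, and their difference is a valid re-pairing of the $4|X|$ half-edges so freed. Given such an accessible $\xi$, I would compute the conditional probability in two substeps: (i) the ordered choice of the partner set contributes $1/\prod_{i=0}^{|X|-1}(|L_t|-|L_t\cap K_t|-i)$, the probability of drawing the correct $|X|$-tuple without replacement from $L_t\setminus K_t$; (ii) the independent uniform orderings of the $2|X|$ half-edges on each side, combined with the index-by-index pairing rule of Definition~\ref{def:rewirings}(3)(b), together contribute a factor $2^{-|X|}$, because once the target pairing is fixed each of the $|X|$ new edges admits precisely two consistent placements (swap of its two endpoints between the $R_t$-side and the $R_t'$-side ordering), while all other permutations cancel between numerator and denominator. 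Multiplying these two factors produces the stated denominator $2^{|X|}\prod_{i=0}^{|X|-1}(|L_t|-|L_t\cap K_t|-i)$.

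The main obstacle is the case split of Definition~\ref{def:rewirings}(3); the formula above describes case~(b), corresponding to $|R_t|<|L_t\setminus R_t|$. I would close the argument by observing that, under the standing regime $|K_t|<|L_t\setminus K_t|$ covering all three examples of Section~\ref{sec:ex}, case~(a) never arises, since $|R_t|\leq|K_t|<|L_t\setminus K_t|\leq|L_t\setminus R_t|$. Consequently, the contribution of case~(a) is identically zero, the accessibility indicator in the definition of $Q_X^{K_t \to L_t}$ automatically restricts to partner sets drawn from $L_t\setminus K_t$, and the combinatorial counting above is exhaustive, completing the proof.
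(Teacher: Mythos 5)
Your outer step---conditioning on the selected set $R_t=X$, whose law is $\alpha_n^{|X|}(1-\alpha_n)^{|K_t|-|X|}$, and grouping by $k=|X|$---is exactly how the paper reads \eqref{eq:gentransmatrix}, so that part is fine. The genuine gap is in your step (ii), the derivation of the factor $2^{-|X|}$. Under the rule you invoke, Definition~\ref{def:rewirings}(3)(b) pools \emph{all} $2|X|$ half-edges of $R_t$ into one uniformly ordered list, all $2|X|$ half-edges of $R_t'$ into another, and pairs them index by index; the induced matching is then uniform over all $(2|X|)!$ bijections between the two sides. For a fixed target pairing the number of consistent ordering pairs is $(2|X|)!$ (the first list may be ordered freely, the second is then forced), so the conditional probability is $1/(2|X|)!$, not $2^{-|X|}$; your claim that each new edge admits ``precisely two consistent placements while all other permutations cancel'' is not a correct count, and under this pooled rule the accessible set is also larger than you describe (a half-edge of $e_i$ may be matched to a half-edge of the partner drawn for $e_j$, $j\neq i$). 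The factor $2^{|X|}$ in the statement corresponds instead to recombining each selected edge with \emph{its own} partner, each such pair of broken edges admitting exactly two cross re-pairings; that is the reading the paper actually uses (it is what makes the row sums in the appendix equal $1$, and it reduces to \eqref{eq:QR} when $|X|=1$, the only case where the two readings coincide). Relatedly, you silently replace the partner pool $L_t\setminus R_t$ of Definition~\ref{def:rewirings}(3)(b) by $L_t\setminus K_t$ so as to land on the stated denominator; a derivation must either work from the definition or flag and justify this substitution.

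A second concrete error is your dismissal of case (a): the ``standing regime'' $|K_t|<|L_t\setminus K_t|$ does \emph{not} cover all three examples, since for global-to-global rewiring (Section~\ref{sec:globrrw}) one has $K_t=L_t$, so $L_t\setminus K_t=\emptyset$ and the inequality fails outright; there case (a) occurs precisely when $|R_t|\ge|L_t\setminus R_t|$, an event of positive (albeit exponentially small) probability, so it cannot be excluded deterministically. For calibration: the paper itself does not give a formal proof of this proposition---it explains the meaning of the terms (your outer decomposition) and then verifies row-stochasticity of \eqref{eq:gentransmatrix} by direct computation, leaving the case split and the pooled-versus-pairwise reading implicit---but the two points above are flaws in your argument as written, not merely inherited ambiguities.
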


Observe that the matrix given by \eqref{eq:gentransmatrix} is a sum of multiple terms. Let us explain the meaning of these terms through the example of the general term
\begin{equation}
(1-\alpha_n)^{|K_t|-k} \alpha_n^k  \sum \limits_{\{e_1,\ldots, e_k\}\in K_t} Q^{K_t\rightarrow L_t}_{\{ e_1,\ldots, e_k\} }.
\end{equation}
First, the factor $(1-\alpha_n)^{|K_t|-k}$ represents the probability of $|K_t|-k$ edges not getting rewired, and its counterpart $\alpha_n^{k}$ represents the probability of $k$ edges getting rewired. The sum runs over all $k$-tuples from $K_t$, and the matrix $Q^{K_t\rightarrow L_t}_{\{ e_1,\ldots, e_k\}}$ represents the possible rewiring of the $k$-tuples of edges we are summing over.

The Markov chain transition matrix must be stochastic. Let us check this by an explicit computation. Take an arbitrary graph state $\eta$. In the row that lists the probabilities of all the possible transitions from $\eta$, we get the following contributions:
\begin{equation}
\begin{aligned}
\sum \limits_{\xi \in \Conf_H}Q_{K_t\rightarrow L_t} (\eta,\xi)
&= \sum_{k=0}^{|K_t|} (1-\alpha_n)^{|K_t|-k} \alpha_n^k \binom{|K_t|}{k} 
\frac{2^{k}\prod \limits_{i=0}^{k-1} (|L_t| - |L_t \cap K_t| - i)}{2^{k}\prod \limits_{i=0}^{k-1} (|L_t| - |L_t \cap K_t| - i)} \\
&= \sum_{k=0}^{|K_t|} (1-\alpha_n)^{|K_t|-k} \alpha_n^k \binom{|K_t|}{k}  \\
& = \left[(1-\alpha_n) + \alpha_n\right]^{|K_t|} = 1.
\end{aligned}
\end{equation}
The combinatorial factor $\binom{|K_t|}{k}$ counts the different ways of choosing $k$-tuples from $K_t$. Since the entries in $Q_X^{K_t\rightarrow L_t}(\eta, \xi)$ are chosen to be the reciprocal of the number of accessible states, it is not surprising that they sum up to 1. The factor $2^k$ comes from the ability to break up an edge into two ordered sets of half-edges.

Observe that the matrix defined by \eqref{eq:gentransmatrix} has a \enquote{binomial} structure, but that it is \emph{not} of the form
\begin{equation}
\label{eq:wrongtransmatrix}
Q_X^{K_t\rightarrow L_t}(\eta, \xi) = \prod \limits_{e\in X} \left( (1-\alpha_n) I 
+ \alpha_n  Q^{K_t\rightarrow L_t}_{\{e\} } \right).
\end{equation}
Clearly, \eqref{eq:wrongtransmatrix} would be correct if we would draw $e^\prime \in R^\prime_t$ in Definition~\ref{def:rewirings} \emph{with replacement}, when the state space for the rewiring of $|X|$ edges would have size $2^{|X|} \prod_{i=1}^{|X|} (|L_t| - |L_t\cap K_t|)$. In the current setting, where we draw $e^\prime$ \emph{without replacement}, the state space for the rewiring of $|X|$ edges is smaller, namely, size $2^{|X|}\prod_{i=0}^{|X|-1} (|L_t| - |L_t \cap K_t| - i)$, due to the removal of already drawn edges.

While we have seen that the transition matrix is stochastic, it is doubly stochastic only subject to additional conditions. For the purpose of this paper we need the following fact:

\begin{proposition}[Double stochasticity of $(K_t)$-to-global rewiring transition matrix]
\label{prop:globuniform}
The transition matrix given in \eqref{eq:gentransmatrix} is doubly stochastic for $L_t \equiv \xi$, in the sense that edges in $L_t$ are generated by pairing $\xi$ of the whole set of half-edges $H$ (recall Remark~\ref{rem:edgeset}).
\end{proposition}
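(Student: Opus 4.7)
The plan is to establish double stochasticity by verifying $\sum_{\eta \in \Conf_H} Q_{K_t \to L_t}(\eta, \xi) = 1$ for every $\xi \in \Conf_H$; row-stochasticity has already been established via the binomial identity in the paragraph preceding the proposition statement.

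The first step is to isolate the diagonal contribution. Any rewiring with $k \geq 1$ strictly changes the configuration: the newly formed edges mix $R_t$-half-edges with $R'_t$-half-edges, which is incompatible with the old pairings of $R_t$-half-edges and $R'_t$-half-edges among themselves. Hence the diagonal $Q_{K_t \to L_t}(\xi, \xi)$ comes only from the $k=0$ term in \eqref{eq:gentransmatrix}, giving $(1-\alpha_n)^{|K_t(\xi)|}$. What remains is to verify
\begin{equation*}
\sum_{\eta \neq \xi} Q_{K_t \to L_t}(\eta, \xi) = 1 - (1-\alpha_n)^{|K_t(\xi)|}.
\end{equation*}

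The main step is a measure-preserving bijection between incoming rewire transitions to $\xi$ and outgoing rewire transitions from $\xi$. I would encode a non-trivial transition $\eta \to \xi$ by a triple $(X, Y, \pi)$ with $X \subseteq K_t(\eta)$ of size $k \geq 1$, $Y \subseteq L_t(\eta) \setminus X$ of size $k$, and $\pi$ the bijection between $X$-half-edges and $Y$-half-edges producing the re-pairing. The $2k$ new edges of $\xi$ are the pairs $\{h, \pi(h)\}$ ranging over $X$-half-edges; since $L_t$ is the entire current configuration, these all lie in $L_t(\xi)$. Define the reverse triple $(X^{\ast}, Y^{\ast}, \pi^{\ast})$ at $\xi$ to have $X^{\ast} \cup Y^{\ast}$ equal to the set of $2k$ new edges and $\pi^{\ast} = \pi^{-1}$ recovering the original edges of $X \cup Y$ in $\eta$; this gives the desired bijection. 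The forward probability $\alpha_n^k (1-\alpha_n)^{|K_t(\eta)|-k} \big/ \bigl(2^k \prod_{i=0}^{k-1}(|L_t(\eta)|-|K_t(\eta)|-i)\bigr)$ and the reverse probability (with $\eta$ replaced by $\xi$) are matched by noting that rewiring preserves $|L_t|=|H|/2$ and that $L_t \cap K_t = K_t$ under the to-global hypothesis; summing the matched probabilities yields $\sum_{\eta \neq \xi} Q(\eta,\xi) = 1-(1-\alpha_n)^{|K_t(\xi)|}$ via the row-stochasticity identity applied at $\xi$.

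The main obstacle I foresee is the possible mismatch between $|K_t(\eta)|$ and $|K_t(\xi)|$ when $K_t$ is position- and structure-dependent, as in near-to-global rewiring where $K_t$ is the $r_n$-neighbourhood of the walker and can change size after the rewiring. For local-to-global ($|K_t|\equiv 1$) and global-to-global ($K_t \equiv L_t$) this matching is automatic, so there is nothing further to check. In the general case, the bijection must be refined so that its fibres are organised by the joint pattern $(|K_t(\eta)|, |K_t(\xi)|)$ before matching the Bernoulli prefactors; the combinatorial identity implicit in \eqref{eq:gentransmatrix} then does the work of balancing the sum. I expect most of the technical effort to be concentrated in this refinement.
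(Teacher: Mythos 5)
Your reduction of the problem to showing $\sum_{\eta\neq\xi}Q_{K_t\to L_t}(\eta,\xi)=1-(1-\alpha_n)^{|K_t(\xi)|}$ is fine (and the observation that the diagonal receives only the $k=0$ term is correct), but the central step --- the weight-preserving bijection sending an incoming transition $\eta\to\xi$, encoded by $(X,Y,\pi)$, to the reverse outgoing transition $\xi\to\eta$ --- does not exist in general, and this is exactly where the proposition has content. If such a bijection existed and matched weights termwise, it would give $Q(\eta,\xi)=Q(\xi,\eta)$, i.e.\ symmetry of the transition matrix; the paper explicitly warns (remark on symmetry in Section~\ref{sec:locrrw}) that symmetry holds for local-to-global rewiring but \emph{fails} in general within the class of something-to-global rewirings, double stochasticity being the weaker property that survives. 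Concretely, for near-to-global rewiring the reverse triple need not even be admissible: to return from $\xi$ to $\eta$ one must break up exactly the $2k$ new edges, so at least $k$ of them must lie in $K_t(\xi)=\nearset{\xi}{X_{t-1}}{r_n}$, but the rewiring $\eta\to\xi$ can eject edges from the walker's $r_n$-ball or destroy the internal paths that made them near, so membership of the new edges in $K_t(\xi)$ is not guaranteed; and even when the reverse move is admissible, its weight carries the prefactor $(1-\alpha_n)^{|K_t(\xi)|-k}$ while the forward weight carries $(1-\alpha_n)^{|K_t(\eta)|-k}$, and $|K_t(\eta)|\neq|K_t(\xi)|$ in general because the rewiring changes the ball. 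Your final paragraph acknowledges the cardinality mismatch but merely defers it (``the bijection must be refined\dots''), and the two cases you do verify (local-to-global and global-to-global) are the easy ones; the near-to-global case, for which Proposition~\ref{prop:globuniform} is actually invoked in Section~\ref{sec:nearrrw}, is precisely the deferred one. So as it stands the argument has a genuine gap at its key step.

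The paper's proof takes a route that never pairs incoming with outgoing moves and hence needs no reversibility: fix the target $\xi$ and, for each $k$, count directly the source configurations from which $\xi$ is reachable by rewiring $k$ edges of $K_t$. The number of admissible choices of the partner half-edges, $\prod_{i=0}^{k-1}\left(|H|-2|K_t|-2i\right)$, exactly cancels the denominator $2^{k}\prod_{i=0}^{k-1}\left(|L_t|-|L_t\cap K_t|-i\right)$ appearing in \eqref{eq:gentransmatrix}, precisely because $L_t$ is the full edge set, so that $2|L_t|=|H|$ and $L_t\cap K_t=K_t$; the column sum then collapses to $\sum_{k}\binom{|K_t|}{k}(1-\alpha_n)^{|K_t|-k}\alpha_n^{k}=1$ by the binomial theorem. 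This is where the ``to-global'' hypothesis enters. To repair your argument you would have to replace the termwise bijection by this kind of aggregated counting, which is essentially the paper's proof.
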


\begin{proof}
The proof is by explicit computation. Choose an arbitrary graph state $\xi$ and count the contributions to the sum over the row corresponding to transitions leading to $\xi$:
\begin{equation}
\sum \limits_{\xi \in \Conf_H}Q_{K_t\rightarrow L_t} (\eta,\xi) 
= \sum_{k=0}^{|K_t|} (1-\alpha_n)^{|K_t|-k} \alpha_n^k \binom{|K_t|}{k} \frac{\prod \limits_{i=0}^{k-1} (|H| - 2|K_t| - 2i)}
{2^{k}\prod \limits_{i=0}^{k-1} (|L_t| - |L_t \cap K_t| - i)}.
\end{equation}
The term $(|H| - 1 - (2|K_t|-1) - 2i)$ is based on the following observation. We are counting possible pairs of half-edges where we see a difference in $\xi$ compared to $\eta$. This way we get the whole set of half-edges $|H|$, without the considered half-edge itself and without all but one half-edge in $K_t$. Rewiring cannot create an edge between half-edges that gave rise to $K_t$, and the term $-1$ arises from the one half-edge from $K_t$ the considered half-edge is paired with in $\xi$. The term $-2i$ again arises because we are drawing without replacement. Now observe that $2|L_t| = |H|$ and $L_t \cap K_t = K_t$. Apply the binomial theorem to get the claim.
\end{proof}


\bibliographystyle{abbrv}
\bibliography{references}{}


\end{document}